\pgfplotsset{compat=1.10}
\definecolor{light-gray}{gray}{0.6}
\definecolor{really-light-gray}{gray}{0.8}
\newcommand{\mres}{\mathbin{\vrule height 1.6ex depth 0pt width  %new
0.13ex\vrule height 0.13ex depth 0pt width 1.3ex}}    %new
\def\big{\bigskip}
\newtheorem{theorem}{Theorem}[section]
\newtheorem{proposition}[theorem]{Proposition}
\newtheorem{lemma}[theorem]{Lemma}
\newtheorem{corollary}[theorem]{Corollary}
\numberwithin{equation}{section}
\numberwithin{figure}{section}
\begin{document}

\title{Rigidity for the perimeter inequality under Schwarz symmetrisation}

\author{Georgios Domazakis}
%\\ ------ \\  Revised version on \today}
\date{\today}
\address{Department of Mathematics, University of Sussex, Pevensey 2, BN1 9QH, Brighton, UK}%
\email{g.domazakis@sussex.ac.uk}
\maketitle
\begin{abstract}
In this paper, we give necessary and sufficient conditions for the rigidity of perimeter inequality under Schwarz symmetrisation. The term \textit{rigidity} refers to the situation in which the equality cases are only obtained by translations of the symmetric set. In particular, we prove that  the sufficient conditions for rigidity provided in \cite{fusco2013stability}, are also  necessary. 

\end{abstract}
%--------------------------

\section{Introduction}

Symmetrisation procedures have a wide range of applications in modern analysis, geometric variational problems and optimisation. Understanding the behaviour of functional and perimeter inequalities under symmetrisation  
 allows to prove the existence of symmetric minimisers of geometric variational problems, and to provide comparison principles for solutions of PDEs (see, for instance \cite{kawohl_rearrangements_1985,kawohl1986isoperimetric,Talenti1986,trombetti1997convex} and the references therein).

Examples of set symmetrisations under which the volume is preserved and the perimeter  does not increase include Steiner symmetrisation, Ehrhard symmetrisation, circular and spherical symmetrisation. We say that rigidity holds for a perimeter inequality if the set of extremals is trivial. Showing rigidity can lead to proving the uniqueness of minimisers of variational problems.
For example, proving the rigidity of  Steiner's inequality for convex sets was substantial in the celebrated proof  of the Euclidean isoperimetric inequality by De Giorgi  (see, \cite{de1958sulla,de2007selected}).

Later on, the study of rigidity was revived in the seminal paper of Chleb\'ik, Cianchi and Fusco (see \cite{chlebik2005perimeter}), where the authors gave the sufficient conditions for rigidity of Steiner's inequality, also for sets that are not convex. 
Henceforth, necessary and sufficient conditions for rigidity for Steiner's inequality have been obtained in 
\cite{cagnetti2014rigidity} in the case where the distribution function is a Special function of Bounded Variation with locally finite jump set. In the Gauss space,  necessary and sufficient conditions for rigidity of Ehrhard's  inequality are given in \cite{cagnetti2017essential}. In the last two papers, the results are stated in terms of \textit{essential connectedness}.  For an expository article of the aforementioned rigidity results, we refer to 
\cite{cagnetti2019rigidity}. In \cite{cagnetti2020rigidity}, the authors provided the necessary and sufficient conditions for rigidity for perimeter inequality under spherical symmetrisation, while in \cite{perugini2021rigidity}, sufficient conditions for rigidity have been given for the anisotropic Steiner's perimeter inequality.  We further point out that, regarding the smooth case, the authors in \cite{morgan2011steiner} proved sufficient conditions for rigidity  of perimeter inequality  in \textit{warped products}, for  a wide class of symmetrisations, including Steiner, Schwarz and spherical symmetrisation. 

\nocite{cianchi2002functions}
\nocite{serrin1971symmetry,polya1951isoperimetric,morgan2013existence,kawohl1986isoperimetric,gidas1979symmetry}

 The literature about Steiner's perimeter inequality of a higher codimension is less explored. Particularly, sufficient conditions for rigidity for any codimension have been provided in \cite{fusco2013stability}, through a comprehensive analysis of the barycenter function. 
  The problem of a complete characterisation (that is, necessary \textit{and} sufficient conditions) for the rigidity of generic higher codimensions, however, remains  open.

A special case of interest is where the codimension is equal to $(n-1)$. In this case, Steiner's symmetrisation of codimension $(n-1)$ is usually referred to as \textit{Schwarz symmetrisation}.

 The purpose of this paper is to provide necessary and sufficient conditions for rigidity of equality cases for the perimeter inequality under Schwarz symmetrisation.
 In particular, we prove that the sufficient conditions for rigidity shown in  \cite{fusco2013stability} are also  necessary. Our results are established by following techniques developed in \cite{cagnetti2020rigidity}.

 In the remainder of this introductory section, we recall the setting of the problem, and we state our main results.

\subsection{Schwarz symmetrisation}
For $n \geq 2$  with $n \in \mathbb{N}$, we label each point $x \in \mathbb{R}^{n}$  as   $x= (z,w)$, where $z \in \mathbb{R}$ and $w \in \mathbb{R}^{n-1}$.  

Given a measurable  set $E \subset \mathbb{R}^{n}$ and $z \in \mathbb{R}$, we define the \textit{$(n-1)$-dimensional  slice} of $E$ at $z$ as 
\begin{equation} \label{slice}
E_{z} := \{ w \in \mathbb{R}^{n-1}: (z,w) \in E \}.
\end{equation}

For  a Lebesgue measurable function $\ell : \mathbb{R} \rightarrow [0, \infty),$ we say that the set $E$ is $\ell$-\textit{distributed} if 
\begin{equation}\label{elldistributed}
\ell(z) = \mathcal{H}^{n-1} (E_{z}) \quad \mbox{ for } \mathcal{H}^{1} \mbox{-a.e. } z \in \mathbb{R},
\end{equation}
where $\mathcal{H}^{n-1}$ denotes the $(n-1)$-dimensional Hausdorff measure in $\mathbb{R}^{n}$.

We can associate to $\ell$ the function $r_{\ell} : \mathbb{R} \rightarrow [0,\infty)$, which is such that 

\[
\ell(z) = \mathcal{H}^{n-1} \left( B^{n-1} (0 , r_{\ell} (z) \right),  \quad \mbox{ for } \mathcal{H}^{1} \mbox{-a.e. } z \in \mathbb{R},
\]
where $B^{n-1} (w, \rho)$ denotes the open ball in $\mathbb{R}^{n-1}$ with radius $\rho$ and centered at $w \in \mathbb{R}^{n-1}$.

Note that $r_{\ell}(z)$ is the radius of an $(n-1)$-dimensional ball whose measure is $\ell(z)$, and can  be explicitly written as

\begin{equation} \label{aktina}
r_{\ell}(z)=\left(\frac{\ell(z)}{\omega_{n-1}}\right)^{\frac{1}{n-1}} \quad \mbox{ for } \mathcal{H}^{1} \mbox{-a.e. } z \in \mathbb{R},
\end{equation}
where we set $\omega_{n-1}:= \mathcal{H}^{n-1}(B^{n-1}(0,1))$.

If $E \subset \mathbb{R}^{n}$ is $\ell$-distributed, then the \textit{Schwarz symmetric} set $F_{\ell}$ of $E$ with respect to the axis $\{ w= 0 \}$ is defined as 
\begin{equation}\label{definition_of_Fell}
F_{\ell}:= \left\{ x= (z,w) \in \mathbb{R} \times \mathbb{R}^{n-1}: |w| < r_{\ell} (z) \right\};
\end{equation}
see Figure \ref{symmetralfig_label}.

This is the $\ell$-distributed set whose cross sections are $(n-1)$-dimensional open balls centred at the $z$ axis. We notice that the Schwarz symmetric set $F_{\ell}$ of an $\ell$-distributed set $E$ depends only on the function $\ell$, and not  on the particular $\ell$-distributed set $E$ under consideration.
%axis prespective
\tdplotsetmaincoords{85}{113}
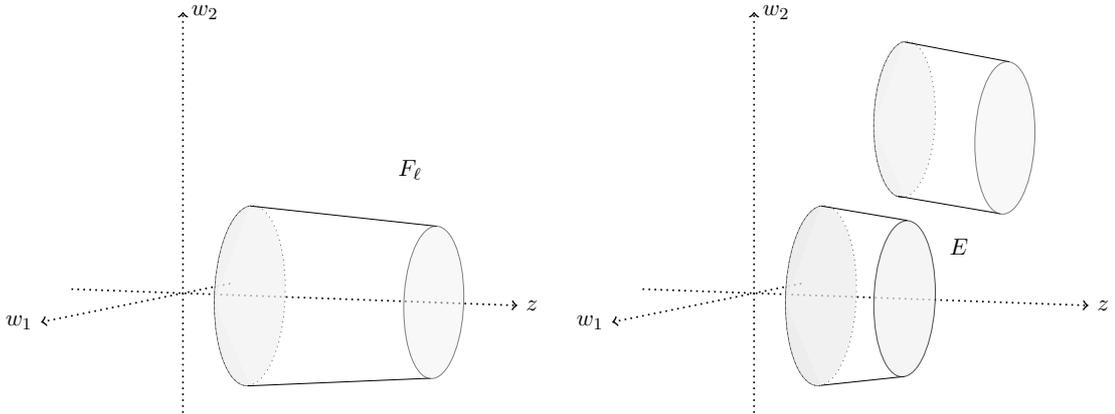
\begin{figure}[t]
\centering
\resizebox{1\textwidth}{!}{

\subfigure
    {%	
	\begin{tikzpicture}
		%--------  initialisation settings
		[tdplot_main_coords,
		cube/.style={very thin,black},
		grid/.style={very thin,gray },
		axis/.style={->,black,thick, dotted}]
		
		%------------------
		% draw and label 3d axes
		\draw[axis] (0,-2,0) -- (0,6,0) node[anchor=west]{$z$};
		\draw[axis] (-2,0,0) -- (6,0,0)   node[anchor=east]{$w_{1}$};
		\draw[axis] (0,0,-2) -- (0,0,4.7) node[anchor=west]{$w_{2}$};
		%-------------------
		
		%--------draw first circle at y=1.2
		\begin{scope}[canvas is xz plane at y=1.2]

			\draw[fill=gray!10, opacity=0.7](1.5,0) arc[x radius=1.5, y radius=1.5, start angle=0, end angle=90];
			
			\draw[fill=gray!10, opacity=0.7] (1.5,0) arc[x radius=1.5, y radius=1.5, start angle=0, end angle=-90];
			
			\draw [dotted,fill=gray!10, opacity=0.7] (1.5,0) arc[x radius=1.5, y radius=1.5, start angle=0, end angle=180];
			
			\draw [dotted,fill=gray!10, opacity=0.7] (1.5,0) arc[x radius=1.5, y radius=1.5, start angle=0, end angle=-180];
			
		\end{scope}
        %-------------------
        
	    %--------draw the F_ell node 
		\node at (1,4.5,2.3) {$F_{\ell}$}; % 1st: z , 2nd: t1 axis, 3rd: t2 axis 
	    %-------------------
		
        %--------draw the connections between
	    \draw[-](0,1.2,1.5)-- 	(0,4.555,1.275);
        \draw[-](0,1.16,-1.51)-- 	(0,4.44,-1.27);
        %-------------------
		
		%--------draw last circle at y=4.5
		\begin{scope}[canvas is xz plane at y=4.5]
			
			\draw[fill=gray!10, opacity=0.5]  (0,0) circle (1.27);
			
		\end{scope}	
	     %-------------------
	\end{tikzpicture}
	}
\subfigure
    {
	\begin{tikzpicture}
		[tdplot_main_coords,
		cube/.style={very thin,black},
		grid/.style={very thin,gray },
		axis/.style={->,black,thick, dotted}]
		
		%------------------

		%--------draw the axes

		\draw[axis] (0,-2,0) -- (0,6,0) node[anchor=west]{$z$};
		\draw[axis] (-2,0,0) -- (6,0,0)   node[anchor=east]{$w_{1}$};
		\draw[axis] (0,0,-2) -- (0,0,4.7) node[anchor=west]{$w_{2}$};
		
		%-------------------

		%--------draw first  circle at y=1.2
		\begin{scope}[canvas is xz plane at y=1.2]
			
			\draw[fill=gray!15, opacity=0.7](1.5,0) arc[x radius=1.5, y radius=1.5, start angle=0, end angle=90];
			\draw[fill=gray!15, opacity=0.7] (1.5,0) arc[x radius=1.5, y radius=1.5, start angle=0, end angle=-90];
			\draw [dotted,fill=gray!15, opacity=0.7] (1.5,0) arc[x radius=1.5, y radius=1.5, start angle=0, end angle=180];
			\draw [dotted,fill=gray!15, opacity=0.7] (1.5,0) arc[x radius=1.5, y radius=1.5, start angle=0, end angle=-180];
			
		\end{scope}
		
		%-------------------
		
		%--------draw the E node 
		%\node at (1,5,3) {$E$}; % 1st: z , 2nd: t1 axis, 3rd: t2 axis 
		\node at (1,4.1,1) {$E$}; % 1st: z , 2nd: t1 axis, 3rd: t2 axis 
	    %------------------
	    
		%--------connection between 
		
		\draw[-](0,1.2,1.5)-- (0,2.75,1.31);
		\draw[-](0,1.2,-1.5)-- (0,2.7,-1.3);
		%-------------------

		%--------draw second circle at y=2.7
		\begin{scope}[canvas is xz plane at y=2.7]
			
			\draw[name path=circ1, fill=gray!10, opacity=0.7] (0,0) circle (1.3);

		\end{scope}
		
		%-------------------
		
		%-------- draw second small circle at y=2.7
		\begin{scope}[canvas is xz plane at y=2.7]

			\draw[fill=gray!10, opacity=0.7 ](1.3,3) arc[x radius=1.3, y radius=1.3, start angle=0, end angle=90];
			\draw[fill=gray!10, opacity=0.7](1.3,3) arc[x radius=1.3, y radius=1.3, start angle=0, end angle=-90];
			\draw [dotted,	fill=gray!10, opacity=0.7] (1.3,3) arc[x radius=1.3, y radius=1.3, start angle=0, end angle=180];
			\draw [dotted,fill=gray!10, opacity=0.7](1.3,3) arc[x radius=1.3, y radius=1.3, start angle=0, end angle=-180];
		\end{scope}
		%-------------------

		%--------draw connection between big second and last circle
		
		\draw[] (0,2.7,3+1.292)-- (0,4.58,1.325+2.7);
		\draw[] (0,2.58,3-1.29)-- (0,4.42,-1.22+2.7);
		%-------------------
		
		%--------draw last circle
		\begin{scope}[canvas is xz plane at y=4.5]
			
			\draw[fill=gray!10, opacity=0.5]  (0,2.75) circle (1.27);
			
		\end{scope}	
		%-------------------
	\end{tikzpicture}
    }
}

\caption{The symmetric set $F_{\ell}$ (left) of an $\ell$-distrubuted set $E$ (right) in case of $n=3$. Note that, in general the slices of the set $E$ do not need to be disks.
} 
\label{symmetralfig_label}
\end{figure}

 Due to Fubini's theorem, Schwarz symmetrisation preserves the volume, i.e. if $E$ is $\ell$-distributed and $\mathcal{H}^{n} (E) < \infty$, it turns out that $\mathcal{H}^{n} (E) =\mathcal{H}^{n}(F_{\ell}).$ 
 Moreover, the perimeter inequality under Schwarz symmetrisation holds,  that is 
\begin{equation} \label{perimeter inequality}
P( F_{\ell} ) \leq P( E ) \mbox{ for every $\ell$-distributed set } E \subset \mathbb{R}^{n}.
\end{equation}
Here, $P(E)$ stands for the perimeter of $E$ in $\mathbb{R}^{n}$ (see Section \ref{setsofFP}).

The inequality \eqref{perimeter inequality} is well-known in the literature (see, for instance, \cite{brock2000approach}, where this is proved through a careful approximation by polarisations). In \cite{fusco2013stability}, one can find an alternative and direct proof, which allowed the authors to give sufficient conditions for rigidity  of  Steiner's inequality of a general higher codimension $k$, where $1< k \leq n-1$.

\subsection{Rigidity for perimeter inequality under Schwarz symmetrisation}
We shall now describe the main objective of the present paper. Given a Lebesgue measurable function $\ell: \mathbb{R} \rightarrow [0,\infty)$, such that $F_{\ell}$ is a set of finite perimeter and finite volume, we define the class of equality cases of \eqref{perimeter inequality} as

\begin{equation} \label{extremals class}
\mathcal{K}(\ell) = \{ E \subset \mathbb{R}^{n} : E \mbox{ is $\ell$-distributed and } P (F_{\ell}) = P(E) \}.
\end{equation}  \\ 
Due to  the invariance of the perimeter under translations along a direction $\tau \in \mathbb{R}^{n-1}$, as well as the definition of the  symmetric set  $F_{\ell}$, the following inclusion is always true:

\begin{equation}\label{class inclusion}
\mathcal{K}(\ell) \supset \{E \subset \mathbb{R}^{n} : \mathcal{H}^{n} ( E \triangle ( F_\ell + (0,\tau))) = 0 \mbox{ for some } \tau \in \mathbb{R}^{n-1}\},
\end{equation} \\ 
where $\triangle$ denotes the symmetric difference of sets.
We say that \textit{rigidity} holds  for \eqref{perimeter inequality} if  the opposite inclusion is also satisfied, i.e.

\begin{equation}\label{class rigidity}
\mathcal{K}(\ell) = \{E \subset \mathbb{R}^{n} :  \mathcal{H}^{n} ( E \triangle ( F_\ell + (0,\tau)) = 0 \mbox{ for some } \tau\in \mathbb{R}^{n-1}\}. \tag{$\mathcal{R}\mathcal{S}$}
\end{equation} 

\subsection{State of the art}
Let us now give an account of the available results in the literature for the rigidity of \eqref{perimeter inequality}. 
In general, not all equality cases of \eqref{perimeter inequality} can be written as a translation of the symmetric set $F_{\ell}$. This can happen, for instance, if the (reduced) boundary  $\partial^{*} F_{\ell}$ of $F_{\ell}$ contains flat vertical parts. In such a case, we can find an $\ell$-distributed set $E$ which preserves perimeter under symmetrisation, and it is not equivalent to (a translation of) the symmetric set 
$F_{\ell}$; see Figure \ref{ell jump counter fig_label}. 
\tdplotsetmaincoords{85}{113}

\begin{figure}[h]
\centering
\resizebox{1.12\textwidth}{!}
{
\subfigure
    {
     \begin{tikzpicture}
	    [tdplot_main_coords,
	    cube/.style={very thin,black},
	    grid/.style={very thin,gray },
	    axis/.style={->,black,thick, dotted}]
        %------------------
        
        % draw and label 3d axes

        \draw[axis] (0,-2,0) -- (0,6,0) node[anchor=west]{$z$}; 
        \draw[axis] (-2,0,0) -- (6,0,0)   node[anchor=east]{$w_{1}$};
        \draw[axis] (0,0,-2) -- (0,0,4) node[anchor=west]{$w_{2}$};
        %-------------------

        %draw first circle
        \begin{scope}[canvas is xz plane at y=1.2]
        
        	\draw[fill=gray!15, opacity=0.7](1.5,0) arc[x radius=1.5, y radius=1.5, start angle=0, end angle=90];
	
	        \draw[fill=gray!15, opacity=0.7] (1.5,0) arc[x radius=1.5, y radius=1.5, start angle=0, end angle=-90];
	        
	        \draw [dotted,fill=gray!15, opacity=0.7] (1.5,0) arc[x radius=1.5, y radius=1.5, start angle=0, end angle=180];
	
	        \draw [dotted,fill=gray!15, opacity=0.7] (1.5,0) arc[x radius=1.5, y radius=1.5, start angle=0, end angle=-180];
        \end{scope}
         %-------------------
        %connection between first and secong big circle 
        
        \draw[-](0,1.2,1.5)-- (0,2.7,1.3);
        \draw[-](0,1.2,-1.5)-- (0,2.7,-1.3);

        % draw second  big circle
        \begin{scope}[canvas is xz plane at y=2.7]
	        
	        \draw[name path=circ1, fill=gray!10, opacity=0.7] (0,0) circle (1.3);

	     ;
        \end{scope}
         %-------------------

        % draw second small circle, zeta node, zeta poind and F_ell node
        \begin{scope}[canvas is xz plane at y=2.7]

	        \draw[fill=white, opacity=0.7 ](0.8,0) arc[x radius=0.8, y radius=0.45, start angle=0, end angle=90];
	        
	        \draw[fill=white, opacity=0.7](0.8,0) arc[x radius=0.8, y radius=0.45, start angle=0, end angle=-90];
	        
	        \draw [dotted,fill=white, opacity=0.7] (0.8,0) arc[x radius=0.8, y radius=0.45, start angle=0, end angle=180];
	    
	        \draw [dotted,fill=white, opacity=0.7](0.8,0) arc[x radius=0.8, y radius=0.45, start angle=0, end angle=-180];

        	\draw  node[fill,circle,scale=0.2]{} (0,1);
        	
		    \node at (0,-0.2) {$\tilde{z}$} ;

        \end{scope}
        %-------------------
        
        % draw F_ell node
        
        \node at (2,3.5,2.5) {$F_{\ell}$};

        % draw connections between small and last circle
        
        \draw[] (0,2.7,0.45)-- (0,4,0.8);
        
        \draw[] (0,2.565,-0.45)-- (0,3.93,-0.8);
         %-------------------

        % draw last circle
        \begin{scope}[canvas is xz plane at y=4]
	        
	        \draw[fill=gray!10, opacity=0.5]  (0,0) circle (0.8);
	
        \end{scope}	    
         %-------------------
\end{tikzpicture}

    }

\subfigure
    {
    \begin{tikzpicture}
    %initial settings
	[tdplot_main_coords,
	cube/.style={very thin,black},
	grid/.style={very thin,gray },
	axis/.style={->,black,thick, dotted}]
	%------------------
	
	% draw and label 3d axes
	
	\draw[axis] (0,-2,0) -- (0,6,0) node[anchor=west]{$z$};
	\draw[axis] (-2,0,0) -- (6,0,0)   node[anchor=east]{$w_{1}$};
	\draw[axis] (0,0,-2) -- (0,0,4) node[anchor=west]{$w_{2}$};
	%-------------------

	    %draw first circle
	    \begin{scope}[canvas is xz plane at y=1.2]

		    \draw[fill=gray!15, opacity=0.7](1.5,0) arc[x radius=1.5, y radius=1.5, start angle=0, end angle=90];
		  
		    \draw[fill=gray!15, opacity=0.7] (1.5,0) arc[x radius=1.5, y radius=1.5, start angle=0, end angle=-90];
		    
		    \draw [dotted,fill=gray!15, opacity=0.7] (1.5,0) arc[x radius=1.5, y radius=1.5, start angle=0, end angle=180];
		
		    \draw [dotted,fill=gray!15, opacity=0.7] (1.5,0) arc[x radius=1.5, y radius=1.5, start angle=0, end angle=-180];
		    
	    \end{scope}
	    %-------------------
	   
	    %connection between first and second big circle 
	    \draw[-](0,1.2,1.5)-- (0,2.7,1.3);
	   
	    \draw[-](0,1.2,-1.5)-- (0,2.7,-1.3);

	% draw second  big circle, dot point and zeta point and set E node
    	\begin{scope}[canvas is xz plane at y=2.7]
	
		    \draw[name path=circ1, fill=gray!10, opacity=0.7] (0,0) circle (1.3);
		
	    	\draw  node[fill,circle,scale=0.2]{} (0,1);
		   
		    \node at (.3,2.2,5.2) {$E$} ;
		
    	\end{scope}
	%-------------------

	    % draw second small circle
	       \begin{scope}[canvas is xz plane at y=2.7]
	
	    	\draw[fill=gray!10, opacity=0.7 ](0.8,0.75,0.45) arc[x radius=0.8, y radius=0.45, start angle=0, end angle=90];
	    	
		    \draw[fill=gray!10, opacity=0.7](0.8,0.75,0.45) arc[x radius=0.8, y radius=0.45, start angle=0, end angle=-90];
		    
		    \draw [dotted,	fill=gray!10, opacity=0.7] (0.8,0.75,0.45) arc[x radius=0.8, y radius=0.45, start angle=0, end angle=180];
		    
		    \draw [dotted,fill=gray!10, opacity=0.7](0.8,0.75,0.45) arc[x radius=0.8, y radius=0.45, start angle=0, end angle=-180];
		
	    \end{scope}
	     %-------------------

	% draw connection between second small and last circle
 
       \draw[] (0,2.711,1.2)-- (0,4.01,1.552);
        
        \draw[] (0,2.58,0.291)-- (0,3.95,-0.05);
	 %-------------------

        	% draw last circle

 \begin{scope}[canvas is xz plane at y=4]
	        
	        \draw[fill=gray!10, opacity=0.5]  (0,0.75,0) circle (0.8);
	
        \end{scope}	
    \begin{scope}[canvas is xz plane at y=2.7]

	    	\draw  node[fill,circle,scale=0.2]{} (0,1);
		    \node at (0,-0.2) {$\tilde{z}$} ;

    	\end{scope}     
	    %-------------------
    \end{tikzpicture}
    }

}

\caption{Rigidity \eqref{class rigidity} fails, since the (reduced) boundary $\partial^{*} F_{\ell}$ of $F_{\ell}$ has a non-negligible flat vertical part, thus violating \eqref{no flat parts symmetric}. Note that the function $\ell$ is discontinuous at $\tilde{z}$, so that also \eqref{sobolev_assumption1} is violated.}
\label{ell jump counter fig_label}

\end{figure}
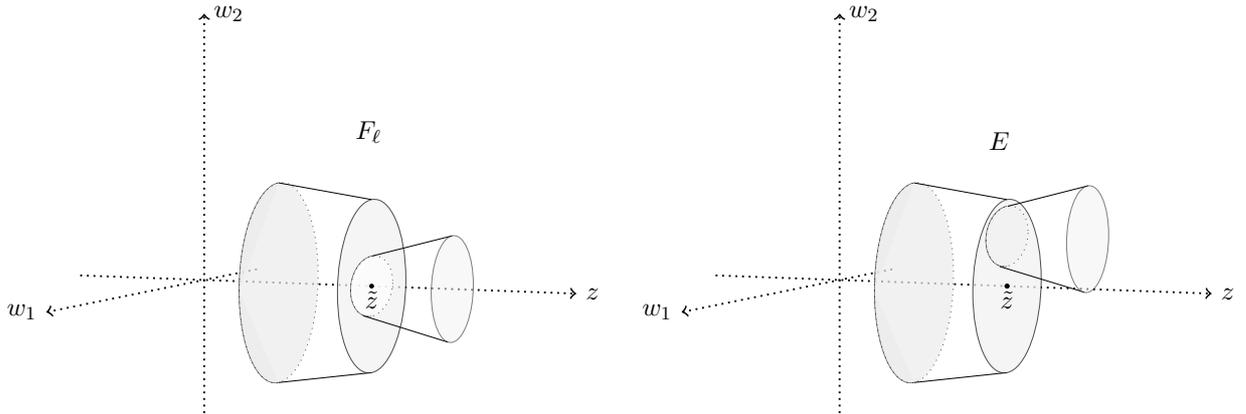

In order to rule out this issue,  the authors in \cite{fusco2013stability} localised the problem, by considering an open set $\Omega \subset \mathbb{R}$, and imposing the following condition:

\begin{equation} \label{no flat parts symmetric}
\mathcal{H}^{n-1} ( \{ (z,w) \in \partial^{*} F_{\ell} : \nu_{w}^{F_{\ell}} (z,w) = 0 \} \cap (\Omega \times \mathbb{R}^{n-1} ) ) =0,
\end{equation}  \\
where $\nu_{w}^{F_{\ell}}(z,w)$ denotes the $w$-component of the measure-theoretic outer unit normal to the symmetric set $F_{\ell}.$ It turns out that \eqref{no flat parts symmetric} is related to the regularity of the function $\ell$. Note that, in general, if $E$ is a set of finite perimeter in $\mathbb{R}^{n}$, then either $F_{\ell}$ is equivalent to $\mathbb{R}^{n}$, or $\ell$ is a function of Bounded Variation in $\mathbb{R}$ (see Proposition \ref{ell_regular}).

In \cite[Proposition~3.5]{fusco2013stability}, the authors showed that \eqref{no flat parts symmetric} is equivalent to asking that $\ell$ is a Sobolev function in $\Omega$, as explained below.

\begin{proposition} \label{fuscoreg}
Let $\ell: \mathbb{R}\rightarrow [0, \infty)$ be a measurable function, such that $F_{\ell}$ is a set of finite perimeter and finite volume in $\mathbb{R}^{n}$ and  let $\Omega \subset \mathbb{R}$ be an open set. Then 
\[
\mathcal{H}^{n-1} ( \{ (z,w) \in \partial^{*} F_{\ell} : \nu_{w}^{F_{\ell}} (z,w) = 0 \} \cap (\Omega \times \mathbb{R}^{n-1} ) ) =0
\]
if and only if 
\begin{equation} \label{sobolev_assumption1}\ell \in W^{1,1} (\Omega).
\end{equation}
\end{proposition}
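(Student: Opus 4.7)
The plan is to identify the $\mathcal{H}^{n-1}$-measure of the ``vertical part''
\[
V := \{(z,w)\in\partial^* F_\ell : \nu_w^{F_\ell}(z,w)=0\}\cap(\Omega\times\mathbb{R}^{n-1})
\]
with $|D^s\ell|(\Omega)$, the total variation of the singular part of $D\ell$; since $\ell\in BV_{\mathrm{loc}}(\mathbb{R})$ by Proposition~\ref{ell_regular}, the condition $\ell\in W^{1,1}(\Omega)$ is equivalent to $|D^s\ell|(\Omega)=0$ in the Radon--Nikodym decomposition $D\ell=\nabla\ell\,dz+D^s\ell$, and the conclusion will follow. As a preliminary observation, the $BV$ chain rule applied to the locally Lipschitz map $t\mapsto \omega_{n-1}t^{n-1}$ gives $r_\ell\in BV_{\mathrm{loc}}(\mathbb{R})$ together with the almost-everywhere identity $\nabla\ell=(n-1)\omega_{n-1}\,r_\ell^{n-2}\,\nabla r_\ell$ on $\Omega$.

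Next, I would apply the divergence theorem to the vector field $\Phi(z,w):=(\varphi(z),0)\in\mathbb{R}\times\mathbb{R}^{n-1}$ for arbitrary $\varphi\in C_c^1(\Omega)$, obtaining
\[
\int_\Omega\varphi'(z)\,\ell(z)\,dz = \int_{\partial^* F_\ell}\varphi(z)\,\nu_z^{F_\ell}(z,w)\,d\mathcal{H}^{n-1},
\]
which identifies $D\ell$ (up to sign) with the pushforward of $\nu_z^{F_\ell}\,\mathcal{H}^{n-1}\lfloor\partial^* F_\ell$ under the projection $(z,w)\mapsto z$. At this point the rotational symmetry of $F_\ell$ enters decisively: above each $z\in\Omega$, the map $w\mapsto\nu_z^{F_\ell}(z,w)$ has a single sign on the slice of $\partial^* F_\ell$ above $z$ --- it equals $-\sgn(r_\ell'(z))/\sqrt{1+r_\ell'(z)^2}$ on any graph piece and $\pm 1$ on any vertical annular piece arising from a jump or Cantor-type singularity of $r_\ell$. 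Hence no cancellation occurs in the pushforward and one obtains the sharp identity
\[
|D\ell|(\Omega)=\int_{\partial^* F_\ell\cap(\Omega\times\mathbb{R}^{n-1})}|\nu_z^{F_\ell}|\,d\mathcal{H}^{n-1}.
\]

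Splitting the right-hand side into the graph part $G:=\partial^* F_\ell\setminus V$ and the vertical part $V$, I would parametrise $G$ via $(z,\theta)\mapsto(z,r_\ell(z)\theta)$ with $\theta\in S^{n-2}$ and apply the area formula together with the chain rule above to get
\[
\int_G|\nu_z^{F_\ell}|\,d\mathcal{H}^{n-1} = (n-1)\omega_{n-1}\int_\Omega r_\ell^{n-2}\,|r_\ell'|\,dz = \int_\Omega|\nabla\ell|\,dz,
\]
while $|\nu_z^{F_\ell}|\equiv 1$ on $V$ gives $\int_V|\nu_z^{F_\ell}|\,d\mathcal{H}^{n-1}=\mathcal{H}^{n-1}(V)$. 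Comparing with $|D\ell|(\Omega)=\int_\Omega|\nabla\ell|\,dz+|D^s\ell|(\Omega)$ then delivers $\mathcal{H}^{n-1}(V)=|D^s\ell|(\Omega)$, as required.

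The hard part will be the ``no cancellation'' step and, relatedly, the rigorous treatment of a possible Cantor component of $D\ell$: while jump discontinuities of $\ell$ manifestly produce annular vertical portions of $\partial^* F_\ell$, one must also verify that a Cantor-type singularity contributes a genuine vertical set of positive $\mathcal{H}^{n-1}$-measure with no ``hidden'' graph contribution. This is best handled by applying Vol'pert's slicing theorem to the $BV$ function $\phi(z,w):=r_\ell(z)-|w|$, whose zero-level set recovers $\partial F_\ell$ up to $\mathcal{H}^{n-1}$-negligible sets.
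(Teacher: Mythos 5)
The paper does not prove Proposition~\ref{fuscoreg}; it is quoted verbatim from Fusco, Gelli and Pisante \cite{fusco2013stability}, Proposition~3.5, so there is no in-paper argument to compare yours against. Judging the sketch on its own merits: the reduction of the statement to the identity
\[
\mathcal{H}^{n-1}(V)=|D^{s}\ell|(\Omega),\qquad V:=\{(z,w)\in\partial^{*}F_\ell:\nu_w^{F_\ell}(z,w)=0\}\cap(\Omega\times\mathbb{R}^{n-1}),
\]
is exactly the right target (given $\ell\in BV_{\loc}$ from Proposition~\ref{ell_regular}), the divergence-theorem identity with $\Phi(z,w)=(\varphi(z),0)$ is correct, and the area-formula computation of the graph contribution $\int_{G}|\nu_z^{F_\ell}|\,d\mathcal{H}^{n-1}=\int_\Omega|\nabla\ell|\,dz$ checks out via the $BV$ chain rule for $\ell=\omega_{n-1}r_\ell^{n-1}$.

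The genuine gap is the step you yourself flag, and you should not underestimate it. The divergence theorem only gives you the signed identity $D\ell=-p_{\#}\bigl(\nu_z^{F_\ell}\,\mathcal{H}^{n-1}\mres\partial^{*}F_\ell\bigr)$ (where $p(z,w)=z$), and in general the total variation of a pushforward measure is strictly \emph{smaller} than the pushforward of the total variation. To promote this to $|D\ell|(\Omega)=\int_{\partial^{*}F_\ell\cap(\Omega\times\mathbb{R}^{n-1})}|\nu_z^{F_\ell}|\,d\mathcal{H}^{n-1}$ you must prove that the sign of $\nu_z^{F_\ell}$ is a function of $z$ alone on a full-measure subset of $\partial^{*}F_\ell$. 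This does hold, but the reason is the rotational invariance of $F_\ell$ in $w$, which you invoke only informally: for $\mathcal{H}^{1}$-a.e.\ $z$ the slice $(\partial^{*}F_\ell)_z$ is a sphere on which $\nu_z^{F_\ell}$ is constant, and for the at-most-countably-many jump points the slice is the annulus from Lemma~\ref{densitylemma}, on which $\nu_z^{F_\ell}\equiv\pm1$ with a single sign fixed by the jump direction. The Cantor part is still hand-waving: that a Cantor singularity of $\ell$ produces a subset of $V$ of matching $\mathcal{H}^{n-1}$-measure, with no extra graph contribution hidden in $G$, is precisely what needs to be shown, and your pointer to Vol'pert slicing of $\phi(z,w)=r_\ell(z)-|w|$ is a plausible direction but not yet a proof. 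A slightly leaner route is to start directly from Corollary~\ref{corollary_perimeter}, apply the coarea formula on $\partial^{*}F_\ell$ with $\phi(z,w)=z$ (this gives $\int_{\partial^{*}F_\ell}|\nu_w^{F_\ell}|\,d\mathcal{H}^{n-1}=\int_\Omega\mathcal{H}^{n-2}((\partial^{*}F_\ell)_z)\,dz$ and shows $V$ projects to an $\mathcal{H}^{1}$-null set), and then use rotational symmetry to compute $\mathcal{H}^{n-1}(G)=\int_\Omega\mathcal{H}^{n-2}((\partial^{*}F_\ell)_z)/|\nu_w^{F_\ell}(z)|\,dz$, reducing everything to the pointwise identity $|\nu_z^{F_\ell}(z)|\,\mathcal{H}^{n-2}((\partial^{*}F_\ell)_z)=|\nu_w^{F_\ell}(z)|\,|\nabla\ell(z)|$ for a.e.\ $z$. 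Either way, the rotational-symmetry / no-cancellation argument is the missing ingredient, and until it is written out the sketch is not a proof.
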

\vspace{0.2cm}

Even if condition \eqref{no flat parts symmetric} (or, equivalently, \eqref{sobolev_assumption1}) is satisfied, rigidity can still be violated.  In particular, this can  happen when  the symmetric set $F_{\ell}$ \textit{is not connected in a suitable measure-theoretic way}, despite the fact that it can be connected from a topological point of view; see Figure \ref{ell zero counter fig_label}.
\tdplotsetmaincoords{85}{113}
\begin{figure}[ht]
\centering
\resizebox{1\textwidth}{!}
{

\
\subfigure
	{%

    \begin{tikzpicture} % begin symmetric set F_ell
        %-------- initialisation settings
	    [tdplot_main_coords,
	    cube/.style={very thin,black},
	    grid/.style={very thin,gray },
	    axis/.style={->,black,thick, dotted}]
	
        %------------------
        % draw and label 3d axes
        \draw[axis] (0,-2,0) -- (0,7,0) node[anchor=west]{$z$};
        \draw[axis] (-2,0,0) -- (6,0,0)   node[anchor=east]{$w_{1}$};
        \draw[axis] (0,0,-2) -- (0,0,4) node[anchor=west]{$w_{2}$};
        %-------------------

        %--------draw first circle
        \begin{scope}[canvas is xz plane at y=0.9]
	
    	    \draw[fill=gray!10, opacity=0.6](1.2,0) arc[x radius=1.5, y radius=1.5, start angle=0, end angle=90];
    	    
    	    \draw[fill=gray!10, opacity=0.6] (1.2,0) arc[x radius=1.5, y radius=1.5, start angle=0, end angle=-90];
    	    
	        \draw [dotted,fill=gray!10, opacity=0.6] (1.2,0) arc[x radius=1.5, y radius=1.5, start angle=0, end angle=180];
	        
	        \draw [dotted,fill=gray!10, opacity=0.6] (1.2,0) arc[x radius=1.5, y radius=1.5, start angle=0, end angle=-180];

        \end{scope}
        %-------------------

        %--------connection between first and second big circle 

        \draw[-](0,1.04,1.555)-- (0,3.7,0);
        \draw[-](0,1,-1.5)-- (0,3.68,0);
        %-------------------

        %--------draw second small circle (point) and zeta point
        \begin{scope}[canvas is xz plane at y=3.7]

	        \draw  node[fill,color=black,circle,scale=0.2]{} (0,1);
	        
        	\node at (0,-0.3) {$\tilde{z}$} ;

        \end{scope}
        %-------------------

        %-------- draw connection between second small (point) and last circle and F_ell node

        \draw[] (0,3.71,0)-- (0,5.16,1.19);
        
        \draw[] (0,3.68,0)-- (0,5.06,-1.19);
        
        \node at (2,5, 2.2) {$F_\ell$} ;
        %-------------------

        %--------draw last circle
        \begin{scope}[canvas is xz plane at y=5.2]
	
	        \draw[fill=gray!10, opacity=0.6]  (0,0) circle (1.2);
	
        \end{scope}	
        %-------------------
	
	\end{tikzpicture} % end symmetric set F_ell
    }

\subfigure
    {%	
	\begin{tikzpicture} % begin initial set E 
		%--------  initialisation settings
		[tdplot_main_coords,
		cube/.style={very thin,black},
		grid/.style={very thin,gray },
		axis/.style={->,black,thick, dotted}]
		%------------------
		
		% draw and label 3d axes
		\draw[axis] (0,-2,0) -- (0,7,0) node[anchor=west]{$z$};
		\draw[axis] (-2,0,0) -- (6,0,0)   node[anchor=east]{$w_{1}$};
		\draw[axis] (0,0,-2) -- (0,0,4) node[anchor=west]{$w_{2}$};
		%-------------------

\begin{scope}[canvas is xz plane at y=0.9]
	
    	    \draw[fill=gray!10, opacity=0.6](1.2,0) arc[x radius=1.5, y radius=1.5, start angle=0, end angle=90];
    	    
    	    \draw[fill=gray!10, opacity=0.6] (1.2,0) arc[x radius=1.5, y radius=1.5, start angle=0, end angle=-90];
    	    
	        \draw [dotted,fill=gray!10, opacity=0.6] (1.2,0) arc[x radius=1.5, y radius=1.5, start angle=0, end angle=180];
	        
	        \draw [dotted,fill=gray!10, opacity=0.6] (1.2,0) arc[x radius=1.5, y radius=1.5, start angle=0, end angle=-180];

        \end{scope}
        %-------------------

        %--------connection between first and second big circle 

        \draw[-](0,1.04,1.555)-- (0,3.7,0);
        \draw[-](0,1,-1.5)-- (0,3.68,0);
	    %-------------------

        %--------draw second  big circle and zeta point
        \begin{scope}[canvas is xz plane at y=3.7]
	        \draw  node[fill,color=black,circle,scale=0.2]{} (0,1);
        	\node at (0,-0.3) {$\tilde{z}$} ;
	
        \end{scope}
        %-------------------

        %--------draw connection between big second and last circle and set E node

	    \draw[] (0,3.7,2.2)-- (0,5.16,1.19+2.2);
	    
	    \draw[] (0,3.68,2.2)-- (0,5.06,-1.19+2.2);
	    
	%    \node at (2,4,3.5) {$E$} ;
	    \node at (2,3.3,3) {$E$} ;
	    %-------------------

        %--------draw last circle
	    \begin{scope}[canvas is xz plane at y=5.2]
	
		    \draw[fill=gray!10, opacity=0.6]  (0,2.2) circle (1.2);
		
	    \end{scope}	
	    %-------------------
	\end{tikzpicture} % end initial set E
    }
}
\caption{Rigidity \eqref{class rigidity} fails, since the set $\{ \ell^{\wedge} >0 \}$ is disconnected by a point $\tilde{z} \in \mathbb{R}$, where $\ell(\tilde{z})=0$, thus, violating \eqref{barchiesi_ell positive}.}
\label{ell zero counter fig_label}
\end{figure}
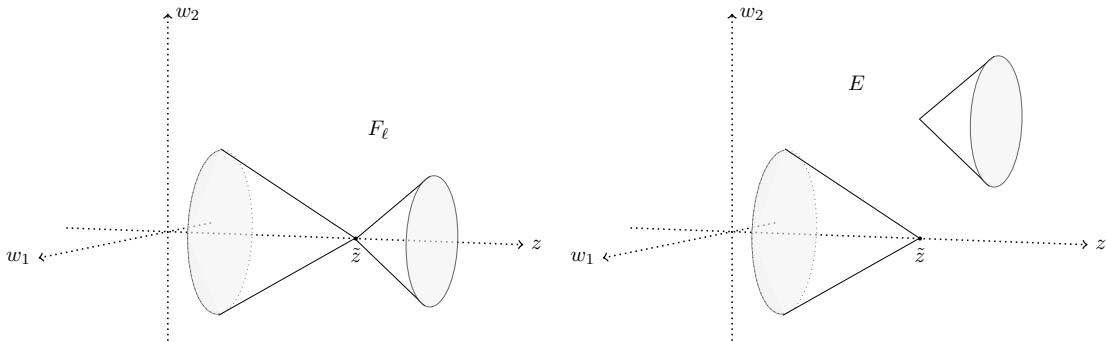
%\vspace{1cm}

Note that, once condition  \eqref{no flat parts symmetric} (or, equivalently, \eqref{sobolev_assumption1}) is imposed, we have that $\ell \in W^{1,1} (\Omega)$, and since $\Omega$ is a one-dimensional set,  $\ell$ is absolutely continuous in $\Omega$. Therefore the condition imposed in \cite{fusco2013stability} to rule out situations as in Figure \ref{ell zero counter fig_label} can be written as 
\begin{equation} \label{barchiesi_ell positive}
\ell(z) >0 \mbox{ for all } z \in \Omega,
\end{equation}
see \cite[Condition~(1.4)]{fusco2013stability}.

 It turns out that \eqref{no flat parts symmetric} and \eqref{barchiesi_ell positive} are sufficient for rigidity (see \cite[Theorem~1.2]{fusco2013stability}), as explained below.

 \begin{theorem}\label{nico-marco-filippo}
Let $\ell : \mathbb{R} \rightarrow [0, \infty)$ be a measurable function, such that $F_{\ell}$ is a set of finite perimeter and finite volume. Let $\Omega \subset \mathbb{R}$ be a connected open set, and suppose that \eqref{no flat parts symmetric} and \eqref{barchiesi_ell positive} are satisfied. If
\[
P( F_{\ell} ; \Omega \times \mathbb{R}) = P (E ; \Omega \times \mathbb{R} ),
\]
then $E \cap ( \Omega \times \mathbb{R} ) $ is equivalent to (a translation along $\mathbb{R}^{n-1}$) of $F_{\ell} \cap ( \Omega \times \mathbb{R})$. Here, $P(E; \Omega \times \mathbb{R})$ denotes the relative perimeter of $E$ in $\Omega \times \mathbb{R}$.
 \end{theorem}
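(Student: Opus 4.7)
The plan is to follow the classical two-step strategy for symmetrisation rigidity: first show that, for $\mathcal{H}^1$-a.e.\ $z\in\Omega$, the slice $E_z$ is an $(n-1)$-dimensional ball of radius $r_\ell(z)$; then show that the centre of that ball is independent of $z$ on the connected set $\Omega$.

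For the first step, Proposition \ref{fuscoreg} lets one translate \eqref{no flat parts symmetric} into $\ell\in W^{1,1}(\Omega)$, so that $\ell$ is absolutely continuous on $\Omega$. Combining Vol'pert's theorem (giving $E_z$ of finite perimeter in $\mathbb{R}^{n-1}$ for $\mathcal{H}^{1}$-a.e.\ $z$) with a Cauchy--Schwarz estimate on the vertical fibres of $\partial^{*}E$, I would derive the slicing bound
\[
P(E;\Omega\times\mathbb{R}^{n-1}) \;\ge\; \int_\Omega \sqrt{P(E_z;\mathbb{R}^{n-1})^2+\ell'(z)^2}\,dz,
\]
which an area-formula calculation based on the parametrisation $(z,\theta)\mapsto(z,r_\ell(z)\theta)$, $\theta\in S^{n-2}$, of $\partial^{*}F_\ell$ shows to be saturated by $F_\ell$. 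The Euclidean isoperimetric inequality in $\mathbb{R}^{n-1}$ gives $P(E_z;\mathbb{R}^{n-1})\ge P((F_\ell)_z;\mathbb{R}^{n-1})$ for a.e.\ $z$, with equality iff $E_z$ is a ball. The hypothesis $P(E;\Omega\times\mathbb{R}^{n-1})=P(F_\ell;\Omega\times\mathbb{R}^{n-1})$ therefore saturates both inequalities simultaneously; using \eqref{barchiesi_ell positive} to guarantee $r_\ell>0$, one extracts a measurable map $b\colon\Omega\to\mathbb{R}^{n-1}$ such that $E_z=B^{n-1}(b(z),r_\ell(z))$ for a.e.\ $z$.

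For the second step, I would reparametrise $\partial^{*}E\cap(\Omega\times\mathbb{R}^{n-1})$ (up to $\mathcal{H}^{n-1}$-null sets) as $(z,\theta)\mapsto(z,b(z)+r_\ell(z)\theta)$; the area formula then yields an integrand of the form $\sqrt{1+(b'(z)\cdot\theta+r_\ell'(z))^2}\,r_\ell(z)^{n-2}$. A Jensen-type inequality averaged over $S^{n-2}$, exploiting $\int_{S^{n-2}}b'(z)\cdot\theta\,d\mathcal{H}^{n-2}(\theta)=0$ by symmetry, shows that the total perimeter strictly exceeds $P(F_\ell;\Omega\times\mathbb{R}^{n-1})$ unless $b'(z)\equiv 0$. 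Combined with a Sobolev/BV regularity statement for $b$ and the connectedness of $\Omega$, this forces $b$ to be $\mathcal{H}^1$-a.e.\ equal to a constant $\tau\in\mathbb{R}^{n-1}$, so that $E\cap(\Omega\times\mathbb{R}^{n-1})$ is equivalent to $(F_\ell+(0,\tau))\cap(\Omega\times\mathbb{R}^{n-1})$.

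The main obstacle will be giving rigorous meaning to the ``derivative of the centre'': the map $b$ is only measurable a priori, and I would need to deduce its Sobolev regularity from the geometric equality case, presumably by reading off $Db$ from the horizontal component of $D1_E$ under the saturation of the slicing inequality. It is precisely here that \eqref{barchiesi_ell positive} is indispensable: a zero of $\ell$ inside $\Omega$ would disconnect the argument and permit independently translated copies of $F_\ell$ on either side, as in Figure \ref{ell zero counter fig_label}.
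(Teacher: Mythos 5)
The paper does not prove Theorem \ref{nico-marco-filippo}: it is imported wholesale as \cite[Theorem~1.2]{fusco2013stability}, and the present paper only uses it (in the proof of the implication $(ii)\Longrightarrow(i)$ of Theorem \ref{complete characterisation}) without re-deriving it. There is therefore no in-paper proof to compare your sketch against; the natural benchmark is \cite{fusco2013stability}, which, as the introduction remarks, argues ``through a comprehensive analysis of the barycenter function'' --- broadly the strategy you describe.

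As a self-contained sketch, your first step is sound: the slicing bound you write is the $\Omega$-localised version of \eqref{perimeter_formula} (the singular term disappears because $\ell\in W^{1,1}(\Omega)$ by Proposition \ref{fuscoreg}), and combining saturation of that bound with the isoperimetric inequality in $\mathbb{R}^{n-1}$ does yield $E_z =_{\mathcal{H}^{n-1}} B^{n-1}(b(z),r_\ell(z))$ for $\mathcal{H}^1$-a.e.\ $z\in\Omega$, with $b$ a priori merely measurable. The genuine gap is exactly the one you flag at the end: the area-formula evaluation of $P(E;\Omega\times\mathbb{R}^{n-1})$ via the parametrisation $(z,\theta)\mapsto(z,b(z)+r_\ell(z)\theta)$ already \emph{presupposes} that $b$ is a Sobolev (or at least BV) function, and this is not given. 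The substantive content of the theorem lies precisely in establishing that regularity from the equality case --- roughly, extracting a bound on $Db$ from the horizontal component of $D\chi_E$ once the slicing inequality is saturated, exploiting that the normal direction on $\partial^{*}E$ is forced slice-by-slice --- and only then can the Jensen averaging over $S^{n-2}$ be carried out to conclude $b'\equiv 0$ and hence, using connectedness of $\Omega$, that $b$ is constant. Your plan is the right one and you correctly identify \eqref{barchiesi_ell positive} as the hypothesis preventing the argument from disconnecting, but as written the proposal names the obstacle without crossing it; the regularity of $b$ is where essentially all the work is.
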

\subsection{The main result}

Our contribution is to show that conditions \eqref{no flat parts symmetric} and \eqref{barchiesi_ell positive} are also \textit{necessary} for rigidity. As we have already observed, the proof of the Theorem \ref{nico-marco-filippo} requires the localisation of the problem in an open and connected set $\Omega \subset \mathbb{R}$, to impose the condition \eqref{sobolev_assumption1}. We will show that this can be avoided.
We also notice that, if $F_{\ell}$ is a set of finite perimeter and finite volume, in general, we only have that $\ell \in BV(\mathbb{R})$ and this means that $\ell$ may be discontinuous. Therefore, we need to rephrase condition \eqref{barchiesi_ell positive} in terms of the approximate $\liminf$ $\ell^{\wedge}$ of $\ell$ at every point $z \in \mathbb{R}$, see Section \ref{preliminaries}. We are now able to state our main result. Below, $\mathring{J}$ denotes the interior of $J$.

\begin{theorem} \label{complete characterisation}
Let $\ell: \mathbb{R}\rightarrow [0, \infty)$ be a measurable function,  such that $F_{\ell}$ is a set of finite perimeter and finite volume. Then, the following statements are equivalent:
\begin{enumerate}
    \item[(i)] \eqref{class rigidity} holds true; \label{rigidity_stament}
    \vspace{.2cm}
    \item[(ii)] $\{ \ell^{\wedge}  >0\}$ is a (possibly unbounded) interval $J$ and $\ell \in W^{1,1}(\mathring{J})$. \label{assumptions_statement}
\end{enumerate}
\end{theorem}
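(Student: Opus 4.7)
The plan is to prove the equivalence by establishing the two implications separately: $\mathrm{(ii)} \Rightarrow \mathrm{(i)}$ is obtained by localising Theorem~\ref{nico-marco-filippo} to $\Omega = \mathring{J}$, while $\mathrm{(i)} \Rightarrow \mathrm{(ii)}$ is the substantive new contribution and is handled by contrapositive, producing explicit families of counterexamples.

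For $\mathrm{(ii)} \Rightarrow \mathrm{(i)}$, set $\Omega := \mathring{J}$, which is a connected open set because $J$ is an interval. Since $\ell \in W^{1,1}(\Omega)$, Proposition~\ref{fuscoreg} gives \eqref{no flat parts symmetric} on $\Omega$; moreover, a $W^{1,1}$ function on a one-dimensional domain admits an absolutely continuous representative that coincides with $\ell^\wedge$ and is therefore strictly positive on $\Omega$, so \eqref{barchiesi_ell positive} also holds. Given any $E \in \mathcal{K}(\ell)$, the facts $\ell^\wedge = 0$ on $\mathbb{R} \setminus J$ and $\ell = \ell^\wedge$ almost everywhere (recall $\ell \in BV(\mathbb{R})$ by Proposition~\ref{ell_regular}) force both $E$ and $F_\ell$ to be $\mathcal{H}^n$-negligible outside $J \times \mathbb{R}^{n-1}$. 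Using the slicewise nature of \eqref{perimeter inequality}, the global equality $P(E) = P(F_\ell)$ then upgrades to the relative equality $P(E; \Omega \times \mathbb{R}^{n-1}) = P(F_\ell; \Omega \times \mathbb{R}^{n-1})$; Theorem~\ref{nico-marco-filippo} supplies $\tau \in \mathbb{R}^{n-1}$ such that $E \cap (\Omega \times \mathbb{R}^{n-1})$ coincides with $(F_\ell + (0,\tau)) \cap (\Omega \times \mathbb{R}^{n-1})$ up to negligible sets, and the equivalence extends to $\mathbb{R}^n$ since everything of positive measure is concentrated on $J \times \mathbb{R}^{n-1}$.

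For $\mathrm{(i)} \Rightarrow \mathrm{(ii)}$ I argue by contrapositive. If $\{\ell^\wedge > 0\}$ is not an interval, pick $z_1 < z_0 < z_2$ with $\ell^\wedge(z_1), \ell^\wedge(z_2) > 0$ and $\ell^\wedge(z_0) = 0$; the one-sided BV traces $\ell(z_0^\pm)$, combined with $\ell^\wedge(z_0) = \min(\ell(z_0^-),\ell(z_0^+)) = 0$, imply that the $z_0$-slice of $F_\ell$ is $\mathcal{H}^{n-1}$-negligible on at least one side of $z_0$, so that for any $\tau \in \mathbb{R}^{n-1} \setminus \{0\}$ the set
\[
E := \bigl(F_\ell \cap \{z < z_0\}\bigr) \cup \bigl((F_\ell + (0,\tau)) \cap \{z > z_0\}\bigr)
\]
is $\ell$-distributed, enjoys $P(E) = P(F_\ell)$ (the two pieces being essentially disjoint across $\{z = z_0\}$, so their reduced boundaries meet in a set of zero $\mathcal{H}^{n-1}$-measure), and is not equivalent to any global translation of $F_\ell$ along $\mathbb{R}^{n-1}$. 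If instead $J$ is an interval but $\ell \notin W^{1,1}(\mathring{J})$, Proposition~\ref{fuscoreg} supplies a non-negligible flat vertical part of $\partial^* F_\ell$ inside $\mathring{J} \times \mathbb{R}^{n-1}$; at a jump point $z_0 \in \mathring{J}$ of $\ell$ with $\ell^\wedge(z_0) = a < b = \ell^\vee(z_0)$, this part is the annulus $\{z_0\} \times \bigl(B^{n-1}(0,r_b) \setminus \overline{B^{n-1}(0,r_a)}\bigr)$ with $r_s := (s/\omega_{n-1})^{1/(n-1)}$, and for any $\tau \in \mathbb{R}^{n-1} \setminus \{0\}$ with $|\tau| \leq r_b - r_a$, translating the narrower side of $F_\ell$ by $(0,\tau)$ keeps the smaller cross-section strictly inside the larger one, so the flat contribution at $z_0$ is reshaped into $B^{n-1}(0,r_b) \setminus B^{n-1}(\tau,r_a)$ with unchanged $\mathcal{H}^{n-1}$-measure $b-a$, and the resulting $\ell$-distributed set preserves the perimeter while breaking rigidity.

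The principal obstacle will be this second case when the singular part of $D\ell$ in $\mathring{J}$ consists only of a Cantor-type measure without jumps, since there is then no discrete point at which to split $F_\ell$. To handle it I would follow the strategy of \cite{cagnetti2020rigidity} and construct an $\mathbb{R}^{n-1}$-valued translation profile that is locally constant on the set where $\ell$ is absolutely continuous but absorbs all of the variation of $D^c\ell$ on the Cantor part, gluing the correspondingly translated slices of $F_\ell$ into an $\ell$-distributed set whose perimeter equals $P(F_\ell)$ through an analogous cancellation of the flat vertical contributions now spread continuously along the Cantor set.
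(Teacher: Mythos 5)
Your proposal follows the same architecture as the paper's proof: localise Theorem~\ref{nico-marco-filippo} to $\Omega = \mathring{J}$ for $\mathrm{(ii)} \Rightarrow \mathrm{(i)}$, and handle $\mathrm{(i)} \Rightarrow \mathrm{(ii)}$ by contrapositive, splitting into the three failure modes (non-interval $\{\ell^\wedge>0\}$, a jump of $\ell$ in $\mathring{J}$, and a Cantor part of $D\ell$ in $\mathring{J}$) and exhibiting an $\ell$-distributed competitor $E \in \mathcal{K}(\ell)$ that is not a translation of $F_\ell$ in each case. The constructions you describe for the first two cases coincide with the paper's Propositions~\ref{proposition interval} and \ref{propostion no jumps} — cut $F_\ell$ across the offending hyperplane $\{z=\bar z\}$ and translate one piece by a small $\tau \in \mathbb{R}^{n-1}$, with the smallness condition $0 < |\tau| < r_\ell^\vee(\bar z) - r_\ell^\wedge(\bar z)$ in the jump case ensuring the displaced slice stays nested. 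You are in fact slightly more careful than the paper in two places: you make the global-to-local upgrade of the perimeter equality explicit in $\mathrm{(ii)} \Rightarrow \mathrm{(i)}$ (the paper leaves this to the reader), and you insist that the jump point be in $\mathring{J}$, which is logically necessary (a jump at $\partial J$, as for a cylinder, produces a competitor that collapses to a genuine translation and so does not break rigidity), whereas the paper's Proposition~\ref{propostion no jumps} is stated without this caveat.

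The gap, however, is exactly where you anticipate it: the purely Cantorian case. You correctly identify that one must build an $\mathbb{R}^{n-1}$-valued translation profile of Cantor type and shear the slices of $F_\ell$ accordingly, but you leave the perimeter-preservation claim as an ``analogous cancellation'' without a mechanism. The paper's Proposition~\ref{proposition_cantorian} fills this by (a) fixing $\lambda\in(0,1)$ and the profile $g(z)=\lambda(r_\ell(z)-r_\ell(a))$ on a subinterval $(a,b)\Subset\mathring{J}$ where $\ell$ is bounded below, (b) discretising $\ell$ by step functions $\ell^k$ built from increasingly fine partitions realising $|D\ell|(a,b)$, so that each $F_{\ell^k}$ has only jump singularities, (c) applying the jump construction finitely many times to produce sheared competitors $E^k$ with $P(E^k;J\times\mathbb{R}^{n-1})=P(F_{\ell^k};J\times\mathbb{R}^{n-1})$, and (d) passing to the limit using $L^1$ lower semicontinuity of perimeter together with the identity $\lim_k P(F_{\ell^k};J\times\mathbb{R}^{n-1})=P(F_\ell;J\times\mathbb{R}^{n-1})$, which itself rests on Corollary~\ref{corollary_perimeter} and the total-variation convergence $|D\ell^k|(J)\to|D\ell|(J)$. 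Without this approximation-and-limit scheme (or a direct perimeter computation for the sheared set that your proposal does not supply), the Cantor case is not proved, and that is precisely the technical heart of the direction $\mathrm{(i)} \Rightarrow \mathrm{(ii)}$.
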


As we have already pointed out, the proof of the direction $(ii) \Longrightarrow (i)$ of Theorem \ref{complete characterisation} relies on the proof of \cite[Theorem 1.2]{fusco2013stability}. We will prove that the converse  $(i) \Longrightarrow (ii)$ is also true. 
 We would like to emphasise that our approach does not lie on the comprehensive use of a general perimeter formula for sets $E \subset \mathbb{R}^{n}$ satisfying equality in \eqref{perimeter inequality}, as it appears in \cite{cagnetti2017essential}. On the contrary, inspired by the techniques developed in \cite{cagnetti2020rigidity}, we analyse the properties of the function $\ell$ and we provide a careful study  of the transformations that can be applied  on the symmetric set $F_{\ell}$, without creating any perimeter contribution.

To this end, the rest of the paper is structured as follows. In Section \ref{preliminaries}, we fix the notation, build the necessary background  and we gather some preliminary results that appeared in the literature.  In Section \ref{results}, we show the direction $(i) \Longrightarrow (ii)$ of Theorem \ref{complete characterisation}, by  studying the properties of the distribution function $\ell,$ and exploiting counterexamples where rigidity is violated. 

\section{Background and proof of the Theorem 1.3 $(ii) \Longrightarrow (i)$ }\label{preliminaries}

In this section, we will recall the necessary machinery, which will be used throughout the paper. The interested reader could refer to
\cite{ambrosio2000functions,fusco2013stability,evans2018measure,federer2014geometric,maggi2012sets,simon1983lectures}.

We fix $n\in \mathbb{N}$, with $n \geq 2$. For each $x \in \mathbb{R}^{n}$, we write $x=(z,w)$, with $z \in \mathbb{R}$ and $w \in \mathbb{R}^{n-1}$. The standard Euclidean norm will be denoted by $|\cdot|$ in $\mathbb{R}, \ \mathbb{R}^{n-1}$ or  $\mathbb{R}^{n}$ depending on the context. For $1 \leq m \leq n$, we will denote the $m$-dimensional Hausdorff measure in $\mathbb{R}^{n}$ by $\mathcal{H}^{m}.$ For every radius $\rho>0$ and $x \in \mathbb{R}^{n}$ we write $B_{\rho}(x)$ for the open ball of $\mathbb{R}^{n}$ with radius $\rho$ and centered at $x.$ The volume of the unit ball in $\mathbb{R}^{n}$ is denoted as $\omega_{n}$, i.e. $\omega_{n}:= \mathcal{H}^{n} (B_{1}(0))$. 
Note that throughout the paper, in case of balls in different dimensions, we will denote the corresponding ball in  dimension $m$ with radius $\rho$ centred at $w \in \mathbb{R}^{m}$  by writing $B^{m}(w,\rho)$.

Now, for $x \in \mathbb{R}^{n}$ and $\nu \in \partial B_{1}(0)$, we set
\[
H_{x,\nu}^{+}= \left\{ y\in \mathbb{R}^{n} : \langle (y-x), \nu \rangle\geq 0 \right\}
\]
and
\[
H_{x,\nu}^{-}  = \left\{ y\in \mathbb{R}^{n} : \langle (y-x), \nu \rangle \leq 0 \right\}.
\]

Let  $\{E_{j} \} _{j \in \mathbb{N}}$ be a sequence of Lebesgue measurable sets in $\mathbb{R}^{n}$ with $\mathcal{H}^{n} (E_{j}) < \infty$ for every $j \in \mathbb{N}$, and let $E \subset \mathbb{R}^{n}$ be a Lebesgue measurable set with $\mathcal{H}^{n} (E) < \infty$. We say that $\{ E_{j} \}_{j \in \mathbb{N}}$ \textit{converges to} $E$ as $j \rightarrow \infty$ and we write 
\[
E_{j} \rightarrow E  \quad \mbox{ if } \quad  \mathcal{H}^{n}( E_{j} \triangle E ) \rightarrow 0 \mbox{ as } j \rightarrow \infty, 
\]
where $\triangle$ stands for the symmetric difference of sets.  Additionally, if $E_{1}, E_{2} \subset \mathbb{R}^{n}$ are Lebesgue measurable sets, we say that 
\[
E_{1} \subset_{\mathcal{H}^{n}} E_{2} \quad \mbox{if} \quad  \mathcal{H}^{n} ( E_{1} \backslash E_{2} ) =0,
\]
and
\[
E_{1} =_{\mathcal{H}^{n}} E_{2} \quad \mbox{if} \quad \mathcal{H}^{n} ( E_{1} \triangle E_{2} ) =0.
\]
Moreover, the characteristic function of a Lebesgue measurable set $E \subset \mathbb{R}^{n}$ will be denoted by $\chi_{E}$. 

\subsection{Density points}
Let $E \subset \mathbb{R}^{n}$ be a Lebesgue measurable set and $ x \in \mathbb{R}^{n}.$ We define the \textit{lower} and \textit{upper} $n$-\textit{dimensional densities of $E$ at $x$} as 
\[
\theta_{*} (E,x) = \liminf_{\rho \rightarrow 0^{+}} \frac{ \mathcal{H}^{n} ( E \cap B_{\rho}(x))}{ \omega_{n} \rho^{n} }, \quad  \mbox{ and } \quad \theta^{*} (E,x) = \limsup_{\rho \rightarrow 0^{+}} \frac{ \mathcal{H}^{n} ( E \cap B_{\rho}(x))}{ \omega_{n} \rho^{n} },
\]
respectively.
The maps $x \longmapsto \theta_{*} (E,x)$ and  $x \longmapsto \theta^{*} (E,x)$ are Borel functions (even in case where $E$ is Lebesgue non-measurable) and they coincide $\mathcal{H}^{n}$-a.e. in $\mathbb{R}^{n}.$ Hence, the $n$-dimensional density of $E$ at $x$ is defined as the Borel function
\[
\theta(E,x) =\lim_{\rho \rightarrow 0^{+}} \frac{ \mathcal{H}^{n} ( E \cap B_{\rho}(x))}{ \omega_{n} \rho^{n} }, \mbox{ for $\mathcal{H}^{n}$-a.e. } x \in \mathbb{R}^{n}. 
\] 
For each  $s \in [0,1],$ we define the \textit{set of points of density $s$ with respect to $E$} as
\[
E^{(s)} := \{ x \in \mathbb{R}^{n} \ : \ \theta (E,x)=s\}
\]
The \textit{essential boundary } $\partial^{e}E$ of  $E$ is defined as the set
\[
\partial^{e} E := \mathbb{R}^{n} \backslash( E^{(0)} \cup E^{(1)} )
\]
\subsection{Approximate limits of measurable functions}
Let $g: \mathbb{R}^{n} \rightarrow \mathbb{R}$ be a Lebesgue measurable function. We define the \textit{approximate upper limit} $g^{\vee}(x)$ and the \textit{approximate lower limit} $g^{\wedge}(x)$ of $g$ at $x \in \mathbb{R}^{n}$ as 
\begin{equation} \label{upper_approx}
g^{\vee}(x) = \inf \left\{ s \in \mathbb{R}: x \in \{ g >s\}^{(0)}\right\} = \inf \left\{ s \in \mathbb{R}: x \in \{ g <s \}^{(1)} \right\}
\end{equation}
and 
\begin{equation} \label{lower_approx}
	g^{\wedge}(x) = \sup \left\{ s \in \mathbb{R}: x \in \{ g <s\}^{(0)}\right\} = \sup \left\{ s \in \mathbb{R}: x \in \{g>s\}^{(1)} \right\},
\end{equation}
respectively. We highlight the fact that both  $g^{\vee}$ and $g^{\wedge}$ are Borel functions and they are defined \textit{for every} $x \in \mathbb{R}^{n}$ with values in $\mathbb{R} \cup \{ \pm \infty\}.$ In addition, if $g_{1}: \mathbb{R}^{n} \rightarrow \mathbb{R}$ and $g_{2}: \mathbb{R}^{n} \rightarrow \mathbb{R}$ are measurable functions such that $g_{1} = g_{2}$ $\mathcal{H}^{n}$-a.e. on $\mathbb{R}^{n}$, then it turns out that 
\[
g_{1}^{\wedge}(x) = g_{2}^{\wedge}(x) \quad \mbox{and} \quad g_{1}^{\vee}(x) = g_{2}^{\vee}(x) \quad \mbox{ for every } x \in \mathbb{R}^{n}.
\]
The \textit{approximate discontinuity set} $S_{g}$ of $g$ is defined as
\[
S_{g} := \{g^{\wedge} \neq g^{\vee} \},
\]
and satisfies $\mathcal{H}^{n} (S_{g}) =0.$ Moreover, even if $g^{\wedge}, \ g^{\vee}$ could take values $\pm \infty$ on $S_{g}$, it turns out that the difference $g^{\vee} - g^{\wedge}$ is well-defined in $\mathbb{R} \cup \{ \pm \infty \} $ for every point $x \in S_{g}.$ In the light of the above considerations, the \textit{approximate jump} $[\, g \,]$ of $g $ is the Borel function  $[\, g \,]:\mathbb{R}^{n} \rightarrow [0,\infty]$ defined as 
\[
[\, g \,](x) := 
\begin{cases}
g^{\vee}(x) - g^{\wedge}(x),& \mbox{ if } x \in S_{g} \\ 
0, & \mbox{elsewhere.}
\end{cases}
\]

Let $E\subset \mathbb{R}^{n}$ be a Lebesgue measurable set. We will say that 
$s \in \mathbb{R} \cup \{ \pm \infty\}$ is the approximate limit of $g$ at $x$ with respect to $E$, denoted by $s = \mbox{aplim} (g , E, x),$ if 
\[
\theta \left( \{ |g-s | > \epsilon \} \cap E ; x \right)=0, \quad \mbox{ for every } \epsilon >0 \qquad (s \in \mathbb{R}),
\]

\[
\theta \left( \{ g< M \} \cap E ; x \right)=0, \quad \mbox{ for every } M >0 \qquad (s= + \infty),
\]
and 
\[
\theta \left( \{ g> - M \} \cap E ; x \right)=0, \quad \mbox{ for every } M >0 \qquad (s = - \infty).
\]

We will say that $x \in S_{g}$ is a \textit{jump point} of $g$ if there exist $\nu \in \partial B_{1}(0)$ such that
\[
g^{\vee}(x)= \mbox{aplim} (g, H_{x,\nu}^{+},x) \quad \mbox{and} \quad 
g^{\wedge}(x)= \mbox{aplim} (g, H_{x,\nu}^{-},x).
\]

In this spirit, we define the \textit{approximate jump direction} $\nu_{g}(x)$ of $g$ at $x$ as $\nu_{g}(x):=\nu$. The set of approximate jump points of $g$ is denoted by $J_{g}$. Note that $J_{g} \subset S_{g}$ and $\nu_{g} : J_{g} \rightarrow \partial B_{1}(0)$ is a Borel function.

\subsection{Functions of Bounded Variation}
Let $\Omega \subset \mathbb{R}^{n}$ be an open set. We denote by $C_{c}^{1}( \Omega ; \mathbb{R}^{n})$ and by $C_{c}( \Omega ; \mathbb{R}^{n})$ the class  of $C^{1}$ functions with compact support and the class of  all continuous functions  with compact support from $\Omega$ to $\mathbb{R}^{n}$, respectively. We  also recall the Sobolev space $W^{1,1} (\Omega )$, that is, the space of all functions $g \in L^{1} (\Omega )$, whose distributional derivative $Dg$ belongs to $L^{1} (\Omega )$.  

Given $g \in L^{1} (\Omega)$, the \textit{total variation} of $g$ in $\Omega $ is defined as 
\[
|Dg| ( \Omega  ) = \sup \left\{ \int_{\Omega } g(x) \ \mbox{div} \ T(x) \ dx : T \in C_{c}^{1} ( \Omega ; \mathbb{R}^{n} ) , \ |T| \leq 1 
\right\}.
\]
We then define the space of functions of bounded variation in $\Omega$, denoted by $BV(\Omega)$, as the set of functions $g \in L^{1} (\Omega)$ such that $|Dg|(\Omega) <\infty$. In addition, we will say that $g \in BV_{loc} (\Omega)$, if $g \in BV(\Omega ')$ for every $\Omega' \subset \subset \Omega$.
If $g \in BV(\Omega )$, due to Radon-Nikodym decomposition of $D g$ with respect to $\mathcal{H}^{n}$, we have
\[
Dg= D^{ac}g + D^{s}g,
\]
where $D^{ac}g$ and $D^{s}g$ are mutually singular measures and $D^{ac}g \ll \mathcal{H}^{n}$.
The density of  $D^{ac}g$ with respect to $\mathcal{H}^{n}$ will be denoted as $\nabla g$, and we have that $\nabla g \in L^{1} (\Omega, \mathbb{R}^{n})$ with $D^{ac}g = \nabla g \  d \mathcal{H}^{n}$. Additionally, it turns out that $\mathcal{H}^{n-1} ( S_{g} \backslash  J_{g} ) = 0$ and $[\, g\, ] \in L^{1}_{loc} ( \mathcal{H}^{n-1} \mres J_{g})$. The \textit{jump part of} $g$ is the $\mathbb{R}^{n}$-valued Radon measure given by
\begin{equation} \label{jump_Radon_measure}
D^{j} g = [\, g\, ]  \nu_{g} d \mathcal{H}^{n-1} \mres J_{g}.
\end{equation}
 Finally, the \textit{Cantorian part} $D^{c}g$ of $Dg$ is defined as the $\mathbb{R}^{n}$-valued Radon measure 
 \[
 D^{c} g = D^{s}g - D^{j} g,
 \]
and is such that $|D^{c}g | (N)=0$ for every set $N \subset \mathbb{R}^{n}$, which is $\sigma$-finite with respect to $\mathcal{H}^{n-1}$.

Note, that in the special case $n=1$, if $(a,b) \subset \mathbb{R}$ is an open interval, every $g \in BV(a,b)$ can be decomposed as the sum
\begin{equation}
    g = g^{ac} + g^{j}+ g^{c},
\end{equation}
where $g^{ac} \in W^{1,1} (a,b)$, $g^{j}$ is a purely jump function  (that is, $D g^{j} = D^{j}g^{j})$ and $g^{c}$ is a purely Cantorian function (that is, $Dg^{c} = D^{c}g^{c}$) (see \cite[Corollary~3.33]{ambrosio2000functions}). Moreover, the total variation $|Dg|$ of $Dg$ can be written as
\begin{equation}\label{1D_totalvariation}
 |Dg| (a,b) = \sup \left\{ \sum_{i=1}^{M} | g(x_{i+1}) - g(x_{i}) |: \ a<x_{1} < x_{2} < \cdots < x_{M}  <b \right\}
\end{equation}
where the supremum is taken over all $M \in \mathbb{N}$ and over all possible partitions of the interval $(a,b)$ with $a< x_{1} < x_{2} < \cdots < x_{M} <b$.

\subsection{Sets of locally finite perimeter in the Euclidean space} \label{setsofFP}

Let $n,m \in \mathbb{N}$ with $1\leq m \leq n$. Let also $E \subset \mathbb{R}^{n}$ be an $\mathcal{H}^{m}$-measurable set. We say that $E$ is a \textit{countably $\mathcal{H}^{m}$-rectifiable} set if there exist a countable family of Lipschitz functions $(g_{j})_{j \in \mathbb{N}}$, where $g_{j}: \mathbb{R}^{m} \rightarrow \mathbb{R}^{n}$,
such that $E \subset_{\mathcal{H}^{m}} \bigcup_{j \in \mathbb{N}} g_{j} (\mathbb{R}^{m})$. In addition, if $\mathcal{H}^{m} ( E \cap K) < \infty$ for every compact set $K \subset \mathbb{R}^{n}$, we say that $E$ is a \textit{locally $\mathcal{H}^{m}$-rectifiable} set.

Let $E \subset \mathbb{R}^{n}$ be a Lebesgue measurable set. We say that $E$ is a set of \textit{locally finite perimeter} in  $\mathbb{R}^{n}$ if there exists an $\mathbb{R}^{n}$-valued Radon measure $\mu_{E}$, such that 
\[
\int_{E} \nabla \psi (x) \ dx = \int_{\mathbb{R}^{n}} \psi (x) \ d \mu_{E}, \quad \mbox{ for every } \psi \in C_{c}^{1} ( \mathbb{R}^{n}).
\]
Note that,  $E$ is a set of locally finite perimeter if and only if  $\chi_{E} \in BV_{loc} (\mathbb{R}^{n})$.
If $G \subset \mathbb{R}^{n}$ is a Borel set, then the relative perimeter of $E$ in $G$ is defined as 
\[
P(E; G) := |\mu_{E} |(G).
\]
When $G=\mathbb{R}^{n}$, we ease the notation to $P(E):=P(E; \mathbb{R}^{n})$.

The \textit{reduced boundary} $\partial^{*}E$ of $E$ is the set of all $x\in \mathbb{R}^{n}$ such that 
\[
\nu_{E}(x) = \lim_{\rho \rightarrow 0^{+}} \frac{ \mu_{E}( B_{\rho}(x))}{ | \mu_{E}|( B_{\rho}(x))} \quad \mbox{ exists and belongs to } \partial B_{1}(0).
\]
The Borel function $\nu_{E}: \partial^{*} E \rightarrow \partial B_{1}(0)$ is usually referred to as the 
 \textit{measure-theoretic outer normal} to $E$. Due to Lebesgue-Besicovitch derivation theorem and \cite[Theorem~3.59]{ambrosio2000functions}, it holds that the reduced boundary $\partial^{*}E$ of $E$ is a locally $(n-1)$-rectifiable set in $\mathbb{R}^{n}$ and 
 \[
 \mu_{E} = \nu_{E} \mathcal{H}^{n-1} \mres \partial^{*}E,
 \]
 so that
\[
\int_{E} \nabla \psi (x) \ dx = \int_{\partial^{*} E} \phi (x) \nu_{E} (x) \ d \mathcal{H}^{n-1} (x) \quad \mbox{ for every } \psi \in C_{c}^{1} ( \mathbb{R}^{n}).
\]
Thus, for every Borel set $G \subset \mathbb{R}^{n}$ we have that 
\[
P (E; G) = | \mu_{E}| (G) = \mathcal{H}^{n-1} (G \cap \partial^{*} E).
\]
Finally, if $E$ is a set of locally finite perimeter, it holds  
\begin{equation} \label{locally_finite_perimeter}
\partial^{*} E \subset E^{(1/2)} \subset \partial^{e}E,
\end{equation}
and additionally, thanks to \textit{Federer's theorem} (see e.g. \cite[Theorem~3.61]{ambrosio2000functions} or \cite[Theorem 16.2]{maggi2012sets}), we have that 
\begin{equation}\label{density_boundary}
\mathcal{H}^{n-1} ( \partial^{e} E \  \backslash \ \partial^{*} E ) =0,
\end{equation}
which implies that the essential boundary $\partial^{e}E$ of $E$ is locally $\mathcal{H}^{n-1}$-rectifiable in $\mathbb{R}^{n}$.

\subsection{Preliminary results}
In this final subsection, we state some results  which will be useful in the following.

The first significant result relates to the set  $E_{z}$ defined in \eqref{slice}. Namely, as it turns out, for $\mathcal{H}^{1}$-a.e $z \in \mathbb{R}$, $E_{z}$ is a set of finite perimeter and  its reduced boundary $\partial^{*} (E_{z})$ enjoys an advantageous property.  These facts follow due to  a variant of a result by Vol'pert \cite{vol1967spaces}, which is provided in \cite[Theorem~2.4]{fusco2013stability}.

\begin{proposition}[Vol'pert]
Let $E$ be a set of finite perimeter in $\mathbb{R}^{n}$. Then for $\mathcal{H}^{1}$-a.e. $z \in \mathbb{R}$ the following hold true:
\begin{enumerate}
    \item[(i)] $E_{z}$ is a set of finite perimeter in $\mathbb{R}^{n-1}$; \vspace{0.2cm}
    \item[(ii)] $\mathcal{H}^{n-2}((\partial^{*} E)_{z} \triangle \partial^{*} (E_{z}))=0$.
\end{enumerate}
\end{proposition}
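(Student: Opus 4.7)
The plan is to derive both parts from BV-slicing applied to the indicator $u = \chi_E \in BV(\mathbb{R}^n)$. Write $\pi(z,w) = z$ for the projection on the first coordinate and decompose the outer unit normal as $\nu_E = (\nu_z^E, \nu_w^E)$ with $\nu_w^E \in \mathbb{R}^{n-1}$. Part (i) is then immediate from the Vol'pert-type coarea inequality
\[
\int_{\mathbb{R}} P(E_z; \mathbb{R}^{n-1})\, dz \leq P(E),
\]
which can be obtained via mollification of $\chi_E$ and the one-dimensional Fubini identity for BV functions: since $P(E) < \infty$, the integrand is finite for $\mathcal{H}^1$-a.e.\ $z$.

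For (ii), I would split the reduced boundary into a vertical and a non-vertical piece:  $\Sigma_\flat := \{ x \in \partial^* E : \nu_w^E(x) = 0 \}$ and $\Sigma_\sharp := \partial^* E \setminus \Sigma_\flat$. Applying the coarea formula to the Lipschitz projection $\pi$ restricted to the $(n-1)$-rectifiable set $\partial^* E$, whose tangential Jacobian at $x$ equals $|\nu_w^E(x)|$, gives
\[
\int_{\mathbb{R}} \mathcal{H}^{n-2}\bigl( A \cap \pi^{-1}(z) \bigr)\, dz \;=\; \int_A |\nu_w^E|\, d\mathcal{H}^{n-1} \qquad \text{for every Borel } A \subset \partial^* E.
\]
Taking $A = \Sigma_\flat$ shows $\mathcal{H}^{n-2}((\Sigma_\flat)_z) = 0$ for $\mathcal{H}^1$-a.e.\ $z$, so it is enough to compare $(\Sigma_\sharp)_z$ with $\partial^*(E_z)$. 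On $\Sigma_\sharp$ the blow-up of $E$ at $x = (z,w)$ is the half-space $H_{x,\nu_E(x)}^-$ with $\nu_E(x)$ not parallel to $e_1$; its horizontal slice at height $z$ is therefore a half-space in $\mathbb{R}^{n-1}$, showing that $w$ has $(n-1)$-density $1/2$ in $E_z$. Federer's theorem \eqref{density_boundary} then places $w$ in $\partial^*(E_z)$ for $\mathcal{H}^{n-2}$-a.e.\ $w \in (\Sigma_\sharp)_z$, giving the one-sided inclusion $(\partial^* E)_z \subset \partial^*(E_z)$ modulo $\mathcal{H}^{n-2}$-null sets, for $\mathcal{H}^1$-a.e.\ $z$.

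For the reverse inclusion I would match total $\mathcal{H}^{n-2}$-masses. Taking $A = \Sigma_\sharp$ in the coarea identity yields $\int_{\mathbb{R}} \mathcal{H}^{n-2}((\partial^* E)_z)\, dz = \int_{\partial^* E} |\nu_w^E|\, d\mathcal{H}^{n-1}$. Paired with the Vol'pert BV-disintegration identity
\[
\int_{\mathbb{R}} P(E_z; \mathbb{R}^{n-1})\, dz \;=\; \int_{\partial^* E} |\nu_w^E|\, d\mathcal{H}^{n-1},
\]
this forces equality of the two integrals $\int \mathcal{H}^{n-2}((\partial^* E)_z)\, dz = \int \mathcal{H}^{n-2}(\partial^*(E_z))\, dz$, which combined with the one-sided inclusion above upgrades to $\mathcal{H}^{n-2}$-equivalence of $(\partial^* E)_z$ and $\partial^*(E_z)$ for $\mathcal{H}^1$-a.e.\ $z$. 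The main obstacle is precisely the BV-disintegration identity, i.e.\ the identification of the $w$-component of the vector-valued measure $D\chi_E$ with the fibered integral $\int_{\mathbb{R}} D\chi_{E_z}\, dz$; this is the substance of Vol'pert's original argument and is handled by mollifying $\chi_E$, applying the smooth Fubini identity to $\chi_E \ast \rho_\varepsilon$, and passing to the limit using lower semicontinuity of the BV seminorm together with $L^1$-convergence of the slices for $\mathcal{H}^1$-a.e.\ $z$.
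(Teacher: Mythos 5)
The paper does not prove this proposition; it is stated with a citation to \cite{vol1967spaces} and \cite[Theorem~2.4]{fusco2013stability}, so I will assess your argument on its own terms. Part (i), the coarea factor computation, the mass-matching identity, and the observation that $\mathcal{H}^{n-2}((\Sigma_\flat)_z)=0$ for a.e.\ $z$ are all correct and are the right ingredients.

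The gap is in the one-sided inclusion. You argue that for $x=(z,w)\in\Sigma_\sharp$, the $L^1_{\mathrm{loc}}$ blow-up of $E$ at $x$ to the half-space $H^-_{x,\nu_E(x)}$ implies that $w$ has $(n-1)$-density $1/2$ in $E_z$. This does not follow: $L^1_{\mathrm{loc}}$ convergence of the rescaled sets $(E-x)/\rho$ in $\mathbb{R}^n$ gives no control over the fixed horizontal slice at height $z$, because Fubini only controls a.e.\ level, and the exceptional level can depend on $\rho$. The claim is in fact false pointwise in $z$: take $E=\{w<0\}\subset\mathbb{R}^2$ and $E'=E\setminus(\{0\}\times\mathbb{R})$. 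Then $E'=_{\mathcal{H}^2}E$, so $\partial^*E'=\partial^*E=\{w=0\}$ and $(0,0)\in\Sigma_\sharp$; yet $E'_0=\emptyset$, so $\partial^*(E'_0)=\emptyset$ and $0\notin\partial^*(E'_0)$. Your argument would wrongly place $0$ in $\partial^*(E'_0)$. The entire difficulty of Vol'pert's theorem is precisely to rule out such behaviour for a.e.\ $z$, and your blow-up step silently assumes what needs to be proved.

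To repair this one cannot argue slice-by-slice from the De Giorgi blow-up; one has to use the full BV slicing structure theorem (e.g.\ Ambrosio--Fusco--Pallara, Thm.~3.108, or Vol'pert's original iterated-Fubini argument on mollifications), which asserts that for a.e.\ $z$ the slice $u_z$ of $u=\chi_E$ satisfies $S_{u_z}=_{\mathcal{H}^{n-2}}(S_u)_z$ with matching one-sided traces and jump directions. Combined with Federer's theorem \eqref{density_boundary} this yields the set identity directly, after which the mass-matching computation you wrote is no longer even needed. Alternatively, one could keep your mass-matching reduction and replace the blow-up step by a genuine disintegration of the vector measure $D_w\chi_E$ into the fibrewise measures $D\chi_{E_z}$, but that disintegration identity at the level of \emph{measures} (not merely total variations) is itself the content of Vol'pert's theorem, so either way the hard step must be imported rather than deduced from the De Giorgi blow-up.
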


Thanks to $(ii)$ above, we will often write $\partial^{*} E_{z}$ instead of $(\partial^{*} E)_{z}$ or $\partial (E_{z})$.
The next result presents a crucial regularity property of the function $\ell$, and it can be found in \cite[Lemma~3.1]{fusco2013stability}. 
 \begin{proposition}\label{ell_regular}
Let $E$ be a set of finite perimeter in $\mathbb{R}^{n}$. Then either $\ell(z) =\infty$ for $\mathcal{H}^{1}$-a.e. $z \in \mathbb{R}$, or $\ell (z) < \infty$ for $\mathcal{H}^{1}$-a.e. $ z \in \mathbb{R}$ and $\mathcal{H}^{n} (E) < \infty$. In the latter case, we have $\ell \in BV(\mathbb{R} )$.
\end{proposition}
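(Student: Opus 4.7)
The plan is to split the statement into two parts: a dichotomy between $\mathcal{H}^1$-a.e.\ infinite and $\mathcal{H}^1$-a.e.\ finite slices (with $\mathcal{H}^n(E) < \infty$ in the latter case), followed by the $BV$ regularity of $\ell$ whenever $\ell$ is finite a.e. Throughout, $E$ is a set of finite perimeter in $\mathbb{R}^n$ with $n \geq 2$.

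For the dichotomy, I would invoke the global isoperimetric inequality in $\mathbb{R}^n$ for sets of finite perimeter, which guarantees that $\min(\mathcal{H}^n(E), \mathcal{H}^n(E^c)) < \infty$. If $\mathcal{H}^n(E) < \infty$, Fubini's theorem yields $\int_{\mathbb{R}} \ell(z)\, dz = \mathcal{H}^n(E) < \infty$, hence $\ell(z) < \infty$ for $\mathcal{H}^1$-a.e.\ $z$. If instead $\mathcal{H}^n(E^c) < \infty$, applying Fubini to $E^c$ gives $\mathcal{H}^{n-1}((E^c)_z) < \infty$ for $\mathcal{H}^1$-a.e.\ $z$; since $E_z$ and $(E^c)_z$ partition $\mathbb{R}^{n-1}$, this forces $\ell(z) = \infty$ for $\mathcal{H}^1$-a.e.\ $z$. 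The two cases are mutually exclusive, and in the second case $\mathcal{H}^n(E) < \infty$ is assumed.

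For the $BV$ regularity, suppose $\mathcal{H}^n(E) < \infty$, so in particular $\ell \in L^1(\mathbb{R})$. I would estimate $|D\ell|(\mathbb{R})$ directly from the dual characterisation of the total variation recalled in the excerpt. For any $\phi \in C_c^1(\mathbb{R})$ with $\|\phi\|_\infty \leq 1$, consider the vector field $T \in C_c^1(\mathbb{R}^n;\mathbb{R}^n)$ defined by $T(z,w) := (\phi(z), 0, \ldots, 0)$, which satisfies $\|T\|_\infty \leq 1$ and $\mbox{div}\, T(z,w) = \phi'(z)$. Combining Fubini's theorem with the Gauss--Green formula $\int_E \mbox{div}\, T\, dx = \int_{\partial^* E} T \cdot \nu_E\, d\mathcal{H}^{n-1}$ (available since $E$ has finite perimeter) yields
$$
\int_{\mathbb{R}} \ell(z)\, \phi'(z)\, dz \;=\; \int_E \mbox{div}\, T(x)\, dx \;=\; \int_{\partial^* E} T(x) \cdot \nu_E(x)\, d\mathcal{H}^{n-1}(x) \;\leq\; P(E),
$$
where in the last step I use $|T \cdot \nu_E| \leq \|T\|_\infty \leq 1$. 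Taking the supremum over admissible $\phi$ gives $|D\ell|(\mathbb{R}) \leq P(E) < \infty$, hence $\ell \in BV(\mathbb{R})$.

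The only substantive input is the isoperimetric inequality used to establish the dichotomy; without it, configurations where both $E$ and $E^c$ have infinite volume could in principle coexist with finite perimeter, and the case analysis would break down. All remaining steps are routine applications of Fubini's theorem, the divergence theorem for sets of finite perimeter, and the duality definition of the total variation.
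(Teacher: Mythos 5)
The paper does not prove Proposition~\ref{ell_regular}: it is quoted directly from \cite[Lemma~3.1]{fusco2013stability}. Your proposal must therefore be judged on its own merits. Your dichotomy step is correct: that a set of finite perimeter in $\mathbb{R}^{n}$ satisfies $\min(\mathcal{H}^{n}(E),\mathcal{H}^{n}(\mathbb{R}^{n}\setminus E))<\infty$ is a standard consequence of the (relative) isoperimetric inequality, and the Fubini argument in each branch is sound.

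There is, however, a genuine gap in the $BV$ estimate. The vector field $T(z,w)=(\phi(z),0,\ldots,0)$ does \emph{not} belong to $C_{c}^{1}(\mathbb{R}^{n};\mathbb{R}^{n})$: its support is $\mathrm{supp}(\phi)\times\mathbb{R}^{n-1}$, which is unbounded in the $w$ directions. The Gauss--Green formula recalled in the paper is stated only for compactly supported test fields, so the identity $\int_{E}\mathrm{div}\,T\,dx=\int_{\partial^{*}E}T\cdot\nu_{E}\,d\mathcal{H}^{n-1}$ is asserted without justification. The repair is short but must be supplied: for $R>0$ pick $\eta_{R}\in C_{c}^{1}(\mathbb{R}^{n-1})$ with $0\le\eta_{R}\le1$, $\eta_{R}\equiv1$ on $B^{n-1}(0,R)$ and $\mathrm{supp}\,\eta_{R}\subset B^{n-1}(0,2R)$, and set $T_{R}(z,w):=(\phi(z)\eta_{R}(w),0,\ldots,0)\in C_{c}^{1}(\mathbb{R}^{n};\mathbb{R}^{n})$. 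Since the $\mathbb{R}^{n-1}$ components of $T_{R}$ vanish identically, $\mathrm{div}\,T_{R}(z,w)=\phi'(z)\eta_{R}(w)$ with no $\nabla_{w}\eta_{R}$ contribution, and Gauss--Green gives
\begin{equation*}
\int_{E}\phi'(z)\eta_{R}(w)\,dx \;=\; \int_{\partial^{*}E}\phi(z)\eta_{R}(w)\,(\nu_{E})_{1}(z,w)\,d\mathcal{H}^{n-1}(z,w).
\end{equation*}
Letting $R\to\infty$ and using dominated convergence on both sides (legitimate since $\mathcal{H}^{n}(E)<\infty$ controls the left-hand side and $P(E)<\infty$ the right) recovers $\int_{\mathbb{R}}\ell(z)\phi'(z)\,dz=\int_{\partial^{*}E}\phi(z)\,(\nu_{E})_{1}\,d\mathcal{H}^{n-1}$, whence $|D\ell|(\mathbb{R})\le P(E)$ as you claimed. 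With this cutoff argument inserted, your proof is complete.
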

We present the following auxiliary inequality, which is a special case of \cite[Proposition~3.4]{fusco2013stability}.
\begin{proposition}
Let $\ell :\mathbb{R} \rightarrow [0, \infty)$ be a measurable function, such that $F_{\ell}$ is a set of finite perimeter and finite volume. Let $E \subset \mathbb{R}^{n}$ be an $\ell$-distributed set and let $f: \mathbb{R} \rightarrow [0,\infty]$ be a Borel measurable function. Then
\begin{equation}\label{perimeter_formula}
\int_{\partial^{*}E} f (z) \  d \mathcal{H}^{n-1}(x) \geq \int_{\mathbb{R}} f(z) \sqrt{ (\mathcal{H}^{n-2} (\partial^{*}  E_{z}))^2 + |\nabla \ell (z)|^{2} } \ dz + \int_{\mathbb{R}} f(z) \ d | D^{s} \ell| (z),
\end{equation}
Moreover, if $E = F_{\ell},$ the equality holds in \eqref{perimeter_formula}. 
\end{proposition}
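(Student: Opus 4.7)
The plan is to decompose the reduced boundary of $E$ into a ``non-vertical'' part $A := \partial^* E \cap \{\nu_w^E \neq 0\}$ and a ``vertical'' part $B := \partial^* E \cap \{\nu_w^E = 0\}$, and to match the contributions of $A$ and $B$ with the two terms on the right-hand side of \eqref{perimeter_formula}. Since the tangential Jacobian of the projection $\pi(z,w) = z$ on the $(n-1)$-rectifiable set $\partial^* E$ equals $|\nu_w^E(z,w)|$, the coarea formula together with Vol'pert's identification $(\partial^* E)_z = \partial^* E_z$ for $\mathcal{H}^1$-a.e.\ $z$ gives the slicing identity
\begin{equation*}
\int_A f(z)\, d\mathcal{H}^{n-1}(x) \;=\; \int_{\mathbb{R}} f(z) \int_{\partial^* E_z} \frac{1}{|\nu_w^E(z,w)|}\, d\mathcal{H}^{n-2}(w)\, dz.
\end{equation*}

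The inner integrand connects to $\nabla \ell$ via the push-forward formula $D\ell = \pi_\#(\nu_z^E\, \mathcal{H}^{n-1} \mres \partial^* E)$, which I would obtain by testing $D\chi_E$ against vector fields of the form $X(z,w) = (\phi(z), 0, \ldots, 0)$, $\phi \in C_c^1(\mathbb{R})$, and using Fubini. Splitting this push-forward along $A$ and $B$, a second application of coarea shows that the $A$-contribution is absolutely continuous with density $\nabla \ell(z) = \int_{\partial^* E_z}(\nu_z^E/|\nu_w^E|)\, d\mathcal{H}^{n-2}$, whereas the $B$-contribution is $\mathcal{L}^1$-singular because the approximate tangent planes to $B$ are orthogonal to the $z$-axis, so $\pi(B)$ is $\mathcal{L}^1$-negligible. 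Using $1/|\nu_w^E| = \sqrt{1 + (\nu_z^E/|\nu_w^E|)^2}$ (from $|\nu^E|=1$) and the triangle inequality for integrals of vector-valued maps, applied to $w \mapsto (1, \nu_z^E/|\nu_w^E|)$, then yields the slice-by-slice estimate
\begin{equation*}
\int_{\partial^* E_z} \frac{1}{|\nu_w^E|}\, d\mathcal{H}^{n-2} \;\geq\; \sqrt{\mathcal{H}^{n-2}(\partial^* E_z)^2 + |\nabla \ell(z)|^2}.
\end{equation*}
On $B$ we have $|\nu_z^E| \equiv 1$, and dominating $\pi_\#(\nu_z^E\, \mathcal{H}^{n-1}\mres B)$ by $\pi_\#(\mathcal{H}^{n-1}\mres B)$ in total variation gives $\int f\, d|D^s\ell| \leq \int_B f(z)\, d\mathcal{H}^{n-1}$. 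Multiplying the slice estimate by $f(z)$, integrating in $z$ and summing with the vertical bound yields \eqref{perimeter_formula}.

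For the equality case $E = F_\ell$, each slice $\partial^*(F_\ell)_z$ is the sphere $\partial B^{n-1}(0, r_\ell(z))$, on which both $|\nu_w^E|$ and the sign of $\nu_z^E$ are constant (depending only on $r_\ell$ and $r_\ell'$), so $(1, \nu_z^E/|\nu_w^E|)$ has constant direction along each slice---precisely the equality case of the triangle inequality---and the push-forward on $B$ carries no sign cancellation, saturating the bound on $|D^s\ell|$. The step I expect to be most delicate is the clean $A/B$-splitting of the Radon--Nikodym decomposition of $D\ell$, in particular showing that $\pi_\#(\nu_z^E\, \mathcal{H}^{n-1}\mres B)$ is $\mathcal{L}^1$-singular; this rests on the rectifiable structure of $B$ and the horizontality of its approximate tangent planes.
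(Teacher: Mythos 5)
The paper does not prove this proposition itself: it is quoted as a special case of \cite[Proposition 3.4]{fusco2013stability}, so there is no in-paper proof to compare against. Your outline follows the route used in that reference and in the codimension-one Steiner literature (coarea on $\partial^*E$ with factor $|\nu^E_w|$, Vol'pert slicing, the push-forward identity for $D\ell$, and Jensen's inequality applied to the vector $(1,\nu^E_z/|\nu^E_w|)$ on each slice), so the strategy is the right one.

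The step you rightly flag --- that $\mu_B:=\pi_\#\bigl(\nu^E_z\,\mathcal{H}^{n-1}\mres B\bigr)$ is $\mathcal{L}^1$-singular --- is the delicate one, and your heuristic (tangent planes of $B$ orthogonal to the $z$-axis, hence $\pi(B)$ is null) is less innocent than it looks: after covering $B$ by Lipschitz graphs $w\mapsto(h_j(w),w)$ with $\nabla h_j=0$ a.e.\ on the relevant preimage, proving $\mathcal{L}^1(h_j(K_j))=0$ for $n\ge 3$ is a Morse--Sard-type statement for Lipschitz maps $\mathbb{R}^{n-1}\to\mathbb{R}$ and no longer follows from the one-variable area formula the way the $n=2$ case does. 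The good news is that the singularity of $\mu_B$ is not needed. Let $a(z):=\int_{\partial^*E_z}(\nu^E_z/|\nu^E_w|)\,d\mathcal{H}^{n-2}$ be the density of $\mu_A$ and split $\mu_B=b\,\mathcal{L}^1+\mu_B^s$ into its Lebesgue decomposition, so that $\nabla\ell=a+b$ and $D^s\ell=\mu_B^s$. Your coarea/Jensen bound gives
\[
\int_A f\,d\mathcal{H}^{n-1}\ \ge\ \int_{\mathbb{R}} f\,\sqrt{\bigl(\mathcal{H}^{n-2}(\partial^*E_z)\bigr)^2+a(z)^2}\,dz ,
\]
while $|\nu^E_z|\equiv1$ on $B$ gives
\[
\int_B f\,d\mathcal{H}^{n-1}\ \ge\ \int_{\mathbb{R}} f\,d|\mu_B|\ =\ \int_{\mathbb{R}} f\,|b|\,dz+\int_{\mathbb{R}} f\,d|D^s\ell| .
\]
Adding these and using the elementary inequality $\sqrt{c^2+a^2}+|b|\ge\sqrt{c^2+(a+b)^2}$ (the triangle inequality for $(c,a)$ and $(0,b)$ in $\mathbb{R}^2$, with $c=\mathcal{H}^{n-2}(\partial^*E_z)$) recovers \eqref{perimeter_formula} without ever deciding whether $\mu_B$ has an absolutely continuous part. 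Your treatment of the equality case for $E=F_\ell$ is correct: on a.e.\ slice the vector $(1,\nu^{F_\ell}_z/|\nu^{F_\ell}_w|)=(1,-\nabla r_\ell(z))$ is constant in $w$, and on the vertical part $\nu_z^{F_\ell}$ has constant sign, so every inequality used becomes an equality.
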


A straightforward consequence of the above result is the following.
\begin{corollary}\label{corollary_perimeter}
Let $\ell: \mathbb{R}\rightarrow [0, \infty)$ be a measurable function, such that $F_{\ell}$ is a set of finite perimeter and finite volume. Then
\begin{equation}\label{corollary_perimeter_formula}
P (F_{\ell}; B \times \mathbb{R}^{n-1}) = \int_{B} \sqrt{ (\mathcal{H}^{n-2} (\partial^{*} (F_{\ell})_{z}))^2 + |\nabla \ell (z)|^{2} } \ dz + | D^{s} \ell| (B), 
\end{equation}
for every Borel set $B \subset \mathbb{R}$.
\end{corollary}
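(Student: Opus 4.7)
The plan is to apply Proposition 2.3 directly in the equality case $E = F_\ell$, choosing the test function $f$ to be an indicator. More precisely, fix a Borel set $B \subset \mathbb{R}$ and set $f := \chi_B$. This $f$ is a Borel measurable function from $\mathbb{R}$ into $[0,\infty]$, so Proposition 2.3 is applicable with $E = F_\ell$, and in that case the inequality \eqref{perimeter_formula} becomes an equality:
\begin{equation*}
\int_{\partial^* F_\ell} \chi_B(z)\, d\mathcal{H}^{n-1}(x) = \int_{\mathbb{R}} \chi_B(z) \sqrt{(\mathcal{H}^{n-2}(\partial^* (F_\ell)_z))^2 + |\nabla \ell(z)|^2}\, dz + \int_{\mathbb{R}} \chi_B(z)\, d|D^s \ell|(z).
\end{equation*}

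Next, I would identify both sides with the quantities appearing in \eqref{corollary_perimeter_formula}. On the left, since $F_\ell$ is a set of locally finite perimeter, the perimeter measure restricted to the cylinder $B \times \mathbb{R}^{n-1}$ satisfies
\begin{equation*}
\int_{\partial^* F_\ell} \chi_B(z)\, d\mathcal{H}^{n-1}(x) = \mathcal{H}^{n-1}\bigl(\partial^* F_\ell \cap (B \times \mathbb{R}^{n-1})\bigr) = P(F_\ell; B \times \mathbb{R}^{n-1}),
\end{equation*}
where we used the representation $\mu_{F_\ell} = \nu_{F_\ell}\, \mathcal{H}^{n-1} \mres \partial^* F_\ell$ together with $P(F_\ell; G) = |\mu_{F_\ell}|(G)$ for every Borel set $G$. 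On the right, the first integral is immediately $\int_B \sqrt{(\mathcal{H}^{n-2}(\partial^* (F_\ell)_z))^2 + |\nabla \ell(z)|^2}\, dz$, while the second integral equals $|D^s \ell|(B)$ by the definition of the integral against a positive Radon measure.

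Combining these identifications yields \eqref{corollary_perimeter_formula}. There is no real obstacle here: the only thing that has to be checked is that the equality case of \eqref{perimeter_formula} indeed applies to a mere Borel (not necessarily continuous) test function such as $\chi_B$; this follows by a routine monotone class / approximation argument, as Proposition 2.3 is stated for all Borel $f : \mathbb{R} \to [0,\infty]$. Thus Corollary 2.4 follows at once from the equality part of Proposition 2.3.
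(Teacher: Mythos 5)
Your proof is correct and is exactly the argument the paper intends when it calls the corollary a ``straightforward consequence'' of Proposition 2.3: take $f = \chi_B$, apply the equality case $E = F_\ell$, and read off both sides. One small remark: your closing caveat about needing a ``monotone class / approximation argument'' is unnecessary, since Proposition 2.3 is already stated for \emph{all} Borel $f:\mathbb{R}\to[0,\infty]$, so $\chi_B$ is directly admissible and there is nothing to approximate.
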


For sake of completeness, we close this preliminary section by presenting the proof of Theorem 1.3 $(ii) \Longrightarrow (i)$.
\begin{proof}[Proof of Theorem 1.3 $(ii) \Longrightarrow (i)$]
Suppose that $(ii)$ holds. Since $\ell \in W^{1,1}(\mathring{J})$, by Proposition \ref{fuscoreg}, the condition \eqref{no flat parts symmetric} is satisfied with $\Omega = \mathring{J}$. In addition, since $J$ is one-dimensional, $\ell$ is absolute continuous in $\mathring{J}$ and therefore,
\[
\ell^{\wedge} (z) = \ell^{\vee}(z)  =\ell (z) >0 \mbox{ for all } z \in \mathring{J}.
\]
Thus, it turns out that \eqref{barchiesi_ell positive} is true. Therefore, due to Theorem 1.2, $(i)$ follows.
\end{proof}

\section{Proof of the Theorem 1.2 $(i) \Longrightarrow (ii)$} \label{results}
We start our analysis with the following lemma, which will be extensively used in the sequel.
\begin{lemma}  \label{densitylemma}
Let $\ell: \mathbb{R}\rightarrow [0, \infty)$ be a measurable function, such that $F_{\ell}$ is a set of finite perimeter and finite volume. Let also $r_{\ell}$ be  defined as in \eqref{aktina} and consider $\bar{z} \in \mathbb{R}.$ Then 
\begin{equation} \label{boundary slice}
( \partial^{*} F_{\ell})_{\bar{z}} =_{\mathcal{H}^{n-1}} B^{n-1} \left( 0 , r_{\ell}^{\vee}(\bar{z})\right) \backslash  B^{n-1} \left( 0 , r_{\ell}^{\wedge}(\bar{z})\right).
\end{equation}
\end{lemma}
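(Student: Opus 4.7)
The plan is to identify the slice $(\partial^*F_\ell)_{\bar z}$ by computing the density of $F_\ell$ at points of the hyperplane $\{z=\bar z\}$. First I reduce to an essential-boundary computation: by Federer's theorem \eqref{density_boundary}, $\partial^*F_\ell$ and $\partial^e F_\ell$ differ by an $\mathcal H^{n-1}$-null set, and the slice of any such null set at a fixed height $\bar z$ is $\mathcal H^{n-1}$-null in $\mathbb R^{n-1}$, because for $S\subseteq\mathbb R^n$ the $\mathcal H^{n-1}$-measure of $\{\bar z\}\times S_{\bar z}$ as a subset of $\mathbb R^n$ coincides with the $(n-1)$-dimensional Lebesgue measure of $S_{\bar z}$. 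Hence it suffices to prove \eqref{boundary slice} with $\partial^e F_\ell$ in place of $\partial^*F_\ell$.

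I split the hyperplane $\{z=\bar z\}$ into three regions according to $|\bar w|$ and establish: (a) if $|\bar w|<r_\ell^\wedge(\bar z)$, then $(\bar z,\bar w)\in F_\ell^{(1)}$; (b) if $|\bar w|>r_\ell^\vee(\bar z)$, then $(\bar z,\bar w)\in F_\ell^{(0)}$; (c) if $r_\ell^\wedge(\bar z)<|\bar w|<r_\ell^\vee(\bar z)$, then $(\bar z,\bar w)\in\partial^e F_\ell$. Since the spheres $\{|w|=r_\ell^\wedge(\bar z)\}$ and $\{|w|=r_\ell^\vee(\bar z)\}$ are $\mathcal H^{n-1}$-null in $\mathbb R^{n-1}$, these three assertions together yield \eqref{boundary slice}. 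Claim (a) is proved by fixing $t\in(|\bar w|,r_\ell^\wedge(\bar z))$: the characterisation \eqref{lower_approx} says that $\{r_\ell>t\}\subseteq\mathbb R$ has density one at $\bar z$, and for $\rho<t-|\bar w|$ every $(z,w)\in B_\rho((\bar z,\bar w))$ with $z\in\{r_\ell>t\}$ automatically satisfies $|w|\le|\bar w|+\rho<t<r_\ell(z)$, so $(z,w)\in F_\ell$; a Fubini estimate
$$
\omega_n\rho^n-\mathcal H^n(F_\ell\cap B_\rho((\bar z,\bar w)))\le\omega_{n-1}\rho^{n-1}\bigl|(\bar z-\rho,\bar z+\rho)\setminus\{r_\ell>t\}\bigr|=o(\rho^n)
$$
then yields $\theta(F_\ell,(\bar z,\bar w))=1$. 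Claim (b) is symmetric, using \eqref{upper_approx} and $\{r_\ell<t\}$ with $t\in(r_\ell^\vee(\bar z),|\bar w|)$.

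For (c), I use that $\ell\in BV(\mathbb R)$ by Proposition \ref{ell_regular}; since $r_\ell$ is a continuous monotone function of $\ell$, it admits one-sided limits at every point, and \eqref{lower_approx}--\eqref{upper_approx} imply $r_\ell^\wedge(\bar z)=\min\{r_\ell(\bar z^-),r_\ell(\bar z^+)\}$ and $r_\ell^\vee(\bar z)=\max\{r_\ell(\bar z^-),r_\ell(\bar z^+)\}$. The hypothesis on $|\bar w|$ forces, up to the reflection $z\mapsto-z$, the strict inequalities $r_\ell(\bar z^-)<|\bar w|<r_\ell(\bar z^+)$. One-sided approximate continuity of BV functions in one variable then provides $\delta>0$ and, for all small $\rho>0$, subsets $T_\rho^+\subseteq(0,\rho)$ and $T_\rho^-\subseteq(-\rho,0)$ of measure $\rho(1-o(1))$ on which $r_\ell(\bar z+\cdot)>|\bar w|+\delta$ and $r_\ell(\bar z+\cdot)<|\bar w|-\delta$, respectively. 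Slicing $B_\rho((\bar z,\bar w))$ in $t=z-\bar z$ and arguing as in (a) on $T_\rho^+$ (the full $(n-1)$-ball slice lies in $F_\ell$) and as in (b) on $T_\rho^-$ (the slice is disjoint from $F_\ell$) produces $\theta_*(F_\ell,(\bar z,\bar w))\ge\tfrac12$ and $\theta^*(F_\ell,(\bar z,\bar w))\le\tfrac12$, so the density equals $\tfrac12$ and $(\bar z,\bar w)\in\partial^e F_\ell$.

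The main technical obstacle is step (c): combining the one-sided approximate continuity of the BV function $r_\ell$ with a two-half-ball Fubini estimate in order to rule out simultaneously density $0$ and density $1$ at $(\bar z,\bar w)$. Steps (a) and (b) are routine density computations once the correct one-dimensional level set has been selected using \eqref{lower_approx} or \eqref{upper_approx}, and the Federer reduction above is standard.
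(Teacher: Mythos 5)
Your proof is correct, and it differs from the paper's at the crucial second step. Claims~(a) and~(b), together with the Federer reduction from $\partial^*F_\ell$ to $\partial^e F_\ell$, coincide in substance with Step~1 of the paper's argument: both establish density~$1$ inside $B^{n-1}(0,r_\ell^\wedge(\bar z))$ and density~$0$ outside $\overline{B^{n-1}(0,r_\ell^\vee(\bar z))}$ by a one-sided Fubini estimate keyed to \eqref{lower_approx}--\eqref{upper_approx}, giving the inclusion of the slice in the closed annulus. The divergence is in how the reverse inclusion is obtained. The paper applies the slicing perimeter formula of Corollary~\ref{corollary_perimeter} with $B=\{\bar z\}$ to compute $\mathcal H^{n-1}((\partial^*F_\ell)_{\bar z}) = \ell^\vee(\bar z)-\ell^\wedge(\bar z)$ directly; since this equals the $\mathcal H^{n-1}$-measure of the annulus, the inclusion from Step~1 must be saturated up to null sets, with no need to look at any further density. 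You instead work pointwise on the open annulus: using that $\ell\in BV(\mathbb R)$ (Proposition~\ref{ell_regular}), the 1D identities $r_\ell^\wedge=\min\{r_\ell(\bar z^-),r_\ell(\bar z^+)\}$, $r_\ell^\vee=\max\{r_\ell(\bar z^-),r_\ell(\bar z^+)\}$, and the one-sided approximate limits, you pin the density of $F_\ell$ at $(\bar z,w)$ to exactly $\tfrac12$ by a two-half-ball Fubini estimate, so the whole open annulus sits in $F_\ell^{(1/2)}\subset\partial^e F_\ell$. Both routes are sound; the paper's is shorter once Corollary~\ref{corollary_perimeter} (which rests on the nontrivial slicing formula of \cite[Proposition~3.4]{fusco2013stability}) is available, whereas yours avoids the slicing formula entirely at the cost of invoking more one-dimensional BV structure, and incidentally yields the slightly sharper pointwise statement that the open annulus consists of density-$\tfrac12$ points rather than merely essential-boundary points.
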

\begin{proof}
The proof is divided into two steps. 

\vspace{.3cm}

\noindent
\textbf{Step 1:} We prove
\begin{equation} \label{inclusion1}
( \partial^{*} F_{\ell})_{\bar{z}} \subset \overline{B^{n-1} \left( 0 , r_{\ell}^{\vee}(\bar{z})\right)} \backslash  B^{n-1} \left( 0 , r_{\ell}^{\wedge}(\bar{z})\right).
\end{equation}
To this end, it is enough to prove that

\begin{subequations}
\begin{equation}
    r_{\ell}^{\wedge}(\bar{z})  \leq |w| \quad \mbox{ for every } w \in (\partial^{*} F_{\ell})_{\bar{z}} \label{lower claim_lemma}
\end{equation}
\mbox{and}
\begin{equation}
 r_{\ell}^{\vee}(\bar{z}) \geq |w| \quad \mbox{ for every } w \in (\partial^{*} F_{\ell})_{\bar{z}}. \label{upper claim_lemma}
\end{equation}
\end{subequations}

First, we prove \eqref{lower claim_lemma}. To achieve that, we observe that \eqref{lower claim_lemma} will follow by proving the implication: 

\begin{equation*} \label{density_claim_lemma}
 r_{\ell}^{\wedge}(\bar{z}) >  |w| \Longrightarrow (\bar{z},w)  \in F_{\ell}^{(1)},   
\end{equation*}
or equivalently,
\begin{equation*} \label{density_claim_lemma}
 r_{\ell}^{\wedge}(\bar{z}) >  |w| \Longrightarrow (\bar{z},w)  \in (\mathbb{R}^{n} \backslash F_{\ell})^{(0)}.  
\end{equation*}

To this aim, let  $w \in \mathbb{R}^{n-1}$ be such that $r_{\ell}^{\wedge} (\bar{z}) > |w|$, and let  $\delta >0$ be such that

\[
 r_{\ell}^{\wedge}(\bar{z}) = |w| + \delta.
\]

Let now  $\bar{\rho} \in (0, \frac{\delta}{2}]$. Then,
\[
|w-w'| <  \bar{\rho} \leq  \frac{\delta}{2} \qquad \mbox{ for every } (z',w') \in B_{\bar{\rho}} \left(  (\bar{z},w)) \right).
\]
By virtue of triangle inequality, we have 

\[
r_{\ell}^{\wedge}(\bar{z})  = |w| + \delta \geq |w'| -|w-w'| + \delta > |w'| + \frac{ \delta}{2},
\]
so that 
\begin{equation} \label{latter inequality}
r_{\ell}^{\wedge}(\bar{z}) - \frac{\delta}{2} > |w'| \quad \mbox{ for every } (z',w') \in B_{\bar{\rho}} \left(  (\bar{z},w)) \right).
\end{equation}

\tdplotsetmaincoords{85}{113} % rotation of axis

\begin{figure}[ht]
\centering
\resizebox{0.99\textwidth}{!}
		{

 \begin{tikzpicture}
	    [tdplot_main_coords,
	    cube/.style={very thin,black},
	    grid/.style={very thin,gray },
	    axis/.style={->,black,thick, dotted}]
        %------------------
       % \usetikzlibrary{decorations.pathreplacing,calligraphy}  %library for curly brackets
        % draw and label 3d axes

        \draw[axis] (0,-2,0) -- (0,7,0) node[anchor=west]{\scalebox{0.8}{$z$}}; 
        \draw[axis] (-2,0,0) -- (6,0,0)   node[anchor=east]{\scalebox{0.8}{$w_{1}$}};
        \draw[axis] (0,0,-2) -- (0,0,4) node[anchor=west]{\scalebox{0.8}{$w_{2}$}};
        %-------------------

        %draw first circle
        \begin{scope}[canvas is xz plane at y=1.2]
        
        	\draw[fill=gray!15, opacity=0.7](1.5,0) arc[x radius=1.5, y radius=1.5, start angle=0, end angle=90];
	
	        \draw[fill=gray!15, opacity=0.7] (1.5,0) arc[x radius=1.5, y radius=1.5, start angle=0, end angle=-90];
	        
	        \draw [dotted,fill=gray!15, opacity=0.7] (1.5,0) arc[x radius=1.5, y radius=1.5, start angle=0, end angle=180];
	
	        \draw [dotted,fill=gray!15, opacity=0.7] (1.5,0) arc[x radius=1.5, y radius=1.5, start angle=0, end angle=-180];
        \end{scope}

        %-------------------
        %connection between first and second big circle 
        
        \draw[-](0,1.2,1.5)-- (0,3.7,2.3);
        
        \draw[-](0,1.1,-1.5)-- (0,3.5,-2.3);
         %-------------------

        % draw second  big circle
        \begin{scope}[canvas is xz plane at y=3.7]
	        
	        \draw[fill=gray!10, opacity=0.7] (0,0) circle (2.3);
	
        \end{scope}
         %-------------------
         
         %draw the hyperplane at y=3.7
   %      \begin{scope}[ rotate around z=14,canvas is zx plane at y=3.7]
         
      %    {\rotatebox{-1}{  \fill[dashed,red!15, opacity = 0.9] (-2.6, -9) rectangle (3, 12); }}
            
         %\end{scope}
         %-------------------
         
         %draw r_ell up and down nodes + their points
        \begin{scope}[canvas is zx plane at y=3.7]

            \draw[ fill=blue!10, opacity=0.7] (0,0) circle (2.3);

            \node at (1.45,0.9) {\color{blue}\scalebox{0.8}{$r_{\ell}^{\vee}$}\color{black}} ;
            
            \node at (-0.55,-1.5) {\color{red}\scalebox{0.8}{$r_{\ell}^{\wedge}$}\color{black}} ;
            
            \draw (2.295,0)  node[fill,circle,scale=0.25]{} ;
            
            \draw (-0.8,0)  node[fill,circle,scale=0.25]{} ;

        \end{scope}
        %-------------------

        % draw second small circle, zeta node, zeta point and 
        \begin{scope}[canvas is xz plane at y=3.7]

	        \draw[fill=white, opacity=0.7 ](0.8,0) arc[x radius=0.8, y radius=0.8, start angle=0, end angle=90];
	        
	        \draw[fill=white, opacity=0.7](0.8,0) arc[x radius=0.8, y radius=0.8, start angle=0, end angle=-90];
	        
	        \draw [dotted,fill=white, opacity=0.7] (0.8,0) arc[x radius=0.8, y radius=0.8, start angle=0, end angle=180];
	    
	        \draw [dotted,fill=white, opacity=0.7](0.8,0) arc[x radius=0.8, y radius=0.8, start angle=0, end angle=-180];
	        
            \node at (0.45,0.25) {\scalebox{0.8}{$\bar{z}$}} ;

        \end{scope}
        %-------------------
        
        % draw r_ell up and down lines + curly brackets + delta
        \begin{scope}[canvas is zx plane at y=3.7]
            
            \draw[dashed,red] (0,0)--(-0.8,0);  % line for r ell down
            
            \draw[dashed,blue!85] (0,0)--(2.3,0); % line for r ell up
            
            \draw [decorate,decoration = {brace, mirror,raise=1pt}] (0.8,0) --  (2.3,0);
	        
	        \draw (0.8,0) node[fill,circle,scale=0.25]{};
		    
		    \node at (1.56,-0.75) {\scalebox{0.8}{$\delta$}} ;
	
        \end{scope}
         %------------------

         % draw {z}x R^{2} node
        
      %   \node at (3,5.9,3) {\scalebox{0.8}{$\{ \bar{z} \} \times \mathbb{R}^{2}$}};
         %------------------

        % draw connections between small and last circle
        
        \draw[] (0,3.7,0.8)-- (0,5.5,1.2);
        
        \draw[] (0,3.7,-0.8)-- (0,5.4,-1.2);
         %-------------------

        % draw last circle
        \begin{scope}[canvas is xz plane at y=5.5]
	        
	        \draw[fill=gray!20, opacity=0.5]  (0,0) circle (1.2);
	
        \end{scope}	    
         %-------------------
         
        % draw  zeta node, zeta point and r_ell down node
        \begin{scope}[canvas is xz plane at y=3.7]
         
            \draw (0,-0.8)  node[fill,circle,scale=0.25]{} ;
         
            \draw  node[fill,circle,scale=0.25]{} (0,1);
            
            \node at (0.45,0.25) {\scalebox{0.8}{$\bar{z}$}} ;
            
        \end{scope}  
        %-------------------

\end{tikzpicture}

}
 \caption{A graphical illustration of Step 1 for $n=3$.
} 
\label{symmetral fig_label}

\end{figure}

Now, thanks to  \eqref{latter inequality} and the definition of $F_\ell$  \color{black}, we have 
\[
( \mathbb{R}^{n} \backslash F_\ell ) \cap B_{\bar{\rho}} \left(  (\bar{z},w) \right) \subset \left\{ (z',w') \in \mathbb{R} \times \mathbb{R}^{n-1}: r_{\ell}^{\wedge}(\bar{z}) - \frac{\delta}{2} > |w'| \geq r_{\ell}(z') \right\} \cap B_{\bar{\rho}} \left(  (\bar{z},w) \right).
\]

Hence, for every $\rho \in (0, \bar{\rho})$, we have 

\begin{align*}
	\mathcal{H}^{n} ( (\mathbb{R}^{n} \backslash F_\ell) \cap B_{\rho} \left(  (\bar{z},w) ) \right)
	&= \int_{\bar{z} - \rho}^{\bar{z} + \rho} \mathcal{H}^{n-1} ( (\mathbb{R}^{n} \backslash F_\ell ) \cap B_{\rho} ( (\bar{z},w)) \cap \{ z = \zeta\} ) \ d \zeta \\ 
	& \leq \int_{\bar{z} - \rho}^{\bar{z} + \rho} \chi_{ \left\{ r_{\ell} < r_{\ell}^{\wedge}(\bar{z}) - \frac{\delta}{2} \right\}} (\zeta)  \mathcal{H}^{n-1}  ( (\mathbb{R}^{n} \backslash F_\ell )  \cap B_{\rho}((\bar{z},w)) \cap \{z = \zeta\}) \ d \zeta \\
	&= \int_{ ( \bar{z} - \rho, \bar{z} + \rho) \cap \left\{ r_{\ell} < r_{\ell}^{\wedge}(\bar{z}) - \frac{\delta}{2}  \right\}} \mathcal{H}^{n-1} ( (\mathbb{R}^{n} \backslash F_\ell )\cap B_{\rho}(( \bar{z},w)) \cap \{z = \zeta\})  \  d \zeta.
\end{align*}
Now, for $\rho \in (0, \bar{\rho})$, we note that, 
\[
B_{\rho} ( (\bar{z},w)) \cap \{ z = \zeta\} \subset B^{n-1} ( w, \rho) \quad \mbox{ for every } \zeta \in ( \bar{z} - \rho, \bar{z} + \rho).
\]
Therefore, for $\rho \in (0, \bar{\rho})$  we obtain
\begin{align*}
	&\mathcal{H}^{n} ( (\mathbb{R}^{n} \backslash F_\ell ) \cap B_{\rho} ( (\bar{z},w)))  \\ 
	&\leq \int_{ ( \bar{z} - \rho, \bar{z} + \rho) \cap \left\{  r_{\ell} < r_{\ell}^{\wedge}(\bar{z}) - \frac{\delta}{2}  \right\}}  \mathcal{H}^{n-1} (B^{n-1} ( w, \rho )  )\  d \zeta\\
	&= \omega_{n-1} \rho^{n-1}   \int_{ ( \bar{z} - \rho, \bar{z} + \rho) \cap \left\{  r_{\ell} < r_{\ell}^{\wedge}(\bar{z}) - \frac{\delta}{2}  \right\}} 1 \ d \zeta  \\
	&= \omega_{n-1} \rho^{n-1} \  \mathcal{H}^{1} \left(( \bar{z} - \rho, \bar{z} + \rho) \cap \left\{  r_{\ell} < r_{\ell}^{\wedge}(\bar{z}) - \frac{\delta}{2}   \right\} \right).
\end{align*}

Finally, we have
\begin{align*}
	&\lim_{ \rho \rightarrow 0^{+}} \frac{ \mathcal{H}^{n} ( (\mathbb{R}^{n} \backslash F_\ell) \cap B_{\rho}( (\bar{z},w)))}{\omega_{n} \rho^{n} } \\
	&\leq  \frac{ \omega_{n-1}}{\omega_{n}} \lim_{ \rho \rightarrow 0^{+}} \frac{ \mathcal{H}^{1} \left(( \bar{z} - \rho, \bar{z} + \rho) \cap \left\{  r_{\ell} < r_{\ell}^{\wedge}(\bar{z}) - \frac{\delta}{2} \right\} \right)}{ \rho} \\
	&=0,
\end{align*}
where in the last equality we make use of the definition of $r_{\ell}^{\wedge}(\bar{z})$, see \eqref{lower_approx}. This shows \eqref{lower claim_lemma}. Employing an analogous  argument, it can be shown that 

\begin{equation*} \label{density_claim_lemma}
 r_{\ell}^{\vee}(\bar{z}) < |w| \Longrightarrow (\bar{z},w)  \in F_{\ell}^{(0)}.   
\end{equation*}
This implies \eqref{upper claim_lemma}, and finally proves \eqref{inclusion1}.

\vspace{.3cm}

\noindent
\textbf{Step 2:} We conclude the proof. We first observe that by Corollary \ref{corollary_perimeter} with  $B=\{\bar{z}\}$, we obtain
\begin{align*}
\mathcal{H}^{n-1} \left( ( \partial^{*} F_{\ell})_{\bar{z}} \right)
&= \mathcal{H}^{n-1} ( \partial^{*} F_{\ell} \cap \{ z= \bar{z} \} ) \\
&= \mathcal{H}^{n-1} ( \partial^{*} F_{\ell} \cap ( \{ \bar{z} \} \times \mathbb{R}^{n-1}))  \\
&= P (F_{\ell};  \{ \bar{z} \} \times \mathbb{R}^{n-1})\\ 
&= 	\ell^{\vee}(\bar{z}) - \ell^{\wedge}(\bar{z}) \\
&=  \mathcal{H}^{n-1} \left(\overline{ B^{n-1} \left(0, r_{\ell}^{\vee}(\bar{z})\right)} \right)- \mathcal{H}^{n-1} \left( B^{n-1} \left( 0, r_{\ell}^{\wedge}(\bar{z})\right)\right) \\
&= \mathcal{H}^{n-1} \left(\overline{ B^{n-1} \left( 0, r_{\ell}^{\vee}(\bar{z})\right)} \big\backslash 
\left( B^{n-1} \left( ( 0, r_{\ell}^{\wedge}(\bar{z})\right)\right) \right).
\end{align*}

Finally, recalling that, by Step 1

\[
( \partial^{*} F_{\ell})_{\bar{z}} \subset \overline{B^{n-1} \left( 0 , r_{\ell}^{\vee}(\bar{z})\right)} \backslash  B^{n-1} \left( 0 , r_{\ell}^{\wedge}(\bar{z})\right),
\]
we obtain

\begin{align*}
(\partial^{*} F_{\ell} )_{\bar{z}}&=_{\mathcal{H}^{n-1}}  \overline{B^{n-1} \left( 0 , r_{\ell}^{\vee}(\bar{z})\right)} \backslash  B^{n-1} \left( 0 , r_{\ell}^{\wedge}(\bar{z})\right) \\
&=_{\mathcal{H}^{n-1}} B^{n-1} \left( 0, r_{\ell}^{\vee}(\bar{z})\right) \backslash  B^{n-1} \left( 0 , r_{\ell}^{\wedge}(\bar{z})\right),
\end{align*}

which concludes the proof.

\end{proof}

Now, we can  show  that if the set $\{\ell^{\wedge} >0 \}$ fails to be a (possibly unbounded) interval, then rigidity is violated.

\begin{proposition} \label{proposition interval}
Let $\ell: \mathbb{R}\rightarrow [0, \infty)$ be a measurable function,  such that $F_{\ell}$ is a set of finite perimeter and finite volume, and let  $r_{\ell}$ be defined as  in \eqref{aktina}. Suppose that the set $\{ \ell^{\wedge} >0 \}$ is not an interval. That is, suppose that there exists  $\bar{z} \in \{ \ell^{\wedge}=0\} $ such that 
\[
(- \infty, \bar{z} ) \cap \{ \ell^{\wedge} >0 \} \neq \emptyset \quad \mbox{and} \quad (\bar{z}, + \infty) \cap \{ \ell^{\wedge} >0 \} \neq \emptyset.
\]
Then, rigidity is violated. More precisely, setting $E_{1}:= F_{\ell} \cap \{ z < \bar{z} \}$ and $E_{2} = F_{\ell} \backslash E_{1},$ then
\[
E:=E_{1} \cup ( (0, \tau) + E_{2} ) \in \mathcal{K} (\ell) \quad \mbox{ for every } \tau \in \mathbb{R}^{n-1}.
\]
\end{proposition}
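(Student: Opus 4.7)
The plan is to verify that $E$ is $\ell$-distributed and that $P(E)=P(F_\ell)$, which gives $E\in\mathcal{K}(\ell)$, and then to observe that for $\tau\neq 0$ this $E$ cannot be equivalent to any single translate of $F_\ell$. Indeed, by hypothesis we can pick $z_1<\bar z$ and $z_2>\bar z$ with $\ell^\wedge(z_i)>0$; then $E_{z_1}$ is a nontrivial ball centred at $0$ while $E_{z_2}$ is a nontrivial ball centred at $\tau$, which is incompatible with a single common translation, so \eqref{class rigidity} fails.

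That $E$ is $\ell$-distributed is immediate from the slice description $E_z=(F_\ell)_z$ for $z<\bar z$ and $E_z=(F_\ell)_z+\tau$ for $z\geq\bar z$, together with translation invariance of $\mathcal H^{n-1}$. For the perimeter I decompose
\[
P(E)=P\bigl(E;(-\infty,\bar z)\times\mathbb R^{n-1}\bigr)+P\bigl(E;\{\bar z\}\times\mathbb R^{n-1}\bigr)+P\bigl(E;(\bar z,+\infty)\times\mathbb R^{n-1}\bigr),
\]
and similarly for $F_\ell$. On the two open half-spaces, $E$ coincides with $F_\ell$ (resp.\ with $(0,\tau)+F_\ell$), so by translation invariance of perimeter the corresponding contributions coincide with those of $F_\ell$. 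Applying Corollary \ref{corollary_perimeter} with $B=\{\bar z\}$ and using the hypothesis $\ell^\wedge(\bar z)=0$ gives $P(F_\ell;\{\bar z\}\times\mathbb R^{n-1})=\ell^\vee(\bar z)$. Thus the whole argument reduces to proving $P(E;\{\bar z\}\times\mathbb R^{n-1})=\ell^\vee(\bar z)$.

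To establish this identity I plan to adapt the density computation from Step~1 of the proof of Lemma \ref{densitylemma}. Since $\ell\in BV(\mathbb R)$ is one-dimensional, it has one-sided approximate limits $\ell(\bar z^\pm)$ with $\min\{\ell(\bar z^-),\ell(\bar z^+)\}=\ell^\wedge(\bar z)=0$, so at least one of them vanishes; by symmetry assume $\ell(\bar z^-)=0$. Then $r_\ell(z)\to 0$ approximately as $z\uparrow\bar z$, and the Fubini/density estimate used in Lemma \ref{densitylemma} yields $\theta(E_1,(\bar z,w))=0$ for every $w\neq 0$. For the translated piece $(0,\tau)+E_2$ one has $r_\ell(z)\to r_\ell^\vee(\bar z)$ approximately as $z\downarrow\bar z$, and the same argument (applied with centre $\tau$ in place of $0$) gives $\theta((0,\tau)+E_2,(\bar z,w))=\tfrac12$ when $|w-\tau|<r_\ell^\vee(\bar z)$ and $0$ when $|w-\tau|>r_\ell^\vee(\bar z)$. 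Since $E_1$ and $(0,\tau)+E_2$ are disjoint, summing their densities identifies
\[
\partial^e E\cap\bigl(\{\bar z\}\times\mathbb R^{n-1}\bigr)=_{\mathcal H^{n-1}}\{\bar z\}\times B^{n-1}(\tau,r_\ell^\vee(\bar z)),
\]
whose $(n-1)$-dimensional measure is $\omega_{n-1}(r_\ell^\vee(\bar z))^{n-1}=\ell^\vee(\bar z)$. By Federer's theorem $P(E;\{\bar z\}\times\mathbb R^{n-1})=\ell^\vee(\bar z)$, completing the proof.

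The main obstacle is this density analysis on the hyperplane. The crucial geometric role of the hypothesis $\ell^\wedge(\bar z)=0$ is that it forces one of the two one-sided slice traces of $F_\ell$ at $\bar z$ to be essentially empty, so translating only the other side along $\tau$ cannot create new essential boundary on $\{\bar z\}\times\mathbb R^{n-1}$, and the cut-and-translate yields exactly the same perimeter contribution as $F_\ell$. If both one-sided limits were positive, the two traces would in general fail to align across the cut and one would pick up extra perimeter of order $\min\{\ell(\bar z^-),\ell(\bar z^+)\}>0$, so the counterexample would fail — which is consistent with the fact that the hypothesis $\ell^\wedge(\bar z)=0$ is exactly what lets us place the jump of the barycenter at $\bar z$ without cost.
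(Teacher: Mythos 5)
Your proposal is correct, and its overall architecture mirrors the paper's: decompose $P(E)$ over $\{z<\bar z\}$, $\{z=\bar z\}$, $\{z>\bar z\}$, use translation invariance on the two open half-spaces, and reduce to a density analysis on the hyperplane $\{z=\bar z\}$. The difference is in how the hyperplane term is handled. The paper proves only a one-sided \emph{inclusion} $(\partial^* E)_{\bar z}\subset\overline{B^{n-1}(0,r_\ell^\vee(\bar z))}\cup\{\tau\}$ (via two density-zero statements, one for $E_1$ and one for $(0,\tau)+E_2$), obtains $P(E;\{z=\bar z\})\le P(F_\ell;\{z=\bar z\})$ from that plus Lemma \ref{densitylemma}, and then closes with a sandwich: the global perimeter inequality $P(F_\ell)\le P(E)$, combined with equality of the two open-half-space contributions, supplies the reverse inequality for free. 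You instead compute $\partial^e E\cap\{z=\bar z\}$ \emph{exactly}, which requires the additional claim $\theta((0,\tau)+E_2,(\bar z,w))=\tfrac12$ inside $B^{n-1}(\tau,r_\ell^\vee(\bar z))$ (a claim the paper never needs), and then read off the perimeter via Federer's theorem without invoking \eqref{perimeter inequality} at the end. Both routes work; yours is self-contained on the hyperplane but costs one extra half-density estimate, while the paper's is leaner because the perimeter inequality does half the work. Two minor points you should make explicit if you write this up: (a) the step ``$\theta=\tfrac12$'' is not literally ``the same argument'' as Lemma \ref{densitylemma} Step~1 (which only produces density~$0$/$1$ statements) — you need to combine $\theta(\{z>\bar z\},\cdot)=\tfrac12$ with the Fubini estimate showing the deficit has density~$0$; and (b) the exceptional set $\{0,\tau\}\cup\partial B^{n-1}(\tau,r_\ell^\vee(\bar z))$ on which your density dichotomy is silent has $\mathcal{H}^{n-1}$-measure zero in $\mathbb{R}^{n-1}$, which is what lets you conclude via Federer.
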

\begin{proof}
Let $E_{1}, \ E_{2}$ and $E$ be as in the statement. Let also $\tau \in \mathbb{R}^{n-1}$.
First of all, note that, since $\{z < \bar{z}\}$ is open and $E \cap \{z < \bar{z}\}= F_{\ell} \cap \{z < \bar{z}\}$, we have
\[
E^{(s)} \cap \{z < \bar{z}\} = \left( E \cap \{z < \bar{z}\} \right)^{(s)} = 
\left( F_{\ell} \cap \{z < \bar{z}\} \right)^{(s)}=
F_{\ell}^{(s)} \cap \{z < \bar{z}\}.
\]
for every $s \in[0,1].$ In accordance of that, we infer 
\begin{equation} \label{eq1}
\partial^{*} E \cap \{z < \bar{z}\} = \partial^{*} F_{\ell} \cap \{z < \bar{z}\}.
\end{equation}
In the same fashion, for every $\tau \in \mathbb{R}^{n-1}$, we obtain
\begin{align}
	\partial^{*} E  \cap \{z > \bar{z}\} &= \partial^{*} \left((0, \tau) + F_{\ell} \right)\cap \{z > \bar{z}\} \nonumber \\
	&= \left( (0,\tau) + \partial^{*} F_{\ell} \right) \cap \left( (0,\tau) +  \{z >\bar{z} \} \right) \nonumber \\
	&=(0,\tau)+ \left( \partial^{*} F_{\ell}\cap \{ z > \bar{z}\} \right). \label{eq2}
\end{align}

Hence, due to \eqref{eq1} and \eqref{eq2}, we have
\begin{align*}
P(E) &= \mathcal{H}^{n-1} ( \partial^{*} E \cap \{z < \bar{z}\}) + \mathcal{H}^{n-1} (\partial^{*}E \cap \{z=\bar{z} \}) + \mathcal{H}^{n-1} (\partial^{*} E\cap \{z > \bar{z}\}) \\
&=  \mathcal{H}^{n-1} ( \partial^{*}F_{\ell} \cap \{z < \bar{z}\}) + \mathcal{H}^{n-1} (\partial^{*}E \cap \{z=\bar{z} \}) \\
&+ \mathcal{H}^{n-1} \left( (0,\tau) +  ( \partial^{*} F_{\ell} \cap \{z > \bar{z} \}) \right) \\
&= \mathcal{H}^{n-1} ( \partial^{*} F_{\ell} \cap \{z < \bar{z}\}) + \mathcal{H}^{n-1} (\partial^{*}E \cap \{z=\bar{z} \}) + \mathcal{H}^{n-1} ( \partial^{*} F_{\ell} \cap \{z > \bar{z} \}).
\end{align*}

As a consequence, in order to complete the proof, we need to show that

\begin{equation} \label{main_claim}
\mathcal{H}^{n-1} ( \partial^{*} E \cap \{ z = \bar{z} \}) = \mathcal{H}^{n-1} ( \partial^{*} F_{\ell} \cap \{ z = \bar{z} \}).
\end{equation}

In what will follow, without loss of generality we assume that 
\begin{equation} \label{aplims}
r_{\ell}^{\vee} ( \bar{z}) = \mbox{aplim} (r_{\ell}, ( - \infty, \bar{z}), \bar{z}) \quad \mbox{ and } \quad 
r_{\ell}^{\wedge} (\bar{z}) =\mbox{aplim} (r_{\ell}, (\bar{z}, + \infty), \bar{z})=0.
\end{equation}

We divide the proof of \eqref{main_claim} in several steps.

\vspace{.3cm}

\noindent
\textbf{Step 1: } 

We show that

\begin{equation} \label{claim_step1_no interval}
 ( \partial^{*} E)_{\bar{z} } \subset \overline{B^{n-1} \left( 0 , r_{\ell}^{\vee} (\bar{z})\right)} \cup \{ \tau \}.
\end{equation}

To this end, it suffices to prove that 

\begin{equation}
|w| \leq r_{\ell}^{\vee}( \bar{z} )  \quad \mbox{ for every } w \in  (\partial^{*} E )_{\bar{z}} \backslash \{ \tau \} .
\end{equation}

\vspace{.3cm}

\noindent
\textbf{Step 1a: }  We show that 
\[
 |w| > r_{\ell}^{\vee} (\bar{z}) \Longrightarrow (\bar{z},w) \ \in E_{1}^{(0)}.
\]

To this aim, suppose that there exists $\delta >0$ such that 
\[
|w| = r_{\ell}^{\vee} ( \bar{z}) + \delta.
\]
Then, by arguing as in Step 1 of  Lemma \ref{densitylemma}, for every  $\bar{\rho} \in (0, \frac{\delta}{2})$ we obtain
\[
|w'| > r_{\ell}^{\vee} ( \bar{z})   + \frac{\delta}{2} \quad \mbox{ for every } (z',w') \in B_{\bar{\rho}}((\bar{z},w)).
\]
So, by the definition of $E_{1},$ we have
\begin{align*}
&E_{1} \cap B_{\bar{\rho}}((\bar{z},w)) = F_{\ell} \cap \{ z< \bar{z} \}  \cap  B_{\bar{\rho}}((\bar{z},w)) \\
& \subset \left\{ (z',w') \in \mathbb{R}\times \mathbb{R}^{n-1} : z' < \bar{z} \ \mbox{and}  \ r_{\ell}^{\vee} (\bar{z})
+ \frac{\delta}{2} < |w'| < r_{\ell} (z') \right\} \cap B_{\bar{\rho}}((\bar{z},w)).
\end{align*}

Thus, for every $\rho \in ( 0 , \bar{\rho})$, by similar calculations  as in  Step 1 of Lemma \ref{densitylemma}, we obtain
\begin{align*}
&	\lim_{ \rho \rightarrow 0^{+}} \frac{\mathcal{H}^{n}(E_{1} \cap B_{\rho} ((\bar{z},w)) )}{\omega_{n} \rho^{n}} \\
&\leq \frac{1}{ \omega_{n}} \lim_{ \rho \rightarrow 0^{+}}   \int_{(\bar{z} -\rho, \bar{z}) \cap \{ r_{\ell} > r_{\ell}^{\vee} (\bar{z}) +\frac{\delta}{2}\}} \mathcal{H}^{n-1}  \left( F_{\ell} \cap B_{\rho}((\bar{z},w)) \cap \{ z = \zeta \} \right) \ d \zeta \\
&\leq \frac{ \omega_{n-1}}{\omega_{n}} \lim_{ \rho \rightarrow 0^{+}} \frac{ \mathcal{H}^{1} \left( (  \bar{z} - \rho, \bar{z} ) \cap \left\{  r_{\ell} >  r_{\ell}^{\vee} (\bar{z}) + \frac{\delta}{2} \right\} \right)}{\rho} \\
&=0,
\end{align*}
where in the latter inequality \eqref{aplims} has been used.

\vspace{.3cm}

\noindent
\textbf{Step 1b: }  We show that 

\begin{equation}\label{prop_interval_claim1b}
\{ z= \bar{z} \} \backslash\{(\bar{z},\tau )\}    \subset \left(   (0, \tau )  + E_{2}
\right)^{(0)}.
\end{equation}

To this aim, suppose that $\epsilon:= | w - \tau | >0.$ We will prove that  $(\bar{z},w) \in ( (0,\tau) + E_2)^{(0)}.$ Recalling the argument which was used in the proof of \eqref{latter inequality}, we choose $\bar{\rho}\in (0, \frac{\epsilon}{2})$ such that 
\[
|w' - \tau | > \frac{\epsilon}{2} \quad \mbox{ for every } \quad (z',w') \in B_{\bar{\rho}} ((\bar{z},w)).
\]

Then, we have 
\begin{align*}
&( (0, \tau)  + E_{2} ) \cap B_{\bar{\rho}} ((\bar{z},w)) \\
&= \left(
(0,\tau) + ( F_{\ell} \cap \{  z \geq \bar{z} \})  \right) \cap B_{\bar{\rho}} ((\bar{z},w))\\
&\subset \left\{
(z',w') \in \mathbb{R} \times \mathbb{R}^{n-1} : z' \geq \bar{z}, \ \ \frac{\epsilon}{2} < |w' - \tau | <  r_{\ell} (z')\right\} \cap B_{\bar{\rho}} ((\bar{z},w)).
\end{align*}

We note that, for every $\rho \in (0, \bar{\rho})$, it holds
\[
 B_{\rho} ((\bar{z},w)) \cap \{z  = \zeta \}  \subset B^{n-1} ( w, \rho ) \quad \mbox{ for every } \zeta \in ( \bar{z}, \bar{z} + \rho).
\]

Thus, for every $\rho \in (0 , \bar{\rho}),$

\begin{align*}
\mathcal{H}^{n} \left((0, \tau ) + E_{2} ) \cap B_{\rho} ( ( \bar{z} ,w))\right)
&\leq  \int_{ ( \bar{z} , \bar{z} + \rho) \cap \{ r_{\ell} > \frac{\epsilon}{2} \}} \mathcal{H}^{n-1} (B^{n-1} (  w,  \rho ) )  \  d \zeta  \\
&= \omega_{n-1} \rho^{n-1}   \int_{ ( \bar{z}, \bar{z} + \rho) \cap \{ r_{\ell}  > \frac{\epsilon}{2} \} } 1 \ d \zeta  \\
&= \omega_{n-1} \rho^{n-1} \ \mathcal{H}^{1} \left(( \bar{z}, \bar{z} + \rho) \cap  \left\{ r_{\ell}  > \frac{\epsilon}{2} \right\} \right).
\end{align*}

Based on this,  by \eqref{aplims}  we infer that

\begin{align} \nonumber
\lim_{ \rho \rightarrow 0^{+}} \frac{ \mathcal{H}^{n} \left((0, \tau)  ) + E_{2} ) \cap B_{\rho} ( ( \bar{z} ,w))\right) }{ \omega_{n} \rho^{n}} &\leq \frac{ \omega_{n-1}}{\omega_{n}} \lim_{ \rho \rightarrow 0^{+}} \frac{ \mathcal{H}^{1} \left(( \bar{z}, \bar{z} + \rho) \cap \left\{ r_{\ell} > \frac{\epsilon}{2} \right\} \right)}{\rho} \\ \nonumber
&=0,
\end{align}
which proves \eqref{prop_interval_claim1b}.

\vspace{.3cm}

\noindent
\textbf{Step 1c: } To conclude the proof of Step 1, we observe that, by Step 1a and 1b, as well as by the definition of $E,$ it follows that 

\begin{align*} 
&\bigg\{ (\bar{z},w) \in \mathbb{R} \times \mathbb{R}^{n-1} : |w| >  r_{\ell}^{\vee} (\bar{z})   \bigg\} \bigg\backslash \{ ( \bar{z},\tau ) \} \subset E_{1}^{(0)} \cap  ( (0,\tau) +E_{2} )^{(0)}  = E^{(0)}.
\end{align*}

Therefore, 

\begin{align*}
(\partial^{*}E)_{ \bar{z}} &\subset  \mathbb{R}^{n-1} \bigg\backslash \left( \bigg\{ w \in \mathbb{R}^{n-1} : |w| >  r_{\ell}^{\vee} (\bar{z}) \bigg\} \bigg\backslash\{   \tau\} \right) \\
&= \overline{ B^{n-1} \left( 0,  r_{\ell}^{\vee} (\bar{z})  \right)}\cup \{  \tau\},
\end{align*}
which shows \eqref{claim_step1_no interval}.
\vspace{.3cm}

\noindent
\textbf{Step 2: } Finally, we show \eqref{main_claim}.  Note that, thanks to Step 1, Lemma \ref{densitylemma} and perimeter inequality \eqref{perimeter inequality}, we have 

\begin{align*}
P(E ; \{ z= \bar{z} \} )&= \mathcal{H}^{n-1} ( \partial^{*} E \cap \{z = \bar{z}\}) =  \mathcal{H}^{n-1}  (( \partial^{*} E )_{\bar{z}}) \\
& \leq  \mathcal{H}^{n-1}  \left(
B^{n-1} \left( (0,  r_{\ell}^{\vee} (\bar{z}) \right) \right) \\
&= \mathcal{H}^{n-1}  ( \partial^{*} F_{\ell} \cap \{z  = \bar{z} \})\\
&= P (F_{\ell}; \{z=\bar{z} \}) \leq P(E ; \{z= \bar{z} \}),
\end{align*}

which makes our proof complete.  

\end{proof}

We will now show that, if the jump part $D^{j} \ell$ of $D \ell$ is non-zero, then rigidity is violated.
\begin{proposition} \label{propostion no jumps}
	Let $\ell: \mathbb{R}\rightarrow [0, \infty)$ be a measurable function, such that $F_{\ell}$ is a set of finite perimeter and finite volume, and let  $r_{\ell}$ be  defined as in \eqref{aktina}. Suppose that $\ell$ has a jump at some point $\bar{z} \in \mathbb{R}.$ Then rigidity is violated. More precisely,  
	setting $E_{1}:= F_{\ell} \cap \{ z < \bar{z} \}$ and $E_{2} = F_{\ell} \backslash E_{1},$ then
	\[
	E=:E_{1} \cup ( (0,\tau) + E_{2} ) \in \mathcal{K} (\ell)
	\] 
	
for every  $\tau \in \mathbb{R}^{n-1}$
such that 
\begin{equation} \label{jumpcondition}
0<|\tau | <  r_\ell^{\vee}(\bar{z}) - r_\ell^{\wedge} (\bar{z}).
\end{equation}
\end{proposition}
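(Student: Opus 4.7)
The plan is to follow the same scheme as in Proposition~\ref{proposition interval}: first reduce the computation of $P(E)$ to the single slice $\{z=\bar z\}$, then analyse $(\partial^*E)_{\bar z}$ via densities, and finally match $\mathcal{H}^{n-1}((\partial^*E)_{\bar z})$ with $P(F_\ell;\{\bar z\}\times\mathbb{R}^{n-1})$. As in Step~1 of that proof, openness of $\{z<\bar z\}$ and $\{z>\bar z\}$ yields
\[
\partial^*E\cap\{z<\bar z\}=\partial^*F_\ell\cap\{z<\bar z\}\quad\mbox{and}\quad\partial^*E\cap\{z>\bar z\}=(0,\tau)+\bigl(\partial^*F_\ell\cap\{z>\bar z\}\bigr),
\]
so that the problem reduces to proving $\mathcal{H}^{n-1}((\partial^*E)_{\bar z})=\mathcal{H}^{n-1}((\partial^*F_\ell)_{\bar z})$. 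By Lemma~\ref{densitylemma}, together with Corollary~\ref{corollary_perimeter} applied to $B=\{\bar z\}$, the right-hand side equals $\omega_{n-1}\bigl(r_\ell^\vee(\bar z)^{n-1}-r_\ell^\wedge(\bar z)^{n-1}\bigr)=\ell^\vee(\bar z)-\ell^\wedge(\bar z)$.

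Without loss of generality I would assume $r_\ell^\vee(\bar z)=\mbox{aplim}(r_\ell,(-\infty,\bar z),\bar z)$ and $r_\ell^\wedge(\bar z)=\mbox{aplim}(r_\ell,(\bar z,+\infty),\bar z)$, so that near $\bar z$ the set $E_1$ looks, from the left, like the cylinder of cross section $B^{n-1}(0,r_\ell^\vee(\bar z))$, while $(0,\tau)+E_2$ looks, from the right, like the cylinder of cross section $B^{n-1}(\tau,r_\ell^\wedge(\bar z))$. Since $E_1$ and $(0,\tau)+E_2$ are concentrated on opposite sides of the hyperplane $\{z=\bar z\}$, their $n$-dimensional densities at points $(\bar z,w)$ are additive. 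A density computation essentially identical to Steps~1a and~1b of Proposition~\ref{proposition interval} then shows that, for $\mathcal{H}^{n-1}$-a.e.\ $w$, the contribution of $E_1$ is $1/2$ exactly when $|w|<r_\ell^\vee(\bar z)$, and the contribution of $(0,\tau)+E_2$ is $1/2$ exactly when $|w-\tau|<r_\ell^\wedge(\bar z)$.

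Here the strict assumption $|\tau|<r_\ell^\vee(\bar z)-r_\ell^\wedge(\bar z)$ becomes decisive: by the triangle inequality it forces $B^{n-1}(\tau,r_\ell^\wedge(\bar z))\subset B^{n-1}(0,r_\ell^\vee(\bar z))$, and hence rules out the case $|w|>r_\ell^\vee(\bar z)$ together with $|w-\tau|<r_\ell^\wedge(\bar z)$. Accordingly the four-case analysis collapses to density $1$ on $B^{n-1}(\tau,r_\ell^\wedge(\bar z))$, density $0$ outside $\overline{B^{n-1}(0,r_\ell^\vee(\bar z))}$, and density $1/2$ precisely on the annular region $B^{n-1}(0,r_\ell^\vee(\bar z))\setminus\overline{B^{n-1}(\tau,r_\ell^\wedge(\bar z))}$. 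Invoking \eqref{locally_finite_perimeter}--\eqref{density_boundary} to get $(\partial^*E)_{\bar z}\subset E_{\bar z}^{(1/2)}$ modulo $\mathcal{H}^{n-1}$-null sets, I would obtain
\[
\mathcal{H}^{n-1}((\partial^*E)_{\bar z})\leq\omega_{n-1}\bigl(r_\ell^\vee(\bar z)^{n-1}-r_\ell^\wedge(\bar z)^{n-1}\bigr)=\mathcal{H}^{n-1}((\partial^*F_\ell)_{\bar z}),
\]
and the reverse inequality follows from the perimeter inequality \eqref{perimeter inequality} applied to $E$, which is still $\ell$-distributed. Combining this with the first step gives $P(E)=P(F_\ell)$, i.e.\ $E\in\mathcal{K}(\ell)$.

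The main obstacle is the density computation at points of the slice $\{z=\bar z\}$; conceptually it is a repetition of the arguments from Lemma~\ref{densitylemma} and Proposition~\ref{proposition interval}, but the bookkeeping is slightly more delicate because both $E_1$ and $(0,\tau)+E_2$ can simultaneously contribute density $1/2$ at the same point. The geometric hypothesis $|\tau|<r_\ell^\vee(\bar z)-r_\ell^\wedge(\bar z)$ is precisely what makes the translation of the inner ``thin'' disk stay inside the outer ``thick'' disk, so that the translation only rearranges the flat vertical wall of $\partial^*F_\ell$ at $\bar z$ without altering its total $\mathcal{H}^{n-1}$-measure.
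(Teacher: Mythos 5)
Your proposal is correct and follows essentially the same route as the paper: it localises $P(E)$ to the slice $\{z=\bar z\}$ via the same openness argument, uses the same density computations (as in Lemma~\ref{densitylemma} and Proposition~\ref{proposition interval}) to bound $\mathcal{H}^{n-1}((\partial^*E)_{\bar z})$ above by the measure of the annulus $B^{n-1}(0,r_\ell^\vee(\bar z))\setminus B^{n-1}(\tau,r_\ell^\wedge(\bar z))$, exploits the jump condition \eqref{jumpcondition} exactly as the paper does to place the inner ball inside the outer one, and closes with the perimeter inequality. The only difference is presentational: you phrase the estimate as an additivity-of-densities case analysis identifying $E^{(1/2)}_{\bar z}$, whereas the paper proves the two implications $|w-\tau|<r_\ell^\wedge(\bar z)\Rightarrow(\bar z,w)\in(\mathbb{R}^n\setminus E)^{(0)}$ and $|w|>r_\ell^\vee(\bar z)\Rightarrow(\bar z,w)\in E^{(0)}$ to obtain the same inclusion of $(\partial^*E)_{\bar z}$ directly.
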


\begin{proof}
Let $E_{1}, \ E_{2}$ and $E$ be as in the statement. Let also  $\tau \in \mathbb{R}^{n-1}$ be such that \eqref{jumpcondition} is satisfied. It is not restrictive to  assume that 
\begin{equation} \label{jump_approx_limits}
r_{\ell}^{\vee}(\bar{z}) = \mbox{aplim} (r_{\ell}, (- \infty, \bar{z}) , \bar{z}) \quad \mbox{and} \quad r_{\ell}^{\wedge} (\bar{z} ) = \mbox{aplim} (r_{\ell}, ( \bar{z} , + \infty), \bar{z} ).
\end{equation}
By an analogous argument as in the beginning of the proof of  Proposition \ref{proposition interval}, we obtain
\[
P(E)= \mathcal{H}^{n-1} ( \partial^{*} F_{\ell} \cap \{z < \bar{z} \} ) + \mathcal{H}^{n-1} ( \partial^{*} E \cap \{z = \bar{z} \} ) + \mathcal{H}^{n-1} ( \partial^{*} F_{\ell} \cap \{ z > \bar{z} \} ).
\]
Hence, in order to complete the proof, we finally need to show that 
\begin{equation}\label{jump_lemma}
\mathcal{H}^{n-1} (  \partial^{*} E \cap \{ z = \bar{z} \})  = \mathcal{H}^{n-1} (  \partial^{*} F_{\ell} \cap \{ z = \bar{z} \}).
\end{equation}

We divide the proof of \eqref{jump_lemma} into further steps.

\vspace{.3cm}

\noindent
\textbf{Step 1: } We prove that 
\begin{equation} \label{jump_step1_claim}
(\partial^{*} E)_{\bar{z}} \subset \overline{B^{n-1} \left(0, r_{\ell}^{\vee}( \bar{z} )\right)} \backslash B^{n-1} \left( \tau , r_{\ell}^{\wedge}(\bar{z})\right). 
\end{equation}

In order to show \eqref{jump_step1_claim}, it suffices to prove that  
\begin{subequations}
\begin{equation} \label{jump_step1_ineq_B}
 r_{\ell}^{\wedge} (\bar{z})  \leq |w-\tau| \quad \mbox{ for every } w \in (\partial^{*} E)_{\bar{z}}
\end{equation}
and
\begin{equation} \label{jump_step1_ineq_A}
 r_{\ell}^{\vee} (\bar{z}) \geq |w| \quad \mbox{ for every } w \in (\partial^{*} E)_{\bar{z}}.
\end{equation}
\end{subequations}

 First, let us  prove \eqref{jump_step1_ineq_B}. To achieve that, we observe, due to \eqref{locally_finite_perimeter}, our claim will follow if we prove that
\begin{equation} \label{finale}
    |w -\tau| < r_{\ell}^{\wedge} (\bar{z})  \Longrightarrow (\bar{z},w) \in (\mathbb{R}^{n} \backslash E)^{(0)}.
\end{equation}

To this end, suppose  that $ w \in \mathbb{R}^{n-1}$ is such that $|w -\tau| < r_{\ell}^{\wedge} (\bar{z})$. Then, we observe that 
\begin{align*}
\mathbb{R}^{n} \backslash E = ( \mathbb{R}^{n} \backslash E) \cap \{ z < \bar{z} \}) \cup  ( \mathbb{R}^{n} \backslash E) \cap \{ z \geq \bar{z}\} ).
\end{align*}

\begin{figure}[h]
\centering
\resizebox{0.65\textwidth}{!}
{
\subfigure
    {
     \begin{tikzpicture}
        %------------------
    
        %-------------------
        %first vertical line
        \draw[](0.5,-1.75)--(0.5,1.75); %sum of length =3 to 4
        %
        %second vertical line
        
        % first and second connections
        %upper
        \draw[](0.5,1.75)--(2.5,1.2);
        \draw[](0.5,-1.75)--(2.5,-1.2);

        %vertical : sum of length = 3
        \draw[](2.5,0.6)--(2.5,1.2);
        \draw[](2.5,-0.6)--(2.5,-1.2);

      \draw[dotted] (2.5,0) -- (2.5,-2)  ;
      \draw[dotted] (2.5,0) -- (2.5,2);
      \draw[fill=black] (2.5,0) circle (1pt);

    % for r ell down
     % \draw[<->,dotted] (2.9,0) -- (2.9,0.6) ;
     % \draw[dotted] (2.5,0.6) -- (2.9,0.6);
     % \draw[dotted] (2.5,0) -- (2.9,0);

% for r ell down
      \draw[<->,dotted] (2.9,-0.6) -- (2.9,0) ;
      \draw[dotted] (2.5,-0.6) -- (2.9,-0.6);
      \draw[dotted] (2.5,0) -- (2.9,0);
    %for r ell up
        \draw[<->,dotted] (2,0) -- (2,1.2) ;
      \draw[dotted] (2,1.2) -- (2.5,1.2);
      \draw[dotted] (2,0) -- (2.5,0);

        %third vertical line / sum of length =1.6 to 2
        \draw[](4,-1)--(4,1);
        % upper 2nd side connex
	    \draw[] (2.5,0.6)-- (4,1);
        % lower 2nd side connex
        \draw[] (2.5,-0.6)-- (4,-1);

		%\node at (2.65,-0.2) {$0$} ;
        \node at (3.4,-0.3) {\tiny $r_{\ell}^{\wedge}(\bar{z})$} ;
        \node at (1.5,0.6) {\tiny $r_{\ell}^{\vee}(\bar{z})$} ;
        \node at (2,1.8) {$F_{\ell}$};
        \node at (2.25,-1.8) {$\bar{z}$};

\end{tikzpicture}

    }

   \hspace{0.05\textwidth}

\subfigure
    {
       \begin{tikzpicture}

        %-------------------
        %first vertical line
        \draw[](0.5,-1.75)--(0.5,1.75); %sum of length =3 to 4
        %
        %second vertical line
        
        % first and second connections
        %upper
        \draw[](0.5,1.75)--(2.5,1.2);
        \draw[](0.5,-1.75)--(2.5,-1.2);

        %vertical : sum of length = 3
        \draw[](2.5,0.6+0.4)--(2.5,1.2);
        \draw[](2.5,-0.6+0.4)--(2.5,-1.2);
    
      \draw[dotted] (2.5,0) -- (2.5,-2)  ;
      \draw[dotted] (2.5,0) -- (2.5,1.8);
        \draw[fill=black] (2.5,0.4) circle (1pt);

        %third vertical line / sum of length =1.6 to 2
        \draw[](4,-0.6)--(4,1.4);
        % upper 2nd side connex
	    \draw[] (2.5,0.6+0.4)-- (4,1+0.4);
        % lower 2nd side connex
        \draw[] (2.5,-0.6+0.4)-- (4,-1+0.4);

        \node at (1.5,2) {$E_{1}$};
         \node at (3.7,-1) {$(0,\tau)+ E_{2}$};
        \node at (2.2,0.4) {$\tau$};
         \node at (2.25,-1.8) {$\bar{z}$};

\end{tikzpicture}

}
}

\caption{A graphical illustration of Step 1 for $n=2$.}
\label{ell jump counter proof}

\end{figure}
Now,  arguing as  in Step 1 of Proposition \ref{proposition interval}, we infer that
\begin{align} \label{onehand}
\lim_{\rho  \rightarrow 0^{+}} \frac{\mathcal{H}^{n} ( (\mathbb{R}^{n} \backslash E)\cap B_{\rho} (( \bar{z},w) \cap \{z < \bar{z} \})}
{\omega_{n} \rho^{n}} =0.
\end{align}

Hence, to complete the proof of the claim, it remains to show that 
\begin{align} \label{SECONDHAND}
\lim_{\rho  \rightarrow 0^{+}} \frac{\mathcal{H}^{n} ( (\mathbb{R}^{n} \backslash E)\cap B_{\rho} (( \bar{z},w)  \cap \{z \geq \bar{z})\}}
{\omega_{n} \rho^{n}} =0.
\end{align}

 Then there exists $\delta >0$ 
such that 
\[
|w-\tau| + \delta = r_{\ell}^{\wedge}(\bar{z}).
\]

Let now $\bar{\rho} \in (0, \frac{\delta}{2}]$. Then, for each $(z',w') \in B_{\bar{\rho}}((\bar{z},w))$ 
\[
r_{\ell}^{\wedge} (\bar{z}) \geq |w'-\tau| -|w-w'| + \delta > |w'-\tau| - \frac{\delta}{2} +\delta = |w'-\tau| + \frac{\delta}{2},
\]
so that 
\begin{equation}\label{3delta}
r_{\ell}^{\wedge} (\bar{z}) - \frac{\delta}{2} > |w' - \tau| \quad \mbox{ for every } (z',w') \in B_{\bar{\rho}}((\bar{z},w)).
\end{equation}
Then, employing \eqref{3delta} and the definition of the set $E$, we infer 

\begin{align*}
&  (\mathbb{R}^{n} \backslash E)\cap B_{\bar{\rho}}((\bar{z},w)) \cap \{ z \geq \bar{z}\} \\
&\subset \left\{ (z',w') \in \mathbb{R} \times \mathbb{R}^{n-1}: z \geq \bar{z}, \  r_{\ell}^{\wedge}(\bar{z}) - \frac{\delta}{2}>|w'-\tau|\geq r_{\ell} (z') 
\right\} \cap  B_{\bar{\rho}}((\bar{z},w)).
\end{align*}
Moreover, we note that for $\rho \in (0, \bar{\rho})$, we  have 
\[
B_{\rho}((\bar{z},w)) \cap \{ z = \zeta \} \subset B^{n-1} ( w,\rho) \quad \mbox{ for every }  \zeta \in ( \bar{z} - \rho ,\bar{z} + \rho).
\]
As a consequence, for $\rho \in (0, \bar{\rho})$ we obtain 
\begin{align*}
&\mathcal{H}^{n}  \left((\mathbb{R}^{n} \backslash E)\cap B_{\rho}((\bar{z},w)) \cap \{ z \geq \bar{z}\}  \right) \\ 
&\leq \int_{ ( \bar{z} , \bar{z} + \rho) \cap \left\{ r_{\ell}< r_{\ell}^{\wedge}(\bar{z}) - \frac{\delta}{2} \right\}}  \mathcal{H}^{n-1} (B^{n-1} \left( w, \rho ) \right)  \  d \zeta\\
&= \omega_{n-1} \rho^{n-1}   \int_{ ( \bar{z}, \bar{z} + \rho) \cap \left\{ r_{\ell} < r_{\ell}^{\wedge}(\bar{z}) - \frac{\delta}{2} \right\}} 1 \ d \zeta  \\
&= \omega_{n-1} \rho^{n-1}  \mathcal{H}^{1} \left(( \bar{z}, \bar{z} + \rho) \cap \left\{ r_{\ell}  <  r_{\ell}^{\wedge}(\bar{z}) - \frac{\delta}{2}  \right\} \right).
\end{align*}
Then, thanks to \eqref{jump_approx_limits}, we infer

\begin{align*}
&\lim_{\rho \rightarrow 0^{+}} \frac{\mathcal{H}^{n} ( (\mathbb{R}^{n} \backslash E) \cap B_{\rho} ( (\bar{z},w))\cap \{ z \geq \bar{z}\})}{\omega_{n} \rho^{n}} \\
&\leq \frac{\omega_{n-1}}{\omega_{n}} \lim_{\rho \rightarrow 0^{+}}  \frac{  \mathcal{H}^{1} \left(( \bar{z}, \bar{z} + \rho) \cap \left\{ r_{\ell}  < r_{\ell}^{\wedge}(\bar{z}) - \frac{\delta}{2} \right\} \right)}
{\rho} =0,
\end{align*}
where \eqref{lower_approx} has been employed.
This proves \eqref{SECONDHAND}. Then, combining \eqref{onehand} and \eqref{SECONDHAND},  \eqref{finale} follows, and thus  the proof of \eqref{jump_step1_ineq_B} is complete.

Now, for \eqref{jump_step1_ineq_A}, arguing again as in Step 1 of Proposition \ref{proposition interval}, we have 
\begin{equation}
|w|> r_{\ell}^{\vee} (\bar{z}) \Longrightarrow (\bar{z},w) \in E_{1}^{(0)}.
\end{equation}
Making use of similar arguments as above, it can be shown that 
\begin{equation}
|w|> r_{\ell}^{\vee} (\bar{z}) \Longrightarrow (\bar{z},w) \in ((0,\tau)+E_{2})^{(0)},
\end{equation}
which, shows \eqref{jump_step1_ineq_A}, and in turn \eqref{jump_step1_claim}.
\vspace{.3cm}

\noindent
\textbf{Step 2:} We conclude the proof. From \eqref{jumpcondition}, we infer that 
\[
B^{n-1} (\tau, r_{\ell}^{\wedge} (\bar{z}) ) \subset B^{n-1} ( 0, r_{\ell}^{\vee} ( \bar{z})).
\]
As a consequence, thanks to Step 1, Lemma \ref{densitylemma} and perimeter inequality \eqref{perimeter inequality}, we have

\begin{align*}
P(E ; \{ z = \bar{z} \} ) &= \mathcal{H}^{n-1} ( \partial^{*} E \cap \{ z = \bar{z} \} ) = \mathcal{H}^{n-1} ( ( \partial^{*} E)_{\bar{z} }) \\ 
&\leq \mathcal{H}^{n-1} (B^{n-1} ( ( 0, r_{\ell}^{\vee} ( \bar{z})) \backslash B^{n-1} (\tau, r_{\ell}^{\wedge} (\bar{z}) ) ) \\
&= \ell^{\vee}( \bar{z} ) - \ell^{\wedge} ( \bar{z} ) \\
&= P( F_{\ell} ; \{ z= \bar{z} \} ) \\
&\leq P(E ; \{z = \bar{z} \} ).
\end{align*}
From this, we deduce \eqref{jump_lemma}, which completes the proof.
\color{black}
\end{proof}

We are going to prove now that if the Cantorian part  $D^{c} \ell$ of $ D \ell$ is non-zero, then rigidity is violated.

\begin{proposition} \label{proposition_cantorian}
	Let $\ell: \mathbb{R}\rightarrow [0, \infty)$ be a measurable function, such that $F_{\ell}$ is a set of finite perimeter and finite volume. Let also $r_{\ell}$ be as in \eqref{aktina}. Suppose that $D^{c} \ell \neq 0.$  Then rigidity is violated. 
\end{proposition}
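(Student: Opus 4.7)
The strategy is to construct an $\ell$-distributed competitor $E \in \mathcal{K}(\ell)$ that is not equivalent to a translate of $F_\ell$, via a ``sliding'' deformation adapted to the Cantorian variation of $r_\ell$. Fix $\epsilon \in (0,1)$ and a unit vector $e \in \mathbb{R}^{n-1}$. By Proposition~\ref{ell_regular}, $\ell \in BV(\mathbb{R})$, and hence $r_\ell \in BV(\mathbb{R})$; decompose $r_\ell = r_\ell^{ac} + r_\ell^j + r_\ell^c$ into its absolutely continuous, jump, and Cantorian parts. Since $D^c\ell \neq 0$ forces $D^c r_\ell \neq 0$, the continuous function $r_\ell^c$ is non-constant. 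Define $\tau(z) := \epsilon\, r_\ell^c(z)\, e$ and
\[
E := \{(z,w) \in \mathbb{R}\times\mathbb{R}^{n-1} : |w-\tau(z)| < r_\ell(z)\}.
\]
By Fubini, $E$ is $\ell$-distributed; since $\tau$ is non-constant, $E$ is not equivalent to any translate of $F_\ell$. The heart of the proof is to show $P(E) = P(F_\ell)$.

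We argue by approximation. Choose nested partitions $\mathcal{P}_k$ of $\mathbb{R}$ into half-open intervals of length at most $2^{-k}$, with all endpoints chosen outside the (countable) jump set $J_\ell$ of $\ell$. Let $r_{\ell,k}^c$ denote the right-continuous step function agreeing with $r_\ell^c$ at every right endpoint of $\mathcal{P}_k$; by continuity of $r_\ell^c$, $r_{\ell,k}^c \to r_\ell^c$ uniformly on compacts, and by the definition of total variation via refining partitions, $|Dr_{\ell,k}^c|(\mathbb{R}) \to |D^c r_\ell|(\mathbb{R})$. Set $r_{\ell,k} := r_\ell^{ac}+r_\ell^j+r_{\ell,k}^c$, $\ell_k := \omega_{n-1}\, r_{\ell,k}^{n-1}$, $\tau_k := \epsilon\, r_{\ell,k}^c\, e$, and $E_k := \{(z,w): |w-\tau_k(z)| < r_{\ell,k}(z)\}$. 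By the choice of $\mathcal{P}_k$, at each jump point $z_j$ of $\ell_k$ exactly one of $r_\ell^j$ and $r_{\ell,k}^c$ is discontinuous, whence
\[
|\tau_k^\vee(z_j)-\tau_k^\wedge(z_j)| = \epsilon\,(r_{\ell,k}^{c,\vee}(z_j)-r_{\ell,k}^{c,\wedge}(z_j)) \leq \epsilon\,(r_{\ell,k}^\vee(z_j)-r_{\ell,k}^\wedge(z_j)) < r_{\ell,k}^\vee(z_j)-r_{\ell,k}^\wedge(z_j),
\]
which is the compatibility condition \eqref{jumpcondition} required by Proposition~\ref{propostion no jumps}.

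Between consecutive jumps of $\ell_k$, $\tau_k$ is locally constant, so $E_k$ coincides with a translate of $F_{\ell_k}$, and the contributions to $P(E_k)$ and $P(F_{\ell_k})$ in these regions agree. At each jump $z_j$, the same slice-level argument used in Steps~1--2 of the proof of Proposition~\ref{propostion no jumps}, combined with Lemma~\ref{densitylemma}, shows that $(\partial^* E_k)_{z_j}$ agrees $\mathcal{H}^{n-1}$-almost everywhere with $\overline{B^{n-1}(\tau_k^\vee(z_j), r_{\ell,k}^\vee(z_j))}\setminus B^{n-1}(\tau_k^\wedge(z_j), r_{\ell,k}^\wedge(z_j))$, whose $\mathcal{H}^{n-1}$ measure equals $\omega_{n-1}\bigl(r_{\ell,k}^\vee(z_j)^{n-1}-r_{\ell,k}^\wedge(z_j)^{n-1}\bigr) = \ell_k^\vee(z_j) - \ell_k^\wedge(z_j) = P(F_{\ell_k};\{z_j\})$. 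Summing all contributions yields $P(E_k) = P(F_{\ell_k})$ for every $k$.

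Passing to the limit, $\ell_k \to \ell$ in $L^1(\mathbb{R})$ and $\tau_k \to \tau$ uniformly on compacts give $E_k \to E$ and $F_{\ell_k} \to F_\ell$ in $L^1(\mathbb{R}^n)$; Corollary~\ref{corollary_perimeter} applied to $\ell_k$, combined with $|Dr_{\ell,k}^c|(\mathbb{R}) \to |D^c r_\ell|(\mathbb{R})$, yields $P(F_{\ell_k}) \to P(F_\ell)$. By lower semicontinuity of the perimeter and the perimeter inequality \eqref{perimeter inequality},
\[
P(F_\ell) \leq P(E) \leq \liminf_{k\to\infty} P(E_k) = \lim_{k\to\infty} P(F_{\ell_k}) = P(F_\ell),
\]
hence $P(E) = P(F_\ell)$ and $E \in \mathcal{K}(\ell)$ is the desired non-rigid equality case. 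The main obstacle lies in extracting simultaneously the uniform convergence $\tau_k \to \tau$ and the total-variation convergence $|Dr_{\ell,k}^c|(\mathbb{R}) \to |D^c r_\ell|(\mathbb{R})$, which are jointly needed to pass from $P(E_k) = P(F_{\ell_k})$ to $P(E) = P(F_\ell)$; both are available thanks to the continuity of $r_\ell^c$, the inner regularity of the Cantorian measure, and the compatibility of the partitions $\mathcal{P}_k$ with $J_\ell$.
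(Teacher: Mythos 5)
Your overall strategy matches the paper's: discretise the Cantorian variation by a step function, use the jump-case argument (Proposition~\ref{propostion no jumps}) to see that each cut-and-translate preserves the perimeter of the corresponding Schwarz set, and then squeeze $P(E)=P(F_\ell)$ from lower semicontinuity on one side and the perimeter inequality \eqref{perimeter inequality} on the other. A nice refinement in your version is that you keep $r_\ell^{ac}$ and $r_\ell^j$ fixed and discretise only $r_\ell^c$, whereas the paper reduces to a purely Cantorian $\ell$ first; your choice avoids the loss of $\sqrt{a^2+b^2}<a+b$ in Corollary~\ref{corollary_perimeter} that a full step-function approximation would create.

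There is, however, a genuine gap at the very first step. You assert that $\ell\in BV(\mathbb{R})$ ``hence'' $r_\ell\in BV(\mathbb{R})$, and then take the decomposition $r_\ell=r_\ell^{ac}+r_\ell^{j}+r_\ell^{c}$ on all of $\mathbb{R}$. For $n\ge 3$ this implication fails: the map $t\mapsto (t/\omega_{n-1})^{1/(n-1)}$ is not Lipschitz at $0$, so where $\ell$ oscillates down to zero, $r_\ell$ need not have bounded variation. For example, let $\ell(z)=\omega_{n-1}\,|z\sin(1/z)|^{n-1}$ near $z=0$ (smoothly truncated outside a neighbourhood of $0$); then $\ell\in W^{1,1}_{\rm loc}$, $F_\ell$ has locally finite perimeter near $\{z=0\}$, but $r_\ell(z)=|z\sin(1/z)|$ is not of bounded variation in any neighbourhood of $0$. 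Adding a Cantorian piece supported away from $0$ produces $\ell$ with $D^c\ell\neq 0$ and $F_\ell$ of finite perimeter and volume, yet with $r_\ell\notin BV(\mathbb{R})$. In that situation $r_\ell^c$ is undefined, so $\tau(z)=\epsilon\,r_\ell^{c}(z)e$ and the set $E$ are not even well-posed. The paper avoids this by first invoking Propositions~\ref{proposition interval} and~\ref{propostion no jumps} to reduce to the case where $\ell$ is continuous and $\{\ell^\wedge>0\}$ is an interval, and then choosing a compact interval $J=(a,b)\subset\subset\{\ell>0\}$ with $|D^c\ell|(J)>0$; on $J$ one has $\ell\ge c>0$, the power map is Lipschitz there, $r_\ell|_J\in BV(J)$, and the sliding function is taken constant outside $J$. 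Your argument needs the same localisation before the decomposition of $r_\ell$ can be used.

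A secondary, more technical concern: you pass from the convergence $|Dr_{\ell,k}^c|(\mathbb{R})\to |D^c r_\ell|(\mathbb{R})$ directly to $P(F_{\ell_k})\to P(F_\ell)$, but Corollary~\ref{corollary_perimeter} is stated in terms of $\nabla\ell_k$ and $|D^s\ell_k|$, not $r_{\ell,k}$; converting one to the other requires a chain rule for $BV$ functions and the uniform positive lower bound on $r_{\ell,k}$ that only the localisation provides. Likewise, your partitions are countably infinite, so the ``sum over all contributions'' and the iterated application of the jump lemma need a bit more care than in the paper's compactly supported, finite-partition setting. These are fillable, but the missing localisation is what makes your argument incomplete as stated.
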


\begin{proof}
With no loss of generality, we assume that $\ell$ is a purely Cantorian function. Indeed,  one can decompose $\ell$ as 
\begin{equation} \label{decompositionBV}
\ell = \ell^{a} + \ell^{j} + \ell^{c},
\end{equation}
where $\ell^{a} \in W^{1,1}(\mathbb{R})$, $\ell^{j}$ is purely jump function and $\ell^{c}$ is purely Cantorian. In the case of $\ell^{j} \neq 0$,  the result becomes trivial since, due to  Proposition \ref{propostion no jumps}, rigidity is violated. Furthermore, in the case of $\ell \neq \ell^{c}$, thanks to \eqref{decompositionBV}, the following proof will sustain  only to $\ell^{c}.$ Thus, in what will follow, we assume that
\[
D\ell = D\ell^{c} =D^{c} \ell .
\]

In addition, due to Proposition \ref{proposition interval}, we can assume that  $\{\ell^{\wedge} >0\}$  is an interval, otherwise, the result becomes trivial. Note now that, since $\ell$ is continuous, there exists $a,b >0$ such that $J:= (a,b) \subset \subset \{ \ell^{\wedge}>0\}$ and 
\begin{equation} \label{ell positivity}
\ell(z) >0, \quad \mbox{ for every }  z \in J.
\end{equation}
Since $D^{c} \ell \neq 0,$ we can assume that $| D^{c} \ell | (J) >0.$

We now  fix $\lambda \in (0,1)$, and we define the function $g: \mathbb{R} \rightarrow \mathbb{R}
$ as
\[
g(z)= 
\begin{cases}
0, \quad \quad \quad \quad\quad \quad\quad \   \mbox{ if } z \in (-\infty, a)\\
\lambda( r_{\ell} (z) - r_{\ell}(a)), \quad \mbox{ if } z \in (a, b)\\
\lambda( r_{\ell} (b) - r_{\ell}(a)), \quad \mbox{ if } z \in (b, + \infty).\\
\end{cases}. 
\]

Let us fix a unit vector $e \in \mathbb{R}^{n-1}$. We define the set
\begin{equation} \label{counterset_E}
E:= \left\{ (z,w ) \in \mathbb{R} \times \mathbb{R}^{n-1} : |w- g(z)e| < r_{\ell} (z)  \right\}.
\end{equation}

One can observe that we cannot obtain $E$ using  a single translation on $F_{\ell}$ along $\mathbb{R}^{n-1}$. We are going to prove now that $E \in\mathcal{K} (\ell).$ We divide the proof in several steps.

\vspace{.3cm}

\noindent
\textbf{Step 1:} 
We construct a sequence $\{\ell^{k}\}_{k \in \mathbb{N}}$, where $\ell^{k} : J \rightarrow [0,\infty)$, which satisfies the following properties:

\begin{enumerate}[(i)]
    \item $ r_{\ell}^{k}(z) \longrightarrow r_{\ell}(z)$,  as $k \rightarrow \infty$ for $\mathcal{H}^{1}$-a.e. $z \in J$, \label{i_Cantor_step1}
    \item $D \ell^{k} = D^{j} \ell^{k}$ for every $ k \in \mathbb{N}$, \label{ii_Cantor_step1}
    \item $\displaystyle{\lim_{k \rightarrow \infty} P ( F_{\ell^k} ;  J \times \mathbb{R}^{n-1} ) = P (F_{\ell} ; J \times \mathbb{R}^{n-1})}$. \label{iii_Cantor_step1}
\end{enumerate}

By \eqref{1D_totalvariation} and since $\ell$ is continuous, we have 
\[
|D\ell| (J) = \sup \left\{ \sum_{i=1}^{N-1} | \ell(z_{i+1}) - \ell(z_{i})|: \ a< z_{1} < z_{2}< \cdots < z_{N} <b \right\},
\]
where the supremum runs over $N \in \mathbb{N}$ and over all $z_{1}, z_{2} , \dots, z_{N}$ with $a<z_{1}< z_{2} < \cdots < z_{N}<b$. From that, for every $k \in \mathbb{N}$ there exist $N_{k} \in \mathbb{N}$ and $z_{1}^{k}, \dots, z_{N}^{k}$ with $a< z_{1}^{k} < \cdots < z_{N}^{k} < b$ such that 
\begin{equation} \label{supremum def for ell}
| D \ell| (J)  \leq \sum_{i=1}^{N_{k}-1} | \ell (z_{i+1}^{k}) - \ell (z_{i}^{k})|+ \frac{1}{k}
\end{equation}
and 
\[
| z_{i+1}^{k} - z_{i}^{k}| < \frac{1}{k}, \quad \mbox{ for every } i=1,\dots, N_{k}-1.
\]

It is not restrictive to assume that the partitions are increasing in $k$, i.e. 
\[
\{z_{1}^{k} , \cdots, z_{N_{k}}^{k} \} \subset \{ z_{1}^{k+1} , \cdots, z_{N_{k}+1}^{k+1} \} \quad \mbox{ for every } k \in \mathbb{N}.
\]

We define now for every $k \in \mathbb{N}$
\begin{equation} \label{ell_discrete}
\ell^{k} (z) := \sum_{i=0}^{N_{k}} \ell (z_{i}^{k}) \chi_{[z_{i}^{k}, z_{i+1}^{k})} (z),
\end{equation}
where we set $z_{0}^{k}:=a$ and $z_{N_{k}+1}^{k} :=b.$ 
Moreover, we set 
\[
r_{\ell}^{k}(z):= \left( \frac{\ell^{k} (z)}{\omega_{n-1}} \right)^{\frac{1}{n-1}} \quad \mbox{ for every $z \in J$ and for every $k \in \mathbb{N}$}.
\]
Note that, by definition, $r_{\ell}^{k}=r_{\ell^{k}}$ and $r_{\ell}^{k} \in BV(J).$ By the continuity of $\ell$, we infer that 
\begin{equation} \label{ell_continuity}
\ell^{k} (z) \longrightarrow \ell(z)  \quad  \mbox{ for } \mathcal{H}^{1}\mbox{-a.e. } z \in J.
\end{equation}
%------------------------------------------------
Hence, since the map $\eta \longmapsto (\eta/ \omega_{n-1})^{1/n-1}$ is continuous in $(0,\infty )$, we infer that (\ref{i_Cantor_step1}) holds true. Moreover, by \eqref{ell_discrete}, (\ref{ii_Cantor_step1}) holds also true.

For (\ref{iii_Cantor_step1}), we note that thanks to \eqref{ell_continuity}, we have 
\begin{equation} \label{seq_corollary application}
\lim_{k \rightarrow \infty} \mathcal{H}^{n-2} \left( (\partial^{*}F_{\ell^{k}})_{z} \right)  =  \mathcal{H}^{n-2} \left( (\partial^{*}F_{\ell})_{z} \right) \quad \mbox{ for } \mathcal{H}^{1} \mbox{-a.e }  z \in J.
\end{equation}
In addition, 
\begin{align}
|D\ell^{k}| (J) &= \sum_{i=0}^{N_{k}} | \ell (z_{i+1}^{k}) - \ell (z_{i}^{k})| \nonumber  \\
&= | \ell(z_{1}^{k}) - \ell(a) | +   \sum_{i=1}^{N_{k}-1} | \ell (z_{i+1}^{k}) - \ell (z_{i}^{k} )|+| \ell(b) - \ell(z_{N_{k}}^{k})|. \label{Dl^k_bound}
\end{align}
Now, using \eqref{supremum def for ell}, we obtain
\[
|D \ell | (J) - \frac{1}{k} \leq \sum_{i=1}^{N_{k}-1}  | \ell ( z_{i+1}^{k}) - \ell(z_{i}^{k})| \leq |D\ell| (J).
\]
Combining the above inequality with \eqref{Dl^k_bound} and recalling again the continuity of $\ell,$ we infer that
\begin{equation} \label{ell_limit}
|D^{c} \ell|(J)= | D \ell | (J) = \lim _{k \rightarrow \infty} \sum_{i=1}^{N_{k} -1} | \ell (z_{i+1}^{k}) - \ell(z_{i}^{k}) | = \lim_{k \rightarrow \infty} |D \ell^{k} | (J)  =\lim_{k \rightarrow \infty} |D^{c} \ell^{k}|(J).
\end{equation}

Finally, recalling Corollary \ref{corollary_perimeter} and employing \eqref{seq_corollary application}, we obtain
\begin{align*}
\lim_{k \rightarrow \infty} P ( F_{\ell^{k}}; J \times \mathbb{R}^{n-1})& = \lim_{k \rightarrow \infty}\left( \int_{J} \mathcal{H}^{n-2} \left( ( \partial^{*} F_{\ell^{k}})_{z} \right) \ dz + |D^{c} \ell^{k}|(J) \right) \\
&=  \int_{J} \mathcal{H}^{n-2} ( (\partial^{*} F_{\ell})_{z} \ dz + |D^{c} \ell|(J)   \\
&= P (F_\ell ; J \times \mathbb{R}^{n-1}),
\end{align*}
which proves \eqref{iii_Cantor_step1}.
\vspace{.3cm}

\noindent
\textbf{Step 2:} For $k \in \mathbb{N},$ we will construct a $\ell^{k}$-distributed set $E^k$ satisfying 
\[
P (E^{k} ; J \times \mathbb{R}^{n-1}) =  P (F_{\ell^{k}} ; J \times \mathbb{R}^{n-1}).
\]

As a consequence of  \eqref{ii_Cantor_step1} in Step 1, for $k \in \mathbb{N}$ we infer that $D r_{\ell}^{k} = D^{j} r_{\ell}^{k}$ and that the jump set of $r_{\ell^{k}}$ is a finite set. In particular, 
\[
D r_{\ell}^{k} = \sum_{i=1}^{N_{k}} \left( r_{\ell} (z_{i}^{k}) -r_{\ell}( z_{i-1}^{k}) \right) \delta_{z_{i}^{k}},
\]
where, for each $i \in \{1,2, \dots, N_{k} \}$,  $\delta_{z_{i}^{k}}$ denotes the Dirac delta measure concentrated at the point  $z_{i}^{k}$.  Let us now fix $\lambda \in (0,1)$ and we define iteratively the family of sets $\{ E_{i}^{k} \}_{i=1}^{N_{k}} \subset J \times \mathbb{R}^{n-1}$ as 

\begin{align*}
E_{1}^{k}&:= \left[ F_{\ell^{k}} \cap \left( \{ z< z_{1}^{k} \} \backslash \overline{ \{ z < a \} } \right) \right] \cup \left[ \lambda (r_{\ell} (z_{1}^{k}) - r_{\ell} (a) )e + \left( F_{\ell^{k}} \cap \left( \{z < b \} \backslash \{z < z_{1}^{k}\} \right) \right) \right] \\
E_{2}^{k}&:= \left[ E_{1}^{k} \cap  \{ z< z_{2}^{k} \}  \right] \cup \left[ \lambda (r_{\ell} (z_{2}^{k}) - r_{\ell} (z_{1}^{k}) )e + \left( E_{1}^{k}  \backslash \{z < z_{2}^{k}\} \right) \right] \\
& \ \  \ \vdots \\
E_{N_{k}}^{k} &:= \left[ E_{N_{k}-1}^{k} \cap  \{ z< z_{N_{k}}^{k} \}  \right] \cup \left[ \lambda (r_{\ell} (z_{N_{k}}^{k}) - r_{\ell} (z_{N_{k}-1}^{k}) )e + \left( E_{N_{k}-1}^{k}  \backslash \{z < z_{N_{k}}^{k}\} \right) \right]
\end{align*}

Applying Proposition \ref{proposition interval} for each $i \in \{ 1, \dots, N_{k} \},$ we infer that 

\begin{align*}
P(E_{1}^{k} ;  J \times \mathbb{R}^{n-1}) =P(E_{2}^{k} ; J \times \mathbb{R}^{n-1})= \cdots &= P(E_{N_{k}}^{k} ; J \times \mathbb{R}^{n-1})\\
&= P (F_{\ell^{k}} ; J \times \mathbb{R}^{n-1}).
\end{align*}

Note now, that for $i\in \{ 1,2, \cdots, N_{k} \}$ the general term of the above family of sets can be written as 
\begin{align*}
 \label{eqEk}
E_{i}^{k}&= \left[F_{\ell^{k}} \cap  \{ z< z_{1}^{k} \}  \backslash \overline{\{ z< a  \}}  \right]  \cup \left[ \lambda (r_{\ell} (z_{1}^{k}) - r_{\ell} (a) )e + \left( F_{\ell^{k}} \cap \left(  \{ z< z_{2}^{k} \} \backslash \{ z < z_{1}^{k} \} 
\right) \right) \right] \\
& \ \  \cup \left[ \lambda (r_{\ell} (z_{2}^{k}) - r_{\ell} (a))e + \left( F_{\ell^{k}} \cap \left(  \{ z< z_{3}^{k} \} \backslash \{ z < z_{2}^{k} \} 
\right) \right) \right]\\
&\ \  \cup \cdots \cup 
 \left[ \lambda (r_{\ell} (z_{N_{k}}^{k}) - r_{\ell} (a))e + \left( F_{\ell^{k}} \cap \left(  \{ z< b \} \backslash \{ z < z_{N_{k}}^{k} \}
\right) \right) \right].
\end{align*}

Therefore, if we set

\begin{equation} \label{set_Ek}
E^{k}:= E_{N_{k}}^{k} = \left\{ (z,w) \in J \times \mathbb{R}^{n-1} : | w - \lambda (r_{\ell^{k}}  (z) - r_{\ell^{k}} (a))e | < r_{\ell} (z) \right\},
\end{equation}
we conclude that
\[
P(E^{k}; J \times \mathbb{R}^{n-1}) =P (F_{\ell^{k}} ;  J \times \mathbb{R}^{n-1}), \quad \mbox{ for every } k \in \mathbb{N}.
\]

\vspace{.3cm}

	
	\tdplotsetmaincoords{85}{113}
	
	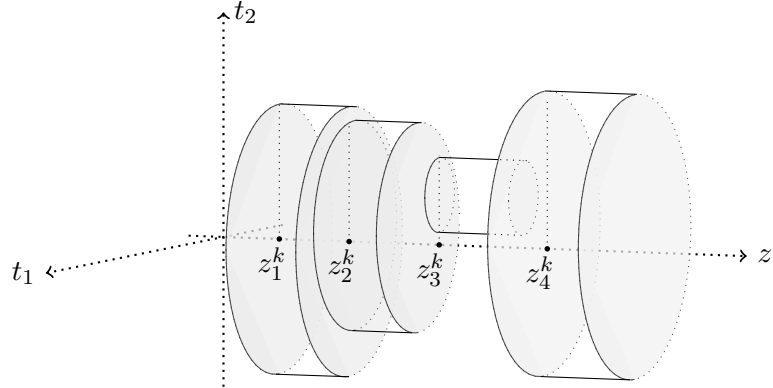
\begin{figure}[h]
		\centering
		\resizebox{0.7\textwidth}{!}
		{
			
			\begin{tikzpicture}
				%initial settings
				[tdplot_main_coords,
				cube/.style={very thin,black},
				grid/.style={very thin,gray },
				axis/.style={->,black,thick, dotted}]
				%------------------
				
				% draw and label 3d axes
				
				\draw[axis] (0,-0.5,0) -- (0,7.5,0) node[anchor=west]{$z$};
				\draw[axis] (-2,0,0) -- (6,0,0)   node[anchor=east]{$t_{1}$};
				\draw[axis] (0,0,-2) -- (0,0,3) node[anchor=west]{$t_{2}$};
				%-------------------
				
			%-------------------- Z1 ---------------------------------------------------------------------------------------	
				% z1: draw first circle and the nodes
				\begin{scope}[canvas is xz plane at y=0.8]

					\draw [dotted,fill=gray!15, opacity=0.7] (1.5+0.3,0) arc[x radius=1.5+0.3, y radius=1.5+0.3, start angle=0, end angle=180];
					
					\draw [dotted,fill=gray!15, opacity=0.7] (1.5+0.3,0) arc[x radius=1.5+0.3, y radius=1.5+0.3, start angle=0, end angle=-180];

					\draw[fill=gray!15, opacity=0.7](1.5+0.3,0) arc[x radius=1.5+0.3, y radius=1.5+0.3, start angle=0, end angle=90];
					
					\draw[fill=gray!15, opacity=0.7] (1.5+0.3,0) arc[x radius=1.5+0.3, y radius=1.5+0.3, start angle=0, end angle=-90];

					\node at (0.25,-0.3) {$z_{1}^{k}$}; % node for z1^{k}
					\draw  node[fill,circle,scale=0.2]{} (0,0); % node for the dot of z1^{k}
					\draw[dotted] (0,0)--(0,1.5+0.3); % the dotted line of radius of z1^{k}

				\end{scope}
				%-------------------

				% z1: second circle
				\begin{scope}[canvas is xz plane at y=1.8]
					\draw [dotted,fill=gray!15, opacity=0.7] (1.5+0.3,0) arc[x radius=1.5+0.3, y radius=1.5+0.3, start angle=0, end angle=180];
					
					\draw [dotted,fill=gray!15, opacity=0.7] (1.5+0.3,0) arc[x radius=1.5+0.3, y radius=1.5+0.3, start angle=0, end angle=-180];
					
					\draw[fill=gray!15, opacity=0.7](1.5+0.3,0) arc[x radius=1.5+0.3, y radius=1.5+0.3, start angle=0, end angle=90];
					
					\draw[fill=gray!15, opacity=0.7] (1.5+0.3,0) arc[x radius=1.5+0.3, y radius=1.5+0.3, start angle=0, end angle=-90];

				\end{scope}
				
				%-------------------

				% z1: connections between circles
				
				\draw[-](0,0.8,1.504+0.3)-- (0,1.81+0.1,1.504+0.3);
				
				\draw[-](0,0.75,-1.507-0.3)-- (0,1.695+0.1,-1.505-0.3);
				%-------------------
				
				%-------------------- Z2 ---------------------------------------------------------------------------------------
				
				% z2: first circle  and the nodes

				\begin{scope}[canvas is xz plane at y=1.8 ]
					\draw [dotted,fill=gray!15, opacity=0.7] (1.2,0.2) arc[x radius=1.1+0.3, y radius=1.1+0.3, start angle=0, end angle=180];
					
					\draw [dotted,fill=gray!15, opacity=0.7] (1.2,0.2) arc[x radius=1.1+0.3, y radius=1.1+0.3, start angle=0, end angle=-180];
					
					\draw[fill=gray!15, opacity=0.7](1.2,0.2) arc[x radius=1.1+0.3, y radius=1.1+0.3, start angle=0, end angle=90];
					
					\draw[fill=gray!15, opacity=0.7] (1.2,0.2) arc[x radius=1.1+0.3, y radius=1.1+0.3, start angle=0, end angle=-90];

					\node at (0.25,-0.3) {$z_{2}^{k}$}; % node for the z2^{k}
				
				    \draw  node[fill,circle,scale=0.2]{} (0,0); % node for the dot of z2^{k}
				
				    \draw[dotted] (0,0)--(0,1.5+0.1); % line for radius for z2^{k}

				\end{scope}
			
				%-------------------

				% z2: second circle
				\begin{scope}[canvas is xz plane at y=2.7]
					
					\draw [dotted,fill=gray!15, opacity=0.7] (1.2,0.2) arc[x radius=1.1+0.3, y radius=1.1+0.3, start angle=0, end angle=180];
					
					\draw [dotted,fill=gray!15, opacity=0.7] (1.2,0.2) arc[x radius=1.1+0.3, y radius=1.1+0.3, start angle=0, end angle=-180];
					
					\draw[fill=gray!15, opacity=0.7](1.2,0.2) arc[x radius=1.1+0.3, y radius=1.1+0.3, start angle=0, end angle=90];
					
					\draw[fill=gray!15, opacity=0.7] (1.2,0.2) arc[x radius=1.1+0.3, y radius=1.1+0.3, start angle=0, end angle=-90];

				\end{scope}

				%-------------------
				
				%z2: connections between circles 
				\draw[-](2.5,2.95,1.86)-- (2.5,3.72+0.18,1.86);
				\draw[-](2.5,2.91,-0.95)-- (2.5,3.72+0.04,-0.95);
				
				%-------------------	    

					%-------------------- Z3---------------------------------------------------------------------------------------
				% z3 : first circle 
				\begin{scope}[canvas is xz plane at y=2.8]
					
					\draw[dotted,fill=gray!10, opacity=0.7](-0.2,0.6) arc[x radius=0.5, y radius=0.5, start angle=0, end angle=180];
					
					\draw[dotted,fill=gray!10, opacity=0.7](-0.2,0.6) arc[x radius=0.5, y radius=0.5, start angle=0, end angle=-180];
					
					\draw[fill=gray!10, opacity=0.7 ](-0.2,0.6) arc[x radius=0.5, y radius=0.5, start angle=0, end angle=90];
					
					\draw[fill=gray!10,opacity=0.7](-0.2,0.6) arc[x radius=0.5, y radius=0.5, start angle=0, end angle=-90];

				\end{scope}   
				%-------------------
				
				% z3: sencond circle -> see z4

				\begin{scope}[canvas is xz plane at y=3.091]
					
					\draw[dotted] (0,0)--(0,1.19);
					
					\node at (0.3,-.3) {$z_{3}^{k}$};
					
					\draw  node[fill,circle,scale=0.2]{} (0,0);
					
				\end{scope}
				
				%-------------------
			    
			    % z3: connections between circles
				% low connections
				\draw[](3.5,4.58,1.5)-- (3.5,5.42,1.5);
				
				% high conenection
				\draw[-](3.5,4.58,0.5)-- (3.5,5.262,0.5);

    			%
				
				%-------------------- Z4 ---------------------------------------------------------------------------------------
				% z4: first circle
				`       \begin{scope}[canvas is xz plane at y=3.7]
					
					\draw[dotted,fill=gray!10, opacity=0.7](-0.2,0) arc[x radius=0.9+1, y radius=0.9+1, start angle=0, end angle=180];
					\draw[dotted,fill=gray!10, opacity=0.7](-0.2,0) arc[x radius=0.9+1, y radius=0.9+1 , start angle=0, end angle=-180];
					\draw[fill=gray!10, opacity=0.7 ](-0.2,0) arc[x radius=0.9+1, y radius=0.9+1, start angle=0, end angle=90];
					\draw[fill=gray!10, opacity=0.7](-0.2,0) arc[x radius=0.9+1, y radius=0.9+1, start angle=0, end angle=-90];

				\end{scope}
				%-------------------

				% z3: second circle- it's there in order to be proper printed.
				\begin{scope}[canvas is xz plane at y=4]
					
					\draw[dotted,fill=gray!10, opacity=0.7](-0.2,0.6) arc[x radius=0.5, y radius=0.5, start angle=0, end angle=180];
					
					\draw[dotted,fill=gray!10, opacity=0.7](-0.2,0.6) arc[x radius=0.5, y radius=0.5, start angle=0, end angle=-180];

				\end{scope}  
				
				%dotted connections of z3
				\draw[dotted](3.5,5.28,0.5)--(3.5,5.74,0.5);
    			\draw[dotted](3.5,5.45,1.5)--(3.5,5.85,1.5);

				% z4: second circle
				
				\begin{scope}[canvas is xz plane at y=5]
					\draw[dotted,fill=gray!10, opacity=0.7](-0.2,0) arc[x radius=0.9+1, y radius=0.9+1, start angle=0, end angle=180];
					\draw[dotted,fill=gray!10, opacity=0.7](-0.2,0) arc[x radius=0.9+1, y radius=0.9+1, start angle=0, end angle=-180];
					\draw[fill=gray!10, opacity=0.7 ](-0.2,0) arc[x radius=0.9+1, y radius=0.9+1, start angle=0, end angle=90];
					\draw[fill=gray!10, opacity=0.7](-0.2,0) arc[x radius=0.9+1, y radius=0.9+1, start angle=0, end angle=-90];
					
				\end{scope}
				
				%-------------------
				
					\begin{scope}[canvas is xz plane at y=4.64]
					
					\draw[dotted] (0,0)--(0,2);
					
					\node at (0.25,-0.3) {$z_{4}^{k}$};
					
					\draw  node[fill,circle,scale=0.2]{} (0,0);
					
				\end{scope}

					% z4: connections between circles
				\draw[-](2.3,5.58,1.1+0.45+0.775)--(2.3,6.9,1.11+0.45+0.765);
				\draw[-](2.3,5.53,-1.11-0.38)--(2.3,6.8,-1.11-0.38);
				%-------------------
	
			\end{tikzpicture}

		}
		
		\caption{A graphical illustration of the set $E_{N_{k}}^{k}$ in Step 2.} 
		\label{cantorian step fig_label}
		
	\end{figure}
	

\vspace{.3cm}

\noindent
\textbf{Step 3:} We claim now, that 
\[
E^{k} \longrightarrow \widetilde{E} \quad \mbox{ in }  J \times \mathbb{R}^{n-1}
\]
for some $\ell$-distributed set $\widetilde{E}$ satisfying
\[
P( \widetilde{E} ; J \times \mathbb{R}^{n-1}) = P( F_{\ell}; J \times \mathbb{R}^{n-1}).
\]

Indeed, thanks to \eqref{i_Cantor_step1} of Step 1, it turns out that
\[
r_{\ell}^{k} (z) \longrightarrow r_{\ell} (z) \quad \mbox{ for } \mathcal{H}^{1}\mbox{-a.e. } z \in J.
\]
As a result, recalling \eqref{set_Ek} and  if $\widetilde{E}$ is defined as 
\begin{equation} \label{set_bar_Ek}
\widetilde{E}:=\left\{ (z,w) \in J \times \mathbb{R}^{n-1}: |w - \lambda( r_{\ell}(z) - r_{\ell}(a))e | < r_{\ell}(z) \right\},
\end{equation}
it follows that $\widetilde{E}$ is $\ell$-distributed and $E^{k} \longrightarrow \widetilde{E}$ in $J \times \mathbb{R}^{n-1}.$

Finally, by Step 2, lower semicontinuity of perimeter with respect to $L^{1}$ convergence (see e.g. \cite[Theorem~ 12.15]{maggi2012sets}) and perimeter inequality \eqref{perimeter inequality}, we obtain
\begin{align*}
P(F_{\ell}; J \times \mathbb{R}^{n-1}) &\leq P (\widetilde{E}  ; J \times \mathbb{R}^{n-1}) \leq \liminf_{k \rightarrow \infty} P( E^{k}; J \times \mathbb{R}^{n-1} ) \\
&= \liminf_{k \rightarrow \infty} P( F_{\ell^{k}}; J \times \mathbb{R}^{n-1}) = \lim_{k \rightarrow \infty} P (F_{\ell^{k}}; J \times \mathbb{R}^{n-1})\\
&= P (F_{\ell}; J \times \mathbb{R}^{n-1}), 
\end{align*}

and thus, 
\[
P(\widetilde{E} ; J \times \mathbb{R}^{n-1}) = P( F_{\ell}; J \times \mathbb{R}^{n-1}).
\]

\vspace{.3cm}

\noindent
\textbf{Step 4:} Now consider the set $E$ defined in \eqref{counterset_E}. By previous steps, it turns out that $E$ is $\ell$-distributed, and furthermore

\[
E =_{\mathcal{H}^{n}} \left( F_{\ell} \cap \{ z <a \} \right) \cup \left[ \widetilde{E} \cap \left( \{ z< b \} \backslash \{z <a \} \right)\right] \cup \left[ \lambda ( r_{\ell}(b) - r_{\ell} (a) )e + \left( F_{\ell} \backslash \{ z<b \} \right)
\right].
\]

Since $J \times \mathbb{R}^{n-1} =(a,b) \times \mathbb{R}^{n-1} = \{ z <b \} \backslash \overline{ \{z <a \}}$ and using similar argument as in the proof of Proposition \ref{proposition interval}, we have 

\begin{align*}
P(E)&= P (E ; \{z <a \}) + P(E ; \{z=a\} ) + P(E; \{z<b\} \backslash \overline{ \{z <a \} })\\
&  \quad+ P (E ; \{ z=b\} ) + P( E ; \{z>b\}) \\
&= P (F_{\ell} ; \{ z< a\} ) + P (E ; \{z=a\} )  + P (\widetilde{E} ;\{z<b\} \backslash \overline{ \{z <a \}}) \\
&\quad  + P (E ; \{ z=b\} ) + P( F_{\ell} ; \{z>b\})\\
&= P (F_{\ell} ; \{ z< a\} ) + P (E ; \{z=a\} )  + P (F_{\ell} ;\{z<b\} \backslash \overline{ \{z <a \}}) \\
&  \quad + P (E ; \{ z=b\} ) + P( F_{\ell} ;  \{z>b\}), \\
\end{align*}
where  Step 3 has been employed.

In addition,  an analogous argument as in Step 1 of the proof of Proposition \ref{propostion no jumps} shows that
\[
P ( E; \{ z=a\} ) = P (E; \{z=b\} ) =0.
\]

As a consequence, we infer 

\begin{align*}
P(E)&= P (F_{\ell} ; \{z < a\} ) + P(F_{\ell}; \{z < b\} \backslash  \overline{ \{z<a\} } ) + P( F_{\ell} ;  \{z>b\} )\\
&= P(F_{\ell}).
\end{align*}

Therefore, $E \in K ( \ell )$. In the light of this, the proof is completed.
\end{proof}
We can now show the implication $(i) \Longrightarrow (ii)$ of Theorem \ref{complete characterisation}.
 \begin{proof}[Proof of Theorem 1.2: $(i) \Longrightarrow (ii)$] Assume that $E \in \mathcal{K}(\ell)$. Then, in order to show the result, it is enough to combine  Proposition \ref{proposition interval}, Proposition \ref{propostion no jumps} and Proposition \ref{proposition_cantorian}.
\end{proof}

\section*{Acknowledgments}
The author wishes to  express his gratitude to his PhD advisor  Filippo Cagnetti (University of Sussex), for introducing the problem and for many stimulating discussions and enlighting suggestions on early versions of the manuscript, as well as to  Matteo Perugini (Universit\`a  degli Studi di Milano) for many fruitful discussions. The author was supported by the EPSRC scholarship under the grant EP/R513362/1 \textit{Free-Discontinuity Problems and Perimeter Inequalities under Symmetrisation}.
\bibliographystyle{siam} 
\bibliography{codim_references}
\end{document}